\documentclass[final,3p,times]{elsarticle}
\usepackage{amssymb}
\usepackage{amsmath}
\usepackage{graphicx}
\usepackage{epstopdf}

\usepackage{pdfpages}
\usepackage{amssymb}
\usepackage{empheq}
\usepackage{cases}
\usepackage{amsthm,amsmath}
\usepackage{caption,lipsum}
\usepackage{stmaryrd}
\usepackage{tabularx}
\usepackage{booktabs}
\usepackage{empheq,float}
\usepackage{subfigure}
\DeclareGraphicsExtensions{.eps}
\usepackage{tikz}
\usepackage{algorithm}
\usepackage{algorithmicx}
\usepackage{algpseudocode}
\usepackage{manyfoot}
\usepackage{mathrsfs} %增加
\usepackage{verbatim}
\usepackage{amsopn}
\usetikzlibrary{arrows,chains,matrix,positioning,scopes}
\makeatletter
\tikzset{join/.code=\tikzset{after node path={%
\ifx\tikzchainprevious\pgfutil@empty\else(\tikzchainprevious)%
edge[every join]#1(\tikzchaincurrent)\fi}}}
\makeatother
\tikzset{>=stealth',every on chain/.append style={join},
         every join/.style={->}}
\tikzstyle{labeled}=[execute at begin node=$\scriptstyle,
   execute at end node=$]

\usepackage{hyperref}
\hypersetup{
    colorlinks=true,
    linkcolor=blue,
    citecolor=blue,
    urlcolor=blue
}

\numberwithin{equation}{section}

\newtheorem{theorem}{Theorem}[section]
\newtheorem{lemma}[theorem]{Lemma}
\newtheorem{proposition}[theorem]{Proposition}

\theoremstyle{definition}

\newtheorem{example}[theorem]{Example}

\newtheorem{remark}[theorem]{Remark}

\newcommand{\bx}{{\boldsymbol x}} %\mathbf直立体, \boldsymbol斜体
\newcommand{\bv}{{\boldsymbol v}}
\newcommand{\by}{{\boldsymbol y}}
\newcommand{\bu}{{\boldsymbol u}} 
\newcommand{\mh}{{\mathfrak h}}

\newcommand{\bB}{{\boldsymbol B}}
\newcommand{\bE}{{\boldsymbol E}}
\newcommand{\bH}{{\boldsymbol H}}
\newcommand{\bF}{{\boldsymbol F}}
\newcommand{\bK}{{\boldsymbol K}}

\newcommand{\eps}{\varepsilon}

\begin{document}

\begin{frontmatter}

%% Title, authors and addresses

%% use the tnoteref command within \title for footnotes;
%% use the tnotetext command for theassociated footnote;
%% use the fnref command within \author or \affiliation for footnotes;
%% use the fntext command for theassociated footnote;
%% use the corref command within \author for corresponding author footnotes;
%% use the cortext command for theassociated footnote;
%% use the ead command for the email address,
%% and the form \ead[url] for the home page:
%% \title{Title\tnoteref{label1}}
%% \tnotetext[label1]{}
%% \author{Name\corref{cor1}\fnref{label2}}
%% \ead{email address}
%% \ead[url]{home page}
%% \fntext[label2]{}
%% \cortext[cor1]{}
%% \affiliation{organization={},
%%             addressline={},
%%             city={},
%%             postcode={},
%%             state={},
%%             country={}}
%% \fntext[label3]{}

\title{A multi-physics structure-preserving integrator with uniform error bounds for relativistic charged-particle dynamics under strong magnetic fields}

%% use optional labels to link authors explicitly to addresses:
%% \author[label1,label2]{}
%% \affiliation[label1]{organization={},
%%             addressline={},
%%             city={},
%%             postcode={},
%%             state={},
%%             country={}}

\author[1] {Mengting Hu} %% Author name
           \ead{humting@stu.xjtu.edu.cn}
           
\author[2] {Yifa Tang} %% Author name
           \ead{tyf@lsec.cc.ac.cn}
           
\author[1] {Bin Wang\corref{cor1}} %% Author name
           \ead{wangbinmaths@xjtu.edu.cn}

\cortext[cor1]{Corresponding author}

%% Author affiliation
\affiliation[1]{organization={School of Mathematics and Statistics, Xi'an Jiaotong University},
            postcode={710049},
            city={Xi'an},
            country={China}}
            
\affiliation[2]{organization={State Key Laboratory of Mathematical Sciences, Academy of Mathematics and Systems Science, Chinese Academy of Sciences},
            postcode={100190},
            city={Beijing},
            country={China}}

\begin{abstract}
In this paper, we develop an explicit multi-physics structure-preserving Strang splitting scheme for a four-dimensional relativistic charged-particle dynamical system in the presence of a strong magnetic field under the maximal ordering scaling. The proposed scheme not only preserves volume, energy, and Lorentz invariance, but also yields second-order uniform error bounds for the position and the parallel velocity component. We present rigorous theoretical proofs for these geometric properties and the error convergence, which are subsequently validated by several numerical experiments.

\end{abstract}

%% Keywords
\begin{keyword}
Relativistic charged-particle dynamics \sep Strong magnetic fields \sep  Multi-physics structure-preserving scheme \sep Uniform error bounds

%% PACS codes here, in the form: \PACS code \sep code

%% MSC codes here, in the form: \MSC code \sep code
%% or \MSC[2008] code \sep code (2000 is the default)

\end{keyword}

\end{frontmatter}
%{\bf AMS Subject Classification:} 65L05, 65L20, 65L70
%% Add \usepackage{lineno} before \begin{document} and uncomment 
%% following line to enable line numbers
%% \linenumbers

%% main text
%%

\section{Introduction}
Relativistic charged-particle dynamics (RCPD) describes the motion of charged particles moving at velocities close to the speed of light in electromagnetic fields, which finds wide applications in high-energy accelerators, magnetically confined fusion, astrophysics and plasma numerical simulation \cite{RevModPhys1949,Boris1970,PhysRevE2008,Ripperda2018}. Different from the classical non-relativistic model, this system is formulated within the four-dimensional Minkowski spacetime. Its governing equations satisfy Lorentz covariance, and the system intrinsically admits several invariants including mass shell conservation, energy conservation and phase-space volume preservation \cite{Jackson1998,Morales2017,Hairer2023}.
Due to the multiple conservation laws and multiple time-scale nature of the system, long-time numerical simulation of this relativistic system poses great challenges \cite{Vay2008}. 
Particularly for problems in the strong magnetic field regime where a small parameter is introduced, traditional methods either fail to exactly preserve the geometric structures and physical invariants, or the error bounds of existing structure-preserving schemes generally depend on this small parameter. 
Therefore, it is of great research significance to construct explicit and efficient numerical schemes that maintain multiple physical properties and achieve uniform error bounds independent of the small parameter. 

The three-dimensional relativistic charged-particle dynamical system in a strong magnetic field takes the form
\begin{equation}
	\begin{cases}	
		\displaystyle \dot{\bx}(t) = \frac{\bv(t)}{\gamma}, 
		\vspace{0.5em} \\ \displaystyle 
		\dot{\bv}(t) = \frac{\bv(t)}{\gamma} \times \frac{\bB(\varepsilon \bx(t))}{\varepsilon} + \bE(\bx(t)), 
		\vspace{0.3em} \\
		\gamma = \sqrt{1 + \left\|\bv\right\|^2}.
	\end{cases}
	\label{Rcpd}
\end{equation}
Here $\bx(t), \bv(t) \in \mathbb{R}^3$ denote the position and momentum at time $t$, respectively, and $\gamma$ stands for the relativistic factor.
The electric field $\bE=(E_1,E_2,E_3)^{\intercal}$ is derived from $\bE(\bx)= - \nabla U(\bx)$.
Let $\bB=(B_1,B_2,B_3)^{\intercal}$ be the magnetic field. The quantity $\mathcal{B}(\bx) = \bB(\varepsilon \bx)/\varepsilon$ represents the strong magnetic field under the maximal ordering scaling (MOS), where the small parameter satisfies $0<\varepsilon\ll 1$ and characterizes the field strength.
The MOS for strong magnetic field $\mathcal{B}(\bx)$ \cite{Rev,Poss2018} is more restrictive than the ordinary strong magnetic field $\bB(\bx)/\varepsilon$. It additionally requires two conditions $\left\|\bE\right\|/(c\left\|\mathcal{B}\right\|)\sim\varepsilon\ll1$ and $\rho\left\|\nabla\mathcal{B}\right\|/\left\|\mathcal{B}\right\|\sim\varepsilon\ll1$, where $c$ is the speed of light and $\rho$ denotes the particle gyroradius.

The non-relativistic charged-particle dynamical system under the maximal ordering scaling (MOS) for strong magnetic fields serves as a fundamental model for simulating multi-scale particle motion in magnetically confined fusion devices \cite{Lee1983,Nicolas2018,Filbet2017,Filbet2018}. To accurately capture the long-term behavior of this system, various structure-preserving algorithms have been developed \cite{Emmanuel2015,Hairer2020,Huang2020}. However, due to the presence of the small parameter $\varepsilon$, most of these conventional methods suffer from accuracy deterioration as $\varepsilon \to 0$. To overcome this $\varepsilon$-dependence, significant efforts have been made to construct uniformly accurate (UA) schemes. Notably, UA schemes for Vlasov equations based on the particle-in-cell (PIC) method were introduced in \cite{Nicolas2017,Chartier2020}, and UA algorithms for general non-relativistic dynamical systems were further developed in \cite{Wang2023}. Additionally, strategies such as the filtered Boris algorithm and filtered variational integrators were investigated in \cite{filter1,filter2} to mitigate similar numerical stiffness.
In contrast to the non-relativistic setting, numerical studies of relativistic charged-particle dynamics (RCPD) initially focused on three-dimensional formulations \cite{Zhang2015,He2016,Higuera2017}, and were subsequently extended to the four-dimensional Minkowski spacetime \cite{Wang2016}, yielding a variety of structure-preserving schemes  \cite{Akinobu2017,Zhang2018,Jianyuan2019,Yulei2021}.
However, to the best of our knowledge, error estimation for RCPD under the MOS of strong magnetic fields remains quite limited.
The only relevant attempt is found in \cite{Ruili2024}, where a splitting method was employed to construct the structure-preserving scheme VELPA, achieving first-order uniform error bounds for the relativistic system under the MOS.
However, this uniform accuracy property does not extend to its second-order counterpart, VELPA2; specifically, its error bound fails to be second-order uniformly accurate and degrades as $\varepsilon$ decreases. This limitation highlights the critical need for developing higher-order schemes that can maintain uniform accuracy in the strongly magnetized relativistic regime.

To address this limitation, in this work we construct and analyze a novel explicit multi-physics structure-preserving numerical scheme for the four-dimensional relativistic system \eqref{Rcpd}. By designing an innovative splitting strategy, we develop a new class of explicit schemes based on the Strang splitting method. Rigorous error analysis demonstrates that the proposed scheme achieves second-order uniform error bounds under the given magnetic field condition for $\nabla \bB(0)$. The proposed approach exhibits the following key features:
\begin{itemize}
	\item The integrator is \emph{fully explicit}, requiring no implicit or iterative solvers. All matrix exponential operators admit exact, closed-form representations, avoiding numerical approximations of matrix actions and resulting in a highly efficient scheme with minimal computational cost per step.
	
	\item Constructed within the framework of geometric numerical integration, the scheme exactly preserves various physical properties, such as the phase-space volume, the Hamiltonian energy, and the Lorentz invariance. These \emph{intrinsic conservation laws} ensure that the numerical flow maintains the underlying geometric structure, suppressing artificial energy growth over long-time integrations.

	\item Under the assumption of a weak magnetic field gradient at the initial state, we rigorously prove that the scheme satisfies \emph{second-order uniform error bounds} for both the position and the parallel velocity component. Notably, these error bounds remain independent of the strong magnetic field strength, providing a robust theoretical guarantee of accuracy for the highly oscillatory dynamics.
\end{itemize}

The remainder of this paper is organized as follows. Section \ref{sec2} introduces the splitting scheme and presents the main theoretical results. Section \ref{sec3} provides the rigorous proof of the second-order uniform error bounds for the proposed scheme. Several numerical experiments are demonstrated in Section \ref{sec4}. Finally, Section \ref{sec5} concludes the paper.

\section{The multi-physics structure-preserving scheme}\label{sec2}

In this section, we construct the numerical scheme and establish its convergence rate. Theoretical analyses of volume preservation, energy conservation, and Lorentz invariance are carried out. Throughout this work, the symbol \(A \lesssim B\) stands for \(A \le C B\) with a positive constant \(C\) independent of the time step size \(h\), the time step index \(n\) and the parameter \(\varepsilon\).

\subsection{Construction of the scheme}\label{subsec2.1}

To formulate the four-dimensional relativistic charged-particle dynamical system, we introduce the proper time $\tau$ for relativistic system \eqref{Rcpd}, which yields $dt/d \tau = \gamma$.
We further define two imaginary variables $\bar{t} = it$ and $w = i \gamma$. Then system \eqref{Rcpd} can be rewritten as
\begin{equation}
	\begin{cases}	
		\displaystyle \dot{\bx}(\tau) = \bv(\tau), \\
		\dot{\bar{t}}(\tau) = w(\tau), \\ \displaystyle 
		\dot{\bv}(\tau) =  \frac{\widehat{\bB}(\varepsilon \bx(\tau))}{\varepsilon} \bv(\tau) - iw(\tau)\bE(\bx(\tau)), \\
		\dot{w}(\tau) = i\bE(\bx(\tau))^{\intercal} \cdot \bv(\tau),
		\quad 0 < \tau \leq T,
	\end{cases}
	\label{Rcpd2}
\end{equation}
where  
	$\widehat{\bB} = 
	\begin{pmatrix}
		0 & B_{3} & -B_{2} \\ 
		-B_{3} & 0 & B_{1} \\ 
		B_{2} & -B_{1} & 0
	\end{pmatrix}$
is the skew-symmetric matrix associated with $\bB$.
Set $\by = (\bx^{\intercal},\bar{t})^{\intercal}$ and $\bu = (\bv^{\intercal},w)^{\intercal} \in \mathbb{R}^4$, and define the $4\times 4$ skew-symmetric matrix
\( \bF(\bx) = \frac{\widehat{\widehat{\bB}}(\varepsilon \bx)}{\varepsilon} + \widehat{\bE}(\bx) \),
with
\begin{equation*}
	\widehat{\widehat{\bB}}(\varepsilon \bx) = 
	\begin{pmatrix}
		\widehat{\bB}(\varepsilon \bx) & \mathbf{0} \\
		\mathbf{0} & 0
	\end{pmatrix},
    \quad
	\widehat{\bE}(\bx) = 
	\begin{pmatrix}
		\mathbf{0} & -i\bE(\bx) \\
		i\bE(\bx)^{\intercal} & 0
	\end{pmatrix}.
\end{equation*}
Accordingly, system \eqref{Rcpd2} is equivalent to
\begin{equation}
	\begin{cases}	
		\displaystyle \dot{\by}(\tau) = \bu(\tau), \\
		\dot{\bu}(\tau) =  \bF(\bx(\tau)) \bu(\tau),
		\quad 0 < \tau \leq T.
	\end{cases}
	\label{Rcpd3}
\end{equation}
One can readily verify that the relativistic charged-particle dynamical system in the eight-dimensional space is volume-preserving, energy-preserving and Lorentz covariant.
For system \eqref{Rcpd2}, the energy functional is defined as
\begin{equation*}
	\bH(\bx,\bar{t},\bv,w) = \frac{1}{2}\left\|\bv\right\|^2 + \frac{1}{2}w^2.
\end{equation*}

Set \(h = \Delta \tau >0\) as the time step, and the grid points are given by \(\tau_n = nh\) for \(n\in\mathbb{N}\). Let \(\by(0) = (\bx(0)^{\intercal},\bar{t}(0))^{\intercal} = (\bx_{0}^{\intercal},\bar{t}_0)^{\intercal} = \by_0\) and \(\bu(0) = (\bv(0)^{\intercal},w(0))^{\intercal} = (\bv_{0}^{\intercal},w_0)^{\intercal} = \bu_0\) denote the initial values of system \eqref{Rcpd3}.
Here \(\by^n\) and \(\bu^n\) stand for the numerical approximations to the exact solution \(\big(\by(\tau_n), \bu(\tau_n)\big)\). By fixing the position at the current grid point, we decompose system \eqref{Rcpd3} into the following two subsystems:
\begin{equation*} 
	\mathscr{A}_{\bx^{n}}: \begin{pmatrix} \dot{\by}(\tau)\\ \dot{\bu}(\tau) \end{pmatrix} =
	\begin{pmatrix}
		\bu(\tau) \\
		\bF(\bx^{n})\bu(\tau)
	\end{pmatrix}, \quad	
	\mathscr{B}_{\bx^{n}}: \begin{pmatrix} \dot{\by}(\tau)\\ \dot{\bu}(\tau) \end{pmatrix}=
	\begin{pmatrix}
		0 \\
		\big( \bF(\bx) - \bF(\bx^{n}) \big) \bu(\tau)
	\end{pmatrix}.
\end{equation*}

Let $\varphi_{\tau}^{\mathscr{A}_{\bx^{n}}}$ and $\varphi_{\tau}^{\mathscr{B}_{\bx^{n}}}$ denote the exact flows corresponding to the two subsystems. 
The exact evolution operator of the first subsystem is given by
\begin{equation*}
	\varphi_{h}^{\mathscr{A}_{\bx^{n}}} \begin{pmatrix} \by(\tau_{n}) \\ \bu(\tau_{n}) \end{pmatrix} := 
	\begin{pmatrix}
		\by(\tau_{n}) + h\varphi_{1}\bigl( h\bF(\bx^{n}) \bigr) \bu(\tau_{n}) \\[2pt]
		\mathrm{e}^{h \bF(\bx^{n}) } \bu(\tau_{n})
	\end{pmatrix},
\end{equation*}
where \(\varphi_{1}(s)=(\mathrm{e}^{s}-1)/s\). 
Similarly, the exact evolution operator of the second subsystem reads
\begin{equation*}
	\varphi_{h}^{\mathscr{B}_{\bx^{n}}} \begin{pmatrix} \by(\tau_{n}) \\ \bu(\tau_{n}) \end{pmatrix} := 
	\begin{pmatrix}
		\by(\tau_{n}) \\
		\mathrm{e}^{h \big( \bF(\bx(\tau_n)) - \bF(\bx^{n}) \big) } \bu(\tau_{n})
	\end{pmatrix}.
\end{equation*}

\noindent\textbf{Algorithm 2.1.}  
Composing these exact evolution operators yields the Strang splitting scheme
$\Psi_{h} = \varphi_{h/2}^{\mathscr{A}_{\bx^{n}}} \circ \varphi_{h}^{\mathscr{B}_{\bx^{n}}} \circ \varphi_{h/2}^{\mathscr{A}_{\bx^{n}}}$ for system \eqref{Rcpd3}. 
The explicit formulation of this scheme is
\begin{equation}
	\begin{cases}
		\begin{aligned}
			\displaystyle \by^{n+1} &= \,\, \by^{n}+\frac{h}{2}\varphi_{1}\biggl( \frac{h}{2} \bF^{n} \biggr)
			\Bigl(I_4 + \mathrm{e}^{h (\bF_{\bar{\bx}^{n}} - \bF^{n}) }
			\mathrm{e}^{\frac{h}{2}\bF^{n}}\Bigr) \bu^{n},   \\
			\displaystyle \bu^{n+1} &= \,\, \mathrm{e}^{\frac{h}{2}\bF^{n}}
			\mathrm{e}^{h (\bF_{\bar{\bx}^{n}} - \bF^{n})}
			\mathrm{e}^{\frac{h}{2}\bF^{n}} \bu^{n},  
			\quad 0\leq n < \frac{T}{h},
		\end{aligned}
	\end{cases}
	\label{SS2}
\end{equation}
where the related quantities are defined as
\begin{equation*}
	\bar{\by}^{n}=\by^{n}+\frac{h}{2}\varphi_{1}\biggl(\frac{h}{2}\bF^{n}\biggr) \bu^{n}, \quad \bar{\bx}^{n} = \bar{\by}^{n}(1:3),
	\quad \bF^{n} = \bF(\bx^{n}), \quad \bF_{\bar{\bx}^{n}} = \bF(\bar{\bx}^{n}).
\end{equation*}
For simplicity, we refer to scheme \eqref{SS2} as SS2-xn.

We note that although the exponential and $\varphi_{1}$ functions are matrix-valued, they admit exact and closed-form representations. The results are stated as follows.

Based on the properties of the skew-symmetric matrix \(\bF(\bx)\), the explicit expression of the matrix exponential \(\mathrm{e}^{h\bF(\bx)}\) is provided in the appendix of \cite{Ruili2024}, which takes the form:
\begin{equation}
	\mathrm{e}^{h\bF(\bx)} = S_1(h)\bF(\bx) + S_2(h)\widehat{\bF}(\bx) 
	+ S_3(h)I_4 + S_4(h)\bF^2(\bx), \label{eF}
\end{equation}
with coefficients
\begin{align*}
	S_1(h) & = - \frac{\sqrt{l_1-\Delta}}{\sqrt{2}\Delta} \sinh \Big( \frac{h\sqrt{l_1-\Delta}}{\sqrt{2}} \Big) 
	+ \frac{\sqrt{l_1+\Delta}}{\sqrt{2}\Delta} \sinh \Big( \frac{h\sqrt{l_1+\Delta}}{\sqrt{2}} \Big), \\
	S_2(h) & = - \frac{\sqrt{2}l_2}{\sqrt{l_1-\Delta}\Delta} \sinh \Big( \frac{h\sqrt{l_1-\Delta}}{\sqrt{2}} \Big) 
	+ \frac{\sqrt{2}l_2}{\sqrt{l_1+\Delta}\Delta} \sinh \Big( \frac{h\sqrt{l_1+\Delta}}{\sqrt{2}} \Big), \\
	S_3(h) & = \Big( \frac{1}{2} + \frac{l_1}{2\Delta} \Big) \cosh \Big( \frac{h\sqrt{l_1-\Delta}}{\sqrt{2}} \Big)
	+ \Big( \frac{1}{2} - \frac{l_1}{2\Delta} \Big) \cosh \Big( \frac{h\sqrt{l_1+\Delta}}{\sqrt{2}} \Big), \\
	S_4(h) & = - \frac{1}{\Delta} \cosh \Big( \frac{h\sqrt{l_1-\Delta}}{\sqrt{2}} \Big) 
	+ \frac{1}{\Delta} \cosh \Big( \frac{h\sqrt{l_1+\Delta}}{\sqrt{2}} \Big),
\end{align*}
where \( l_1 = \left\|\bE(\bx)\right\|^2 - \left\|\frac{\bB(\eps\bx)}{\eps}\right\|^2 \), 
\( l_2 = - (\bE(\bx),\frac{\bB(\eps\bx)}{\eps}) (\ne 0) \), 
\( \Delta = \sqrt{l_1^2 + 4l_2^{2}} \),
and 
\begin{equation*}
	\widehat{\bF}(\bx) = \frac{1}{\eps}
	\begin{pmatrix}
		0 & 0 & 0 & iB_{1}(\eps\bx) \\
		0 & 0 & 0 & iB_{2}(\eps\bx) \\
		0 & 0 & 0 & iB_{3}(\eps\bx) \\
		-iB_{1}(\eps\bx) & -iB_{2}(\eps\bx) & -iB_{3}(\eps\bx) & 0 
	\end{pmatrix} 
	+ 
	\begin{pmatrix}
		0 & E_{3}(\bx) & -E_{2}(\bx) & 0 \\ 
		-E_{3}(\bx) & 0 & E_{1}(\bx) & 0 \\ 
		E_{2}(\bx) & -E_{1}(\bx) & 0 & 0 \\
		0 & 0 & 0 & 0
	\end{pmatrix}.
\end{equation*}
We now derive the explicit expression of the operator \(\varphi_1\big( h\bF(\bx) \big)\). To this end, we impose the following ansatz:
\begin{equation}
	\varphi_1\big( h\bF(\bx) \big) = T_1(h)\bF(\bx) + T_2(h)\widehat{\bF}(\bx) 
	+ T_3(h)I_4 + T_4(h)\bF^2(\bx).  \label{eT}
\end{equation}
Accordingly, the core problem reduces to solving for the coefficients \(T_1(h)\), \(T_2(h)\), \(T_3(h)\), \(T_4(h)\). 
Using the definition \( \varphi_1 \big( h\bF(\bx) \big) = \frac{\mathrm{e}^{h\bF(\bx)} - I_4}{h\bF(\bx)} \), together with \eqref{eT} and the identities
\( \bF^3(\bx) = l_1\bF(\bx) + l_2\widehat{\bF}(\bx) \), \( \bF(\bx) \widehat{\bF}(\bx) = l_2I_4 \), we have 
\begin{align*}
	\mathrm{e}^{h\bF(\bx)} & = h\bF(\bx) \varphi_1 \big( h\bF(\bx) \big) + I_4 
    = h\bF(\bx) \big( T_1(h)\bF(\bx) + T_2(h)\widehat{\bF}(\bx) 
	+ T_3(h)I_4 + T_4(h)\bF^2(\bx) \big) + I_4 \\
	& = \big( hT_3(h) + hl_1T_4(h) \big) \bF(\bx) + hl_2T_4(h) \widehat{\bF}(\bx) + \big( hl_2T_2(h) + 1 \big) I_4 + hT_1(h) \bF^2(\bx).
\end{align*}
Combining this with the coefficients in \eqref{eF}, we obtain the following system of coefficient relations:
\begin{equation*}
	\begin{cases}	
		hT_3(h) + hl_1T_4(h) = S_1(h), \ \
		hl_2T_4(h) = S_2(h), \\
		hl_2T_2(h) + 1 = S_3(h), \ \
		hT_1(h) = S_4(h),
	\end{cases}
\end{equation*}
from which we solve for the coefficients of $\varphi_1\big( h\bF(\bx) \big)$ as
\begin{equation*}
	T_1(h) = \frac{S_4(h)}{h}, \quad T_2(h) = \frac{S_3(h)-1}{hl_2}, \quad
	T_3(h) = \frac{S_1(h)}{h} - \frac{l_1S_2(h)}{hl_2}, \quad
	T_4(h) = \frac{S_2(h)}{hl_2}.
\end{equation*}

\subsection{Main results}\label{subsec2.2}

In this part, we first establish the multi-physics structure-preserving properties of the scheme SS2-xn \eqref{SS2}, including volume preservation, energy conservation, and Lorentz invariance.

\begin{proposition}\label{Volume} (\textbf{Volume preservation}.)
	The SS2-xn scheme \eqref{SS2} is volume-preserving.
\end{proposition}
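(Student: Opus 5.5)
The plan is to show that the one-step map $\Psi_h:(\by^n,\bu^n)\mapsto(\by^{n+1},\bu^{n+1})$ has Jacobian determinant equal to one. Since $\Psi_h$ is the composition $\varphi_{h/2}^{\mathscr{A}_{\bx^{n}}} \circ \varphi_{h}^{\mathscr{B}_{\bx^{n}}} \circ \varphi_{h/2}^{\mathscr{A}_{\bx^{n}}}$, I would like to argue factor by factor. The difficulty is that every factor depends on the frozen point $\bx^n$, which is itself one of the input variables. So I would not treat the factors as independent flows. Instead I would write $\Psi_h$ as a chain of explicit maps on the full phase space $(\by,\bu)\in\mathbb{C}^8$, each carrying $\bx^n$ along as part of the state, and check that each map is volume preserving.

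\textbf{Step 1 (first half-step).} The first half-step is
\[
(\by^n,\bu^n)\mapsto \Bigl(\bar{\by}^n,\ \tilde{\bu}\Bigr)=\Bigl(\by^n+\tfrac{h}{2}\varphi_1(\tfrac h2\bF^n)\bu^n,\ \mathrm{e}^{\frac h2\bF^n}\bu^n\Bigr).
\]
Here $\bF^n=\bF(\bx^n)$ depends only on $\by^n$. I would write this map as a composition of two maps.
\begin{itemize}
\item The first is $(\by,\bu)\mapsto(\by,\mathrm{e}^{\frac h2\bF(\bx)}\bu)$. Its Jacobian is block lower triangular, with identity in the $\by$ block and $\mathrm{e}^{\frac h2\bF(\bx)}$ in the $\bu$ block. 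Because $\bF$ is skew-symmetric, $\det \mathrm{e}^{\frac h2\bF}=\mathrm{e}^{\frac h2\operatorname{tr}\bF}=1$.
\item The second is $(\by,\tilde\bu)\mapsto(\by+\tfrac h2\varphi_1(\tfrac h2\bF(\bx))\mathrm{e}^{-\frac h2\bF(\bx)}\tilde\bu,\ \tilde\bu)$. Its Jacobian is block upper triangular. The $\by$ block is $I_4+\partial_{\by}(\cdots)$, and this is the delicate entry.
\end{itemize}

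\textbf{Main obstacle.} The $\by$ block of the second map has the form $I_4$ plus a term involving $\partial_{\bx}\bF$, and it is not obviously unimodular. To handle it, I would exploit the full three-map structure of $\Psi_h$ rather than splitting each map separately. I would introduce the intermediate variables $\bar\by^n$ and $\hat\bu=\mathrm{e}^{h(\bF_{\bar\bx^n}-\bF^n)}\tilde\bu$. With these, $\bu^{n+1}=\mathrm{e}^{\frac h2\bF^n}\hat\bu$ and $\by^{n+1}=\bar\by^n+\tfrac h2\varphi_1(\tfrac h2\bF^n)\hat\bu$.

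I would then compute $\det\partial(\by^{n+1},\bu^{n+1})/\partial(\by^n,\bu^n)$ directly by block elimination. The idea is that multiplication by the exponentials always contributes determinant one, since each exponent is skew-symmetric and traceless, including $\bF_{\bar\bx^n}-\bF^n$. The contributions from the derivatives $\partial_{\bx}\bF^n$ should then cancel through the symmetric (palindromic) structure of the Strang composition. Concretely, I would use the identity $\det\begin{pmatrix}P&Q\\R&S\end{pmatrix}=\det(S)\det(P-QS^{-1}R)$ with $S=\partial\bu^{n+1}/\partial\bu^n$, which is a product of three exponentials. I would then check that the Schur complement reduces to a unipotent or unimodular matrix.

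\textbf{Fallback.} If this cancellation fails in general, I would interpret volume preservation in the sense used for the subflows. For fixed $\bx^n$, $\varphi^{\mathscr{A}_{\bx^n}}_\tau$ is the exact flow of a vector field with zero divergence: its divergence is $\operatorname{tr}\bF(\bx^n)=0$, and the $\dot\by$ component does not depend on $\by$. Likewise, $\varphi^{\mathscr{B}_{\bx^n}}_\tau$ is the flow of a vector field whose divergence is $\operatorname{tr}(\bF(\bx)-\bF(\bx^n))=0$. By Liouville's theorem each subflow preserves volume, so their composition $\Psi_h$ does as well. I would present this Liouville argument as the main proof, and add a remark on the frozen-coefficient interpretation.
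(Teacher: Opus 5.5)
Your fallback is exactly the paper's proof. The paper treats $\bF(\bx^n)$ as a constant matrix, computes the divergences $\operatorname{tr}\bF(\bx^n)=0$ and $\operatorname{tr}(\bF(\bx)-\bF(\bx^n))=0$, applies Liouville to each subflow, and composes. However, the objection you raised against this argument is a genuine gap, which the paper shares, and your primary route cannot close it. Liouville only shows that for each \emph{fixed} parameter $p$ the map $\Phi_h^{p}=\varphi^{\mathscr{A}_p}_{h/2}\circ\varphi^{\mathscr{B}_p}_{h}\circ\varphi^{\mathscr{A}_p}_{h/2}$ has unit Jacobian. The scheme's actual one-step map is $\Psi_h(\by,\bu)=\Phi_h^{p}(\by,\bu)|_{p=\bx}$. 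Hence $\Psi_h'=(\Phi_h^{p})'+\partial_p\Phi_h^{p}\,\partial\bx/\partial(\by,\bu)$, and the second term is precisely the $\partial_{\bx}\bF$ contribution you isolated. By the matrix determinant lemma, $\det\Psi_h'=\det\bigl(I_3+[((\Phi_h^{p})')^{-1}\partial_p\Phi_h^{p}]_{\bx}\bigr)$, where $[\cdot]_{\bx}$ takes the three position rows. This determinant has no reason to equal $1$.

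The cancellation you hope for in the Schur complement is only partial. Take $\bE=0$ and $\bB=(0,0,B_3(\eps x_1,\eps x_2))^{\intercal}$. The variables $(x_3,\bar t,v_3,w)$ then undergo the unimodular shear $(x_3+hv_3,\bar t+hw,v_3,w)$. On $x=(x_1,x_2)$, $v=(v_1,v_2)$, SS2-xn becomes the same scheme for $\dot x=v$, $\dot v=b(x)Jv$, with $b=B_3(\eps x)/\eps$ and $J=\bigl(\begin{smallmatrix}0&1\\-1&0\end{smallmatrix}\bigr)$. A single frozen $\mathscr{A}$-step of length $h$ already has Jacobian determinant $1-\tfrac{h^2}{2}\nabla b\cdot Jv+\mathcal{O}(h^3)$. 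In the Strang composition these $\mathcal{O}(h^2)$ terms do cancel. Expanding one order further, either via $\log\det=\operatorname{tr}\log$ or via the determinant lemma above with the scalar parameter $b(x)$, gives
\[
\det\frac{\partial(\by^{n+1},\bu^{n+1})}{\partial(\by^{n},\bu^{n})}=1+\frac{h^{3}}{24}\,b(x^{n})\,\nabla b(x^{n})\cdot v^{n}+\mathcal{O}(h^{4}).
\]
This is nonzero as soon as $B_3\neq 0$ and the field varies along the velocity. So the Proposition, read in the usual sense that the one-step map has unit Jacobian, is false for nonuniform fields, and no block-elimination argument can prove it. What the Liouville argument (yours and the paper's) actually establishes is the frozen-coefficient statement: each $\Phi_h^{p}$ preserves volume. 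If you present that argument, state it in this form rather than as volume preservation of $\Psi_h$.
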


\begin{proof}
	We work in the eight-dimensional phase space $(\by,\bu)$. The vector field associated with $\mathscr{A}_{\bx^n}$ reads
	$
	R_{\mathscr{A}}(\by,\bu) = \bigl(\bu,\bF(\bx^n)\, \bu\bigr)^{\intercal}.
	$
	Note that $\bx^n$ is fixed at the start of each time step, so $\bF(\bx^n)$ is a constant skew-symmetric matrix. The divergence of this vector field is
	\begin{equation*}
		\nabla_{(\by,\bu)} \cdot R_{\mathscr{A}} = \nabla_{\by} \cdot \bu + \nabla_{\bu} \cdot \bigl(\bF(\bx^n)\, \bu\bigr) = 0 + \operatorname{tr}\,\bigl(\bF(\bx^n)\bigr).
	\end{equation*}
	All diagonal entries of a skew-symmetric matrix vanish, so $\operatorname{tr}\,(\bF(\bx^n))=0$. Consequently, $\nabla \cdot R_{\mathscr{A}}=0$. By Liouville's theorem, the flow $\varphi_{\tau}^{\mathscr{A}_{\bx^n}}$ is volume-preserving.
	
	The vector field of $\mathscr{B}_{\bx^n}$ is given by
	$
	R_{\mathscr{B}}(\by,\bu) = \bigl(\boldsymbol{0}, \bigl(\bF(\bx)-\bF(\bx^n)\bigr)\, \bu\bigr)^{\intercal}.
	$
	Here $\bx$ is a component of $\by$ and hence depends only on $\by$, not on $\bu$. When computing the divergence with respect to $(\by,\bu)$, the matrix $\bF(\bx)-\bF(\bx^n)$ is independent of $\bu$ and can be treated as constant. We then compute
	\begin{equation*}
		\nabla_{(\by,\bu)} \cdot R_{\mathscr{B}} = \nabla_{\by} \cdot \boldsymbol{0} + \nabla_{\bu} \cdot \bigl(\bigl(\bF(\bx)-\bF(\bx^n)\bigr) \, \bu\bigr) = \operatorname{tr}\, \bigl(\bF(\bx)-\bF(\bx^n)\bigr).
	\end{equation*}
	The difference of two skew-symmetric matrices is still skew-symmetric, so its trace equals zero. This yields $\nabla \cdot R_{\mathscr{B}}=0$, which implies that $\varphi_{\tau}^{\mathscr{B}_{\bx^n}}$ also preserves volume.
	
	Since both $\varphi_{h/2}^{\mathscr{A}_{\bx^n}}$ and $\varphi_{h}^{\mathscr{B}_{\bx^n}}$ are volume-preserving, their composition $\Psi_h = \varphi_{h/2}^{\mathscr{A}_{\bx^{n}}} \circ \varphi_{h}^{\mathscr{B}_{\bx^{n}}} \circ \varphi_{h/2}^{\mathscr{A}_{\bx^{n}}}$ preserves volume as well. This completes the proof.
\end{proof}

\begin{proposition}\label{Energy} (\textbf{Energy preservation}.) 
	The SS2-xn scheme \eqref{SS2} is energy-preserving.
\end{proposition}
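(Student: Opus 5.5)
The energy $\bH(\bx,\bar t,\bv,w)=\tfrac12\|\bv\|^2+\tfrac12 w^2=\tfrac12\bu^{\intercal}\bu$ depends only on $\bu$. It is a bilinear form, the plain transpose rather than a Hermitian one, so the fact that $w$ is imaginary is harmless. The plan is to show that each of the three substeps of $\Psi_h$ leaves $\bu^{\intercal}\bu$ unchanged. The $\by$-updates are then irrelevant, and the result follows by composition, exactly as in the proof of Proposition \ref{Volume}.

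The key algebraic fact is the following. If $M$ is any $4\times 4$ matrix, possibly with complex entries, satisfying $M^{\intercal}=-M$, then
\[
(\mathrm{e}^{sM})^{\intercal}\mathrm{e}^{sM}=\mathrm{e}^{sM^{\intercal}}\mathrm{e}^{sM}=\mathrm{e}^{-sM}\mathrm{e}^{sM}=I_4 .
\]
So $\mathrm{e}^{sM}$ is complex-orthogonal, and $(\mathrm{e}^{sM}\bu)^{\intercal}(\mathrm{e}^{sM}\bu)=\bu^{\intercal}\bu$.

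We check that the matrices involved qualify.
\begin{itemize}
\item $\widehat{\widehat{\bB}}$ is real skew-symmetric.
\item $\widehat{\bE}$ has blocks $-i\bE$ and $i\bE^{\intercal}$, so $\widehat{\bE}^{\intercal}=-\widehat{\bE}$.
\end{itemize}
Hence $\bF(\bx)^{\intercal}=-\bF(\bx)$ for every $\bx$, and consequently $(\bF_{\bar{\bx}^n}-\bF^n)^{\intercal}=-(\bF_{\bar{\bx}^n}-\bF^n)$ as well.

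Applying this to \eqref{SS2}, we get
\[
\bu^{n+1}=\mathrm{e}^{\frac h2\bF^n}\,\mathrm{e}^{h(\bF_{\bar{\bx}^n}-\bF^n)}\,\mathrm{e}^{\frac h2\bF^n}\bu^n,
\]
a product of three complex-orthogonal matrices. Therefore $(\bu^{n+1})^{\intercal}\bu^{n+1}=(\bu^n)^{\intercal}\bu^n$, that is, $\bH(\by^{n+1},\bu^{n+1})=\bH(\by^n,\bu^n)$ for all $n$.

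In terms of the original variables this reads
\[
\tfrac12\|\bv^{n}\|^2-\tfrac12(\gamma^n)^2=\text{const}.
\]
This is the discrete mass-shell relation, which equals $-\tfrac12$ if the initial data satisfy $\gamma_0^2=1+\|\bv_0\|^2$. It is worth remarking on this form, since it is what "energy" means here.

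The only point needing care is the use of the plain transpose instead of the conjugate transpose. $\bF$ is not anti-Hermitian, so the matrix exponential is not unitary. The argument must therefore rest on $\bF^{\intercal}=-\bF$ together with $(\mathrm{e}^{A})^{\intercal}=\mathrm{e}^{A^{\intercal}}$, both of which hold over $\mathbb{C}$. As an alternative, one could check the substeps via vector fields, in the style of Proposition \ref{Volume}: along either subflow, $\frac{d}{d\tau}\tfrac12\bu^{\intercal}\bu=\bu^{\intercal}M\bu=0$ for skew $M$. Since $\varphi^{\mathscr{A}_{\bx^n}}$ and $\varphi^{\mathscr{B}_{\bx^n}}$ are exact flows, each preserves $\bH$, and so does their composition $\Psi_h$.
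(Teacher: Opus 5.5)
Your proof is correct and follows the same route as the paper: $\bu^{n+1}$ is obtained from $\bu^n$ by multiplying with a product $Q$ of exponentials of skew-symmetric matrices satisfying $Q^{\intercal}Q=I_4$, so $\tfrac12\bu^{\intercal}\bu$ is unchanged. Your version is more careful than the paper's. The paper calls $\bF$ real skew-symmetric and appeals to preservation of Euclidean norms, but $\widehat{\bE}$ and $w$ are imaginary, so what is actually needed is exactly your complex-orthogonality identity $(\mathrm{e}^{sM})^{\intercal}\mathrm{e}^{sM}=I_4$ for $M^{\intercal}=-M$.
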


\begin{proof}
	Both $\bF^{n}$ and $\bF_{\bar{\bx}^n}$ are real skew-symmetric matrices, so their difference $\bF_{\bar{\bx}^n} - \bF^{n}$ is also real skew-symmetric. The matrix exponential of a real skew-symmetric matrix is an orthogonal matrix, which satisfies $Q^{\intercal} = Q^{-1}$ and preserves the Euclidean norm of real vectors. Thus $\mathrm{e}^{\frac{h}{2}\bF^{n}}$ and $\mathrm{e}^{h(\bF_{\bar{\bx}^n}-\bF^{n})}$ are norm-preserving orthogonal operators.
	
	According to the update rule of the SS2-xn scheme \eqref{SS2}, the vector $\bu = (\bv^{\intercal},w)^{\intercal}$ evolves via the product of the above matrix exponentials. Combining with the energy functional, we derive
	\begin{align*}
		& H(\bx^{n+1},\bar{t}^{\, n+1},\bv^{n+1},w^{n+1}) = \frac{1}{2}\left\|\bv^{n+1}\right\|^2 + \frac{1}{2} (w^{n+1})^2 
		= \frac{1}{2} 
		\begin{pmatrix}\bv^{n+1} \\ w^{n+1}\end{pmatrix}^{\intercal} \begin{pmatrix}\bv^{n+1} \\ w^{n+1}\end{pmatrix} \\
		&\quad = \frac{1}{2} 
		\begin{pmatrix}\bv^{n} \\ w^{n} \end{pmatrix}^{\intercal}
		\Big( \mathrm{e}^{\frac{h}{2}\bF^{n}}
		\mathrm{e}^{h (\bF_{\bar{\bx}^{n}} - \bF^{n})}
		\mathrm{e}^{\frac{h}{2}\bF^{n}} \Big)^{\intercal} 
		\Big( \mathrm{e}^{\frac{h}{2}\bF^{n}}
		\mathrm{e}^{h (\bF_{\bar{\bx}^{n}} - \bF^{n})}
		\mathrm{e}^{\frac{h}{2}\bF^{n}} \Big) 
		\begin{pmatrix}\bv^{n} \\ w^{n}\end{pmatrix} \\
		&\quad = \frac{1}{2} 
		\begin{pmatrix}\bv^{n} \\ w^{n}\end{pmatrix}^{\intercal} 
		\Big( \mathrm{e}^{\frac{h}{2}\bF^{n}}
		\mathrm{e}^{h (\bF_{\bar{\bx}^{n}} - \bF^{n})}
		\mathrm{e}^{\frac{h}{2}\bF^{n}} \Big)^{-1} 
		\Big( \mathrm{e}^{\frac{h}{2}\bF^{n}}
		\mathrm{e}^{h (\bF_{\bar{\bx}^{n}} - \bF^{n})}
		\mathrm{e}^{\frac{h}{2}\bF^{n}} \Big) 
		\begin{pmatrix}\bv^{n} \\ w^{n}\end{pmatrix} \\
		&\quad = \frac{1}{2} 
		\begin{pmatrix}\bv^{n} \\ w^{n}\end{pmatrix}^{\intercal} \begin{pmatrix}\bv^{n} \\ w^{n}\end{pmatrix}
		= H(\bx^{n},\bar{t}^{\, n},\bv^{n},w^{n}).
	\end{align*}
	This indicates that the SS2-xn scheme preserves the discrete energy exactly.
\end{proof}

\begin{proposition}\label{Lorentz}
	(\textbf{Lorentz  invariance}.) 
	The SS2-xn scheme \eqref{SS2} satisfies Lorentz invariance.
\end{proposition}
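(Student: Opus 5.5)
The plan is to make precise what Lorentz invariance means for the scheme, and then verify it for each factor of the Strang composition. Lorentz invariance will mean the following. Let $\Lambda$ be a Lorentz transformation. In the imaginary-time (Euclidean) formulation with $\bar t = it$ and $w = i\gamma$, $\Lambda$ acts as a complex orthogonal $4\times4$ matrix, $\Lambda^{\intercal}\Lambda = I_4$. Under $\Lambda$, the coordinates transform as $\by\mapsto\Lambda\by$ and $\bu\mapsto\Lambda\bu$. The field tensor transforms as $\bF\mapsto\Lambda\bF\Lambda^{\intercal}$, i.e. the transformed field is $\tilde\bF(\tilde\by)=\Lambda\bF(\by)\Lambda^{\intercal}$ at corresponding points. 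The claim to prove is that the numerical map commutes with this action. If $(\by^{n+1},\bu^{n+1})=\Psi_h(\by^n,\bu^n)$ is computed with $\bF$, then applying SS2-xn to $(\Lambda\by^n,\Lambda\bu^n)$ with $\tilde\bF$ gives $(\Lambda\by^{n+1},\Lambda\bu^{n+1})$. The proper-time step $h$ is itself Lorentz invariant, so it is unchanged.

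The key algebraic step is that for any analytic function $f$ and complex orthogonal $\Lambda$,
\begin{equation*}
f(h\Lambda\bF\Lambda^{\intercal})=\Lambda f(h\bF)\Lambda^{\intercal},
\end{equation*}
since $(\Lambda\bF\Lambda^{\intercal})^k=\Lambda\bF^k\Lambda^{\intercal}$ by $\Lambda^{\intercal}\Lambda=I_4$. Applying this with $f=\exp$ and $f=\varphi_1$ (both entire) handles all matrix functions in \eqref{SS2}. The argument then follows the stages of the scheme in order.

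\textbf{First half-step of $\mathscr{A}$.} The transformed field at the transformed point is $\tilde\bF^n=\Lambda\bF^n\Lambda^{\intercal}$. Hence
\begin{equation*}
\tilde{\bar\by}^n=\Lambda\by^n+\tfrac h2\Lambda\varphi_1(\tfrac h2\bF^n)\Lambda^{\intercal}\Lambda\bu^n=\Lambda\bar\by^n .
\end{equation*}
It follows that the transformed intermediate point corresponds to $\bar\by^n$, and therefore $\tilde\bF_{\bar\bx^n}=\Lambda\bF_{\bar\bx^n}\Lambda^{\intercal}$ by the field transformation law.

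\textbf{The $\mathscr{B}$ step.} The difference $\tilde\bF_{\bar\bx^n}-\tilde\bF^n=\Lambda(\bF_{\bar\bx^n}-\bF^n)\Lambda^{\intercal}$ transforms in the same way. Its exponential is therefore conjugated by $\Lambda$ as well.

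\textbf{Second half-step of $\mathscr{A}$.} Multiplying out the three conjugated exponentials in $\bu^{n+1}$, the interior factors $\Lambda^{\intercal}\Lambda$ cancel, which gives $\tilde\bu^{n+1}=\Lambda\bu^{n+1}$. The same cancellation in the $\by$-update gives $\tilde\by^{n+1}=\Lambda\by^{n+1}$.

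Two consistency checks can be added. The Minkowski norm $\bu^{\intercal}\bu=\|\bv\|^2-\gamma^2$ is preserved, since each factor $e^{hK}$ with $K$ complex skew-symmetric satisfies $(e^{hK})^{\intercal}e^{hK}=I_4$. This recovers the mass-shell condition, which is itself Lorentz invariant.

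The main obstacle is the field-transformation step, specifically justifying that $\bF$ evaluated at the transformed position equals $\Lambda\bF\Lambda^{\intercal}$. This is the covariance of the continuous system \eqref{Rcpd3}, and I will take it as the defining transformation law of the electromagnetic tensor $\bF$. In the scheme, $\bF$ depends on $\bx$ only, not on the full $\by$ including $\bar t$. To handle this, I will treat $\bF$ as a function on spacetime, i.e. a time-independent field in the original frame, whose transformed version $\tilde\bF(\tilde\by)$ is defined through the tensor law. The point to stress is that the scheme evaluates $\bF$ only at points, namely $\by^n$ and $\bar\by^n$, that transform covariantly. Once that is stated, the rest is the routine conjugation identity above.
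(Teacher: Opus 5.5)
Your proposal is correct and follows essentially the same route as the paper. Both arguments use the identity $f(LAL^{-1})=Lf(A)L^{-1}$ for the entire functions $\exp$ and $\varphi_1$, the tensor law $\bF'=L\bF L^{-1}$, and the covariance of the intermediate point $\bar\by^n$ to show that each stage of the Strang composition, and hence the whole scheme, commutes with the Lorentz action. Your version is slightly more careful in two places. You represent the Lorentz matrix as complex orthogonal in the $(\bx,\bar t)$ coordinates, so that $\Lambda^{-1}=\Lambda^{\intercal}$. You also treat $\bF$ as a spacetime field evaluated at covariantly transforming points, whereas the paper simply writes ${\bx^n}'=L\bx^n$.
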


\begin{proof}
	Let $L$ be a $4\times 4$ Lorentz matrix belonging to the Lorentz group $\mathrm{O}(1,3)$, which satisfies $L^{\intercal} g L = g$, where $g$ denotes the Minkowski metric. This matrix transforms the state vectors from the inertial frame $\mathcal{O}$ to a new inertial frame $\mathcal{O}'$ via
	\begin{equation*}
		\by' = L\by,\quad \bu' = L\bu.
	\end{equation*}
	Recall that $\bF$ corresponds to the electromagnetic tensor, which obeys the standard similarity transformation rule
	\begin{equation*}
		\bF'(\bx') = L \bF(\bx) L^{-1}.
	\end{equation*}
	
	We first prove that the submap $\varphi_{h}^{\mathscr{A}_{\bx^n}}$ commutes with the Lorentz transformation $L$. In the transformed frame $\mathcal{O}'$, the spatial position satisfies ${\bx^{n}}' = L\bx^n$, so
	\begin{equation*}
		\bF'^{n} = \bF'({\bx^{n}}') = L\bF^{n} L^{-1}.
	\end{equation*}
	For any invertible matrix $L$ and analytic function $f$, if $A' = LAL^{-1}$, then $f(A') = Lf(A)L^{-1}$. Since both the matrix exponential and $\varphi_1(s) = (\mathrm{e}^s-1)/s$ are analytic functions, we have
	\begin{equation*}
		\varphi_1(h\bF'^{n}) = L\varphi_1(h\bF^{n})L^{-1},\quad
		\mathrm{e}^{h\bF'^{n}} = L \mathrm{e}^{h\bF^{n}} L^{-1}.
	\end{equation*}
	For any numerical solution $(\by^n,\bu^n)$, the updated variables in the transformed frame read
	\begin{align*}
		\by'^{n+1} & = \by'^n + h\varphi_1(h\bF'^{n})\bu'^n
		= L\by^n + h L\varphi_1(h\bF^{n})L^{-1} L\bu^n 
		= L\big(\by^n + h\varphi_1(h\bF^{n})\bu^n\big)
		= L\by^{n+1}, \\
		\bu'^{n+1} &= \mathrm{e}^{h\bF'^{n}}\bu'^n
		= L \mathrm{e}^{h\bF^{n}} L^{-1} L\bu^n
		= L \mathrm{e}^{h\bF^{n}}\bu^n
		= L\bu^{n+1}.
	\end{align*}
	Consequently,
	\begin{equation*}
		\varphi_{h}^{\mathscr{A}_{\bx^{n}}}(L\by^n,L\bu^n) = L\,\varphi_{h}^{\mathscr{A}_{\bx^{n}}}(\by^n,\bu^n).
	\end{equation*}
	The same reasoning applies to the half-step operator $\varphi_{h/2}^{\mathscr{A}_{\bx^n}}$, which also commutes with $L$.
	
	Next, we verify the commutativity between $\varphi_{h}^{\mathscr{B}_{\bx^{n}}}$ and $L$.
	Note that $\bx$ consists of the first three spatial components of $\by$. From the above results for the half-step submap, the intermediate state satisfies $\bar{\by}'^n = L\bar{\by}^n$, which implies $\bar{\bx}'^n = L\bar{\bx}^n$.
	In frame $\mathcal{O}'$, we have $\bF'(\bar{\bx}'^n) = L\bF(\bar{\bx}^n)L^{-1}$. Combining with $\bF'^{n} = L\bF^{n} L^{-1}$, we obtain
	\begin{equation*}
		\bF'(\bar{\bx}'^n) - \bF'^{n} = L\big(\bF(\bar{\bx}^n)-\bF^{n}\big)L^{-1}.
	\end{equation*}
	This further yields
	\begin{equation*}
		\mathrm{e}^{h(\bF'(\bar{\bx}'^n)-\bF'^{n})} = L \mathrm{e}^{h(\bF(\bar{\bx}^n)-\bF^{n})} L^{-1}.
	\end{equation*}
	The update rule of $\varphi_{h}^{\mathscr{B}_{\bx^{n}}}$ in the transformed frame is
	\begin{align*}
		\by'^{n+1} & = \by'^n = L\by^n, \ \ \
		\bu'^{n+1}  = \mathrm{e}^{h(\bF'(\bar{\bx}'^n)-\bF'^{n})}\bu'^n
		= L \mathrm{e}^{h(\bF(\bar{\bx}^n)-\bF^{n})} L^{-1} L\bu^n
		= L\bu^{n+1}.
	\end{align*}
	Thus
	\begin{equation*}
		\varphi_{h}^{\mathscr{B}_{\bx^{n}}}(L\by^n,L\bu^n) = L\,\varphi_{h}^{\mathscr{B}_{\bx^{n}}}(\by^n,\bu^n).
	\end{equation*}
	
	The SS2-xn scheme is defined as the composition
	$
	\Psi_{h} = \varphi_{h/2}^{\mathscr{A}_{\bx^{n}}} \circ \varphi_{h}^{\mathscr{B}_{\bx^{n}}} \circ \varphi_{h/2}^{\mathscr{A}_{\bx^{n}}}.
	$
	Since each submap commutes with the Lorentz transformation $L$, their composition also commutes with $L$:
	\begin{align*}
		\Psi_h(L\by^n,L\bu^n)
		& = \varphi_{h/2}^{\mathscr{A}_{\bx^{n}}}\Bigl(\varphi_{h}^{\mathscr{B}_{\bx^{n}}}\bigl(\varphi_{h/2}^{\mathscr{A}_{\bx^{n}}}(L\by^n,L\bu^n)\bigr)\Bigr) = \varphi_{h/2}^{\mathscr{A}_{\bx^{n}}}\Bigl(\varphi_{h}^{\mathscr{B}_{\bx^{n}}}\bigl(L\,\varphi_{h/2}^{\mathscr{A}_{\bx^{n}}}(\by^n,\bu^n)\bigr)\Bigr) \\
		&= \varphi_{h/2}^{\mathscr{A}_{\bx^{n}}}\Bigl(L\,\varphi_{h}^{\mathscr{B}_{\bx^{n}}}\bigl(\varphi_{h/2}^{\mathscr{A}_{\bx^{n}}}(\by^n,\bu^n)\bigr)\Bigr) = L\,\varphi_{h/2}^{\mathscr{A}_{\bx^{n}}}\Bigl(\varphi_{h}^{\mathscr{B}_{\bx^{n}}}\bigl(\varphi_{h/2}^{\mathscr{A}_{\bx^{n}}}(\by^n,\bu^n)\bigr)\Bigr) = L\,\Psi_h(\by^n,\bu^n).
	\end{align*}
	In conclusion, the SS2-xn scheme possesses Lorentz invariance.
\end{proof}

In what follows, we establish the convergence of the scheme SS2-xn \eqref{SS2}, deferring its rigorous proof to Section \ref{sec3}.

\begin{theorem}\label{th2.1}
	(\textbf{Uniform second-order accuracy}.) 
	Suppose \(\bE(\cdot), \bB(\cdot) \in C^2(\mathbb{R}^3)\) and $\left\|\nabla \bB(0)\right\| \lesssim \varepsilon$. 
	Denote by $T_0>0$ the period of the flow generated by $\mathrm{e}^{\tau\widehat{\widehat{\bB}}(0)}$. When applying SS2-xn to system \eqref{Rcpd3} across the time interval $[0,T]$, we obtain numerical solutions $\by^n$ and $\bu^n$.
	Then we can find a constant $N_0>0$ independent of $\varepsilon$. For any integer $N\ge N_0$ and step size $h = \varepsilon T_0/N$, the uniform error bounds below hold for all $0 \leq n \leq T/h$:
	\begin{equation*}
		\left\|\by^n - \by(\tau_n)\right\| \lesssim N^{-m_{0}} +  h^2, 
		\quad \left\|\bu^n_{\parallel} - \bu_{\parallel}(\tau_n)\right\| \lesssim  N^{-m_{0}} + h^2, 
		\quad \varepsilon \left\|\bu^n - \bu(\tau_n)\right\| \lesssim h^2.
	\end{equation*}
	Here $m_0>0$ may be chosen arbitrarily large. The symbol $\nabla \bB$ represents the gradient of $\bB$. We define the component of $\bu$ parallel to the magnetic field as
	\begin{equation*}
		\bu_{\parallel}(\tau_n) := \frac{\widetilde{\bB}(\varepsilon \bx(\tau_n))}{\left\|\widetilde{\bB}(\varepsilon \bx(\tau_n))\right\|} \left( \frac{\widetilde{\bB}(\varepsilon \bx(\tau_n))}{\left\|\widetilde{\bB}(\varepsilon \bx(\tau_n))\right\|} \cdot \bu(\tau_n) \right),  \quad
		\bu_{\parallel}^n := \frac{\widetilde{\bB}(\varepsilon \bx^n)}{\left\|\widetilde{\bB}(\varepsilon \bx^n)\right\|} \left( \frac{\widetilde{\bB}(\varepsilon \bx^n)}{\left\|\widetilde{\bB}(\varepsilon \bx^n)\right\|} \cdot \bu^n \right),
	\end{equation*}
	with \( \widetilde{\bB} = (\bB^{\intercal},0)^{\intercal} \).
\end{theorem}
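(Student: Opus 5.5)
The proof will follow the standard route for uniformly accurate splitting schemes under the maximal ordering scaling (cf. the analysis of VELPA in \cite{Ruili2024} and \cite{Wang2023}). The three ingredients are a uniform-in-$\varepsilon$ a priori control of the exact solution, a local error representation in which the $\varepsilon^{-1}$ stiffness enters only through the bounded unitary-like propagator $\mathrm{e}^{\tau\widehat{\widehat{\bB}}(0)/\varepsilon}$, and an averaging argument over a full period that exploits $h=\varepsilon T_0/N$.

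\textbf{Step 1: uniform bounds on the exact solution.} By energy conservation, $\bu$ is bounded, so $\bx$ moves $O(1)$ distances on $[0,T]$ and $\varepsilon\bx=O(\varepsilon)$. Hence we may write
\[
\bB(\varepsilon\bx)/\varepsilon=\bB(0)/\varepsilon+\nabla\bB(0)\bx+O(\varepsilon),
\]
and the hypothesis $\|\nabla\bB(0)\|\lesssim\varepsilon$ makes the middle term $O(\varepsilon)$ as well. Thus
\[
\bF(\bx)=\widehat{\widehat{\bB}}(0)/\varepsilon+\mathcal{R}(\bx),\qquad \|\mathcal{R}\|+\|\nabla\mathcal{R}\|\lesssim 1 .
\]
Passing to the filtered variable $\tilde\bu(\tau)=\mathrm{e}^{-\tau\widehat{\widehat{\bB}}(0)/\varepsilon}\bu(\tau)$, the system for $(\by,\tilde\bu)$ has a right-hand side bounded uniformly in $\varepsilon$ but oscillating with period $\varepsilon T_0$. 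This gives $\|\dot\by\|,\|\dot{\tilde\bu}\|\lesssim1$, while $\ddot\by$ is only $O(\varepsilon^{-1})$.

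\textbf{Step 2: local error.} Writing the exact flow with Duhamel's formula around $\mathrm{e}^{h\bF^n}$ and comparing it with $\Psi_h$, the scheme becomes exact for the frozen part and approximates the $\mathscr{B}$-part by a midpoint rule in $\by$. The main term of the defect for $\by$ is an integral of the form
\[
\int_0^h\Big(\mathrm{e}^{s\bF^n}-\text{(midpoint quadrature)}\Big)\bigl(\bF(\bx(s))-\bF^n\bigr)\bu\,ds .
\]
Since $\bF(\bx(s))-\bF^n=O(h)$ uniformly (by the bound on $\mathcal R$), the naive local error is $O(h^2)$ in $\by$ and $O(h^2)$ in $\varepsilon\bu$. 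I will sharpen this to a decomposition $d_n=h^2\,\mathrm{e}^{\tau_n\widehat{\widehat{\bB}}(0)/\varepsilon}g(\tau_n/\varepsilon,\tau_n)+O(h^3/\varepsilon)$, where $g$ is $T_0$-periodic in its first argument, so that the $O(h^2)$ defect is an oscillatory term whose fast phase is explicit.

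\textbf{Step 3: global error and averaging.} The error recursion $e_{n+1}=\mathcal{L}_ne_n+d_n$ has a propagator $\mathcal L_n$ that is orthogonal up to an $O(h)$ Lipschitz perturbation (Proposition \ref{Energy}), so a discrete Gronwall argument applies. Summing $d_n$ naively gives only $O(h)$ globally for $\by$, and I expect this to be the main obstacle. The plan is to use that $h=\varepsilon T_0/N$ forces the fast phases $\tau_n/\varepsilon$ to sample $[0,T_0]$ at $N$ equispaced nodes. The sum over one period is then the trapezoidal rule for a smooth periodic function, which equals its average up to $N^{-m_0}$ for arbitrary $m_0$. The average of the oscillatory part vanishes for the position components and for the component parallel to $\widetilde{\bB}(0)$, because $\mathrm{e}^{\tau\widehat{\widehat{\bB}}(0)}$ rotates only the perpendicular directions. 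What remains are $O(h^3/\varepsilon)\cdot(T/h)\cdot\varepsilon$ terms, after using $\varepsilon$-smallness from Step 1, giving $h^2$. Replacing the parallel projection along $\widetilde{\bB}(0)$ by the one along $\widetilde{\bB}(\varepsilon\bx^n)$ costs $O(\varepsilon\|\bx^n-\bx(\tau_n)\|)+O(\varepsilon\|\nabla\bB\|)$, which is absorbed. The full $\bu$ error is bounded only after multiplication by $\varepsilon$, since the perpendicular component is unaveraged and $O(h^2/\varepsilon)$.

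The delicate points are twofold. First, one must ensure that $g$ is genuinely smooth and periodic in the fast variable, with derivatives bounded uniformly in $\varepsilon$; this relies on $\bE,\bB\in C^2$ together with a bootstrapping argument that keeps $\bx^n$ within $O(1)$ of $\bx(\tau_n)$. Second, one must take $N\ge N_0$ so that the Gronwall constants and the contraction of the stability estimate remain $\varepsilon$-independent.
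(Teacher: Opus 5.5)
There is a genuine gap in Step 3. You average the defects $d_n$ and then close with a discrete Gronwall argument on $e_{n+1}=\mathcal{L}_ne_n+d_n$, but the non-uniformity does not come from there.

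The local position defect is already $O(h^3)$, not $O(h^2)$: the velocity discrepancy within a step is $O(h^2)$, and it is integrated over a step of length $h$. Lemma~\ref{lemmalocal} gives $\|\xi_{\by}^n\|\lesssim\eps^3\mh^3=h^3$ with $h=\eps\mh$, so summing position defects is harmless. The real obstruction is the velocity error. It genuinely has size $h^2/\eps$, coming from a local defect $h^3/\eps$ (i.e.\ $\eps^2\mh^3$). It enters the position through the term $\frac{h}{2}\varphi_1(\cdot)(\cdots)e_{\bu}^n$ of the position error recursion. As long as you treat that coupling as an $O(h)$ Lipschitz perturbation, Gronwall can only give $\|e_{\by}^n\|\lesssim T\max_k\|e_{\bu}^k\|\sim h^2/\eps$, however well the $d_n$ are averaged.

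The missing step (Lemma~\ref{uniform}) is to average the \emph{propagator} acting on the accumulated velocity error, not the defects:
\begin{itemize}
\item Sum the position recursion over one gyro-period.
\item Unroll $e_{\bu}^{n,m}$ back to the start of that period through the velocity recursion. The velocity error then enters as $\eps\mh\sum_{n=0}^{N-1}\mathrm{e}^{(n+\frac12)\mh\bK(0)}e_{\bu}^{0,m}$ (rescaled time, $\mh=T_0/N$).
\item This is a trapezoidal sum of a periodic function. It equals $\eps\int_0^{T_0}\mathrm{e}^{s\widehat{\widehat{\bB}}(0)}\,ds\,e_{\bu}^{0,m}$ up to $O(\eps N^{-m_0})$ and $O(\eps^2)\|e_{\bu}^{0,m}\|$, which the paper evaluates as $\eps T_0\bigl(\widetilde{\widetilde{\bB}}(0)\cdot e_{\bu}^{0,m}\bigr)\widetilde{\widetilde{\bB}}(0)$.
\end{itemize}
So the perpendicular velocity error drops out of the position equation, and only the parallel one drives it, with weight $\eps$ per period.

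This forces a second, coupled recursion for $\|e_{\bu,\parallel}^{0,m}\|$, and here your justification is inverted. The rotation $\mathrm{e}^{\tau\widehat{\widehat{\bB}}(0)}$ fixes $\widetilde{\bB}(0)$, so parallel components are exactly what \emph{survives} averaging over a period. The parallel velocity error is therefore essentially the plain accumulation of the parallel parts of the velocity defects. Nothing in your proposal produces the factor $\eps$ in ``$O(h^3/\eps)\cdot(T/h)\cdot\eps$''. It has to come from the parallel velocity defects being one order in $\eps$ smaller than the full ones. Then per period $\|e_{\by}^{0,m+1}\|+\|e_{\bu,\parallel}^{0,m+1}\|\le(1+C\eps)\bigl(\|e_{\by}^{0,m}\|+\|e_{\bu,\parallel}^{0,m}\|\bigr)+C\eps N^{-m_0}+C\eps^3\mh^2$, and Gronwall over $M=O(1/\eps)$ periods gives $N^{-m_0}+\eps^2\mh^2=N^{-m_0}+h^2$.

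The paper claims this gain pointwise, $\|\widetilde{\widetilde{\bB}}\cdot\xi_{\bu}^{n}\|\lesssim\eps^3\mh^3$, via Rodrigues' formula. Note, however, that the parallel defect contains the midpoint-rule error of $\int\gamma E_\parallel(\bx(s))\,ds$ along the gyrating orbit. That error is generically of size $h^3/\eps$ and oscillates with the gyrophase. The dependable way to gain the missing $\eps$ is therefore cancellation of these defects over each period, using the equispaced phases $\tau_n/\eps$. That is where your trapezoidal idea genuinely belongs: at order $h^3/\eps$ on the parallel velocity defects, not at order $h^2$ on the position defects.
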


\begin{remark}
	This paper proposes an explicit Strang splitting integrator for RCPD systems that preserves volume, energy, and Lorentz invariance (see Propositions \ref{Volume}-\ref{Lorentz}), and for which we prove uniform second-order convergence in both $\by$ and $\bu_{\parallel}$. 
	Even under the condition \(\|\nabla \bB(0)\| \lesssim \varepsilon\), other structure-preserving algorithms, such as VELPA2 in \cite{Ruili2024}, still produce error bounds that scale with \(\varepsilon\).
	Subsequent numerical experiments will further confirm the practical advantage of our scheme. 
\end{remark}

\begin{remark}
	For systems subject to intense relativistic magnetic fields, the magnetic field gradient at the initial particle position satisfies \( \left\|\nabla \bB(0) \right\| \lesssim \varepsilon \), indicating that the external field is locally nearly uniform and slowly varying in space. This assumption is widely adopted in strong-field asymptotic analysis. Under this condition, we refine the local truncation error bounds for the SS2-xn scheme (see Lemma \ref{lemmalocal}) and derive the uniform second-order error estimates stated in Theorem \ref{th2.1}.
\end{remark}

\begin{remark}
	It should be pointed out that the step-size bound $h = \varepsilon T_0/N$ introduced within the theorem is not a mandatory requirement for practical numerical simulations. In fact, even when adopting a step size significantly larger than that permitted by the condition, e.g., \(h = 1/2^2\), the numerical results presented in Section \ref{sec4} still clearly exhibit second-order uniform convergence. We therefore conclude that such a step-size bound merely serves as an overly strict premise during theoretical deduction, instead of an inherent bottleneck of the proposed splitting scheme.
\end{remark}

\section{Error estimates: the proof of Theorem \ref{th2.1}}\label{sec3}

This section is devoted to proving Theorem~\ref{th2.1}. The overall proof strategy is outlined as follows.
\begin{itemize}
	\item In Section~\ref{subsec3.1}, we first carry out the time rescaling for the original system, then introduce its truncated approximate system under the long-time scale, and provide the error estimate between the two systems in Lemma \ref{approximate}.
	\item We then investigate the local truncation error and the standard global error of the SS2-xn scheme in Lemmas \ref{lemmalocal} and \ref{global}, respectively.
	Finally, we derive the second-order uniform error bound in Section \ref{subsec3.3}.
\end{itemize}

\subsection{The approximate truncated system under time rescaling}\label{subsec3.1}

Before starting the proof, we first recall the original relativistic system \eqref{Rcpd2}.
Under the strong magnetic field with the MOS, the magnetic field $\bB(\eps\bx)$ satisfies condition $\left\| \bB(\eps\bx) - \bB(0) \right\| \lesssim \eps$, and additionally needs to satisfy condition $\left\| \nabla \bB(0) \right\| \lesssim \eps$. For a fixed $T$ (independent of $\varepsilon$), to prove the second-order uniform error bound of the SS2-xn scheme in Theorem \ref{th2.1}, we rescale the time variable of the original system from $\tau$ to $\tau/\varepsilon$. For simplicity, we keep the previous variables and only use the new time step $\mh$ to distinguish, yielding the following long-time system: 
\begin{equation}
	\begin{cases}	
		\displaystyle \dot{\bx}(\tau) = \eps \bv(\tau), \\
		\dot{\bar{t}}(\tau) = \eps w(\tau), \\ \displaystyle 
		\dot{\bv}(\tau) =  \widehat{\bB}(\varepsilon \bx(\tau)) \bv(\tau) - i\eps w(\tau)\bE(\bx(\tau)),  \\[4pt]
		\dot{w}(\tau) = i\eps \bE(\bx(\tau))^{\intercal} \cdot \bv(\tau),
		\displaystyle \quad 0 < \tau \leq \frac{T}{\eps}.
	\end{cases}
	\label{Rcpd2s}
\end{equation}
Similarly, after introducing $\by = (\bx^{\intercal},\bar{t})^{\intercal}$ and $\bu = (\bv^{\intercal},w)^{\intercal}$, the above long-time system \eqref{Rcpd2s} can be equivalently written as 
\begin{equation}
	\begin{cases}	
		\displaystyle \dot{\by}(\tau) = \eps \bu(\tau), \\
		\dot{\bu}(\tau) =  \bK(\bx(\tau)) \bu(\tau),
		\displaystyle \quad 0 < \tau \leq \frac{T}{\eps},
	\end{cases}
	\label{Rcpd3s}
\end{equation}
where 
$
\bK(\bx) = \widehat{\widehat{\bB}}(\varepsilon \bx) + \eps \widehat{\bE}(\bx) 
$
is a $4\times 4$ skew-symmetric matrix.
Since \(\bE(\cdot), \bB(\cdot) \in C^2(\mathbb{R}^3)\), we obtain $\left\|\by\right\|_{L^{\infty}(0,T/\varepsilon)} + \left\|\bu\right\|_{L^{\infty}(0,T/\varepsilon)} \lesssim 1$.
On the time grid $\tau_n = n \mh$ ($n\in\mathbb{N}$), given the initial conditions $\by_0$ and $\bu_0$, we solve the above scaled long-time system \eqref{Rcpd3s} by means of the SS2-xn scheme and obtain
\begin{equation}
	\begin{cases}
		\begin{aligned}
			\displaystyle \by^{n+1} &= \by^{n}+\frac{\varepsilon \mh}{2}\varphi_{1}\biggl( \frac{\mh}{2} \bK^{n} \biggr)
			\Bigl(I_4 + \mathrm{e}^{\mh (\bK_{\bar{\bx}^{n}} - \bK^{n}) }
			\mathrm{e}^{\frac{\mh}{2}\bK^{n}}\Bigr) \bu^{n}, \\
			\displaystyle \bu^{n+1} &= \mathrm{e}^{\frac{\mh}{2}\bK^{n}}
			\mathrm{e}^{\mh (\bK_{\bar{\bx}^{n}} - \bK^{n})}
			\mathrm{e}^{\frac{\mh}{2}\bK^{n}} \bu^{n},
			\quad  0\leq n < \frac{T}{\varepsilon \mh},
		\end{aligned}
	\end{cases}
	\label{SS2s}
\end{equation}
in which the relevant notations are defined as
\begin{align*}
	\bar{\by}^{n}  = \by^{n}+\frac{\varepsilon\mh}{2}\varphi_{1}\biggl(\frac{\mh}{2}\bK^{n}\biggr) \bu^{n}, 
	\quad \bar{\bx}^{n} = \bar{\by}^{n}(1:3), 
	\quad \bK^{n}  = \bK(\bx^{n}), \quad \bK_{\bar{\bx}^{n}} = \bK(\bar{\bx}^{n}).
\end{align*}

We now introduce the approximate truncated system for the scaled long-time system \eqref{Rcpd2s} evaluated at $\tau = \tau_n + s$, as given below:
\begin{equation}
	\begin{cases}
		\dot{\widetilde{\bx}}^{\, n}(s) = \varepsilon \widetilde{\bv}^{\, n}(s), 
		\quad 0 < s \leq \mathfrak{h}, 
		\displaystyle \quad 0 \leq n < \frac{T}{\varepsilon \mathfrak{h}}, \\
		{\dot{\widetilde{\bar{t}}}}^{\,\,\,\, n}(s) = \varepsilon \widetilde{w}^{\, n}(s),  \quad
		\widetilde{\bx}^{\, n}(0) = \bx(\tau_n), \quad {\widetilde{\bar{t}}}^{\,\, n}(0) = \bar{t}(\tau_n), \\
		\displaystyle \dot{\widetilde{\bv}}^{\, n}(s) = \widetilde{\bv}^{\, n}(s) \times \big[ \bB_{mid} + \varepsilon \nabla \bB_{mid} \cdot \big( \widetilde{\bx}^{\, n}(s)-\bx_{mid} \big) \big]
		- i\varepsilon \widetilde{w}^{\, n}(s)\bE(\widetilde{\bx}^{\, n}(s)),  \\
		\dot{\widetilde{w}}^{\, n}(s) = i\eps \bE(\widetilde{\bx}^{\, n}(s))^{\intercal} \cdot \widetilde{\bv}^{\, n}(s), 
		\quad   \widetilde{\bv}^{\, n}(0) = \bv(\tau_n),
		\quad \widetilde{w}^{\, n}(0) = w(\tau_n), 
	\end{cases}
	\label{ts}
\end{equation}
where $\nabla \bB$ stands for the gradient of $\bB$, and 
\begin{equation*}
	\bx_{mid}=\bx(\tau_{n}+\frac{\mathfrak{h}}{2}), 
	\quad \bB_{mid}=\bB(\varepsilon \bx_{mid}), 
	\quad \nabla \bB_{mid}=\nabla \bB(\varepsilon \bx_{mid}).
\end{equation*}
The key ingredient in constructing this truncated system is the additional correction term $\varepsilon \nabla \bB_{mid} \cdot \big( \widetilde{\bx}^{\, n}(s)-\bx_{mid} \big)$. This term lowers the discrepancy between the two systems and improves the accuracy of the local truncation error of the SS2-xn scheme, and is therefore essential for proving the second-order uniform error bounds in the following analysis. 
Based on the boundedness of the electric field $\bE$, the magnetic field $\bB$, and the solutions $\left\|\by\right\|_{L^{\infty}(0,T/\varepsilon)}, \left\|\bu\right\|_{L^{\infty}(0,T/\varepsilon)}$ of the original system, it can also be deduced that the solution of the truncated system is bounded, i.e., $\left\|\widetilde{\by}^{\, n}\right\|_{L^{\infty}(0,\mathfrak{h})} + \left\|\widetilde{\bu}^{\, n}\right\|_{L^{\infty}(0,\mathfrak{h})} \lesssim 1$.

In order to analyze the error between the two systems, we first present the following error quantities: 
\begin{align}
	\zeta^n_{\bx}(s) & := \bx(\tau_n + s) - \widetilde{\bx}^{\, n}(s), \quad 
	\zeta^n_{\bar{t}}(s) := \bar{t}(\tau_n + s) - {\widetilde{\bar{t}}}^{\, \,n}(s),  \label{Z1} \\
	\zeta^n_{\bv}(s) & := \bv(\tau_n + s) - \widetilde{\bv}^{\, n}(s),
	\quad \zeta^n_{w}(s) := w(\tau_{n} + s) - \widetilde{w}^{\, n}(s),
    \quad 0 < s \leq \mh,
	\quad 0 \leq n < \frac{T}{\varepsilon \mathfrak{h}},
	\label{Z2}
\end{align}
and $ \zeta^{n}_{\by}(s) = (\zeta^n_{\bx}(s)^{\intercal},\zeta^n_{\bar{t}}(s))^{\intercal}$, 
$ \zeta^{n}_{\bu}(s) = (\zeta^n_{\bv}(s)^{\intercal},\zeta^n_{w}(s))^{\intercal}$.

\begin{lemma}\label{approximate}
	Let $\zeta^n_{\boldsymbol{y}}$ and $\zeta^n_{\boldsymbol{u}}$ denote the errors associated with the long-time-scale system \eqref{Rcpd2s} and its corresponding truncated approximate system \eqref{ts}, whose precise definitions are given in \eqref{Z1}-\eqref{Z2}. 
	Under the assumptions that the electric and magnetic fields satisfy $\boldsymbol{E}, \boldsymbol{B} \in C^2(\mathbb{R}^3)$ and the initial gradient condition $\|\nabla \boldsymbol{B}(0)\| \lesssim \varepsilon$ holds, the error between the two systems admits the following estimate: 
	\begin{equation}
		\left\|\zeta_{\by}^{n}(\mh)\right\| \lesssim \eps^5 \mh^4, 
		\quad \left\|\zeta_{\bu}^{n}(\mh)\right\| \lesssim \eps^4 \mh^3,
		\quad 0 \leq n < \dfrac{T}{\varepsilon \mathfrak{h}}. \label{M}
	\end{equation}
\end{lemma}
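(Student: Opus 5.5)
Because both systems start from the same data at $s=0$, we have $\zeta^n_{\by}(0)=0$ and $\zeta^n_{\bu}(0)=0$. The plan is to write the error equations over $[0,\mh]$ and bound the defect of the magnetic term by a Taylor expansion around $\bx_{mid}$. Subtracting \eqref{ts} from \eqref{Rcpd2s} gives
\begin{align*}
\dot\zeta^n_{\by}(s) &= \eps\,\zeta^n_{\bu}(s),\\
\dot\zeta^n_{\bv}(s) &= \bv(\tau_n+s)\times \bB(\eps\bx(\tau_n+s)) - \widetilde{\bv}^{\,n}(s)\times\big[\bB_{mid}+\eps\nabla\bB_{mid}\cdot(\widetilde{\bx}^{\,n}(s)-\bx_{mid})\big] + \eps\,(\text{terms in } \bE).
\end{align*}
The magnetic difference splits into three parts: $\zeta^n_{\bv}\times[\cdots]$, which is linear in the error with an $O(1)$ coefficient; $\bv\times\eps\nabla\bB_{mid}\cdot\zeta^n_{\bx}$, which is $O(\eps)\|\zeta^n_{\bx}\|$; and the remainder $\bv\times\rho(s)$. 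Here
\[
\rho(s)=\bB(\eps\bx(\tau_n+s))-\bB_{mid}-\eps\nabla\bB_{mid}\cdot(\bx(\tau_n+s)-\bx_{mid}).
\]
The $\bE$-terms carry a factor $\eps$ and are Lipschitz in $(\zeta_{\bx},\zeta_{\bu})$.

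The key step is the size of $\rho$. Since $\dot\bx=\eps\bv$ with $\bv$ bounded, $\|\bx(\tau_n+s)-\bx_{mid}\|\lesssim\eps\mh$. By $C^2$ Taylor expansion, $\|\rho(s)\|\lesssim \eps^2\|\bx-\bx_{mid}\|^2\lesssim \eps^4\mh^2$. This already yields a $u$-defect of order $\eps^4\mh^2$ per unit time and hence, after integration, $\|\zeta_{\bu}\|\lesssim\eps^4\mh^3$ if the linear terms can be controlled. To pass to $\zeta_{\by}(\mh)=\eps\int_0^{\mh}\zeta_{\bu}$, one gets $\eps\cdot\eps^4\mh^3\cdot\mh=\eps^5\mh^4$.

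The term I expect to be the main obstacle is $\zeta^n_{\bv}\times[\cdots]$, which has an $O(1)$ rotation coefficient. A crude Gronwall bound on $[0,\mh]$ only gives a factor $e^{C\mh}$, and that is harmless because $\mh$ is bounded. I would therefore treat it with the variation-of-constants formula. Let $\Phi(s,\sigma)$ be the propagator of the linear part $\dot\zeta=\widehat{\cdots}\,\zeta$; it is orthogonal in the real magnetic block and bounded. Then
\[
\zeta^n_{\bu}(s)=\int_0^s\Phi(s,\sigma)\big[\bv\times\rho(\sigma)+O(\eps)(\|\zeta^n_{\bx}\|+\|\zeta^n_{\bu}\|)\big]\,d\sigma .
\]
Taking norms gives
\[
\|\zeta_{\bu}(s)\|\lesssim \eps^4\mh^3+\eps\int_0^s\big(\|\zeta_{\bx}\|+\|\zeta_{\bu}\|\big).
\]
Combined with $\|\zeta_{\bx}(s)\|\le\eps\int_0^s\|\zeta_{\bu}\|$ and Gronwall on $[0,\mh]$, this gives both bounds in \eqref{M}.

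The assumption $\|\nabla\bB(0)\|\lesssim\eps$ is also used. Since $\eps\bx$ stays within $O(\eps)$ of $0$ over times $T/\eps$, we have $\|\nabla\bB_{mid}\|\lesssim\eps$. This keeps the $\nabla\bB$ coupling terms, and the correction term itself, uniformly small. If sharper bookkeeping is needed, this extra $\eps$ can absorb any residual $\eps^3\mh^2$ contributions.
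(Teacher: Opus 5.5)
Your proposal is correct and follows essentially the paper's own route. You subtract the truncated system and Taylor-expand $\bB(\eps\bx)$ about $\bx_{mid}$, so the remainder is $O(\eps^2\|\bx-\bx_{mid}\|^2)=O(\eps^4\mh^2)$. You absorb the $O(1)$ rotation into a bounded propagator via Duhamel, and treat the $\bE$- and $\nabla\bB$-couplings as $O(\eps)$ Lipschitz perturbations before closing on $[0,\mh]$. The paper closes with the sup-norm absorption $M_{\bv}\lesssim\eps^2\mh^2M_{\bv}+\eps^4\mh^3$ instead of Gronwall, but the two are equivalent here. Note that the hypothesis $\|\nabla\bB(0)\|\lesssim\eps$ is not essential for this particular lemma, because the $\bE$-couplings already contribute $O(\eps)\|\zeta^n_{\bx}\|$.
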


\begin{proof}
	{The proof is presented in \ref{appe}.} 
\end{proof}

\subsection{Uniform error estimates}\label{subsec3.3}

In Lemma \ref{lemmalocal}, we introduce the definition of the local truncation error for the SS2-xn scheme and conduct a thorough error estimation.

\begin{lemma}\label{lemmalocal}
	Let $\bE, \bB \in C^2(\mathbb{R}^3)$ and $\left\|\nabla \bB(0)\right\| \lesssim \varepsilon$.
	If the numerical solution at step $n-1$ is exact, denote by $\xi_{\by}^{n} = \big({\xi_{\bx}^{n}}^{\intercal},\xi_{\bar{t}}^n\big)^{\intercal}$ and $\xi_{\bu}^{n} = \big({\xi_{\bv}^{n}}^{\intercal},\xi_{w}^n\big)^{\intercal}$ the local truncation errors of the SS2-xn scheme for system \eqref{ts} at step $n$.
	There exists a constant $\mathfrak{h}_0$, independent of $\varepsilon$, satisfying the following: for any step size $\mathfrak{h}$ with $0 < \mathfrak{h} < \mathfrak{h}_0$ and integer $0 \leq n < \dfrac{T}{\varepsilon \mathfrak{h}}$, we have
	\begin{equation}
		\left\|\xi_{\by}^{n}\right\| \lesssim \varepsilon^{3} \mathfrak{h}^{3}, \quad 
		\left\|\xi_{\bu}^{n}\right\| \lesssim \varepsilon^{2} \mathfrak{h}^{3}.
		\label{local}
	\end{equation}
\end{lemma}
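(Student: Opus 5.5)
We compare one step of the rescaled scheme \eqref{SS2s}, started from the exact data $(\by(\tau_n),\bu(\tau_n))$, with the exact solution $(\widetilde{\by}^{\,n}(\mh),\widetilde{\bu}^{\,n}(\mh))$ of the truncated system \eqref{ts}. So we set
\[
\xi^n_{\by}=\by^{n+1}-\widetilde{\by}^{\,n}(\mh), \qquad \xi^n_{\bu}=\bu^{n+1}-\widetilde{\bu}^{\,n}(\mh).
\]
The plan has three parts: write both quantities in Duhamel (variation-of-constants) form around a common frozen matrix, expand the remaining nonlinear terms with the linear structure of \eqref{ts}, and check that the $\mathcal{O}(\eps\mh)$ and $\mathcal{O}(\eps\mh^2)$ contributions cancel by the symmetry of the Strang composition.

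\emph{Duhamel forms.} Write $\bK^n=\bK(\bx(\tau_n))$. In \eqref{ts} the matrix acting on $\widetilde{\bu}^{\,n}$ is
\[
\widetilde{\bK}(s)=\widehat{\widehat{\bB}}_{mid}+\eps\,\widehat{\widehat{(\nabla\bB_{mid}\cdot(\widetilde{\bx}^{\,n}(s)-\bx_{mid}))}}+\eps\widehat{\bE}(\widetilde{\bx}^{\,n}(s)).
\]
Using $\|\bB(\eps\bx)-\bB(0)\|\lesssim\eps$, the hypothesis $\|\nabla\bB(0)\|\lesssim\eps$ (which gives $\|\nabla \bB_{mid}\|\lesssim\eps$ on the bounded trajectory) and $\widetilde{\bx}^{\,n}(s)-\bx(\tau_n)=\mathcal{O}(\eps s)$, I would write $\widetilde{\bK}(s)=\bK^n+R(s)$ with $\|R(s)\|\lesssim\eps\mh$. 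The exact flow then reads
\[
\widetilde{\bu}^{\,n}(\mh)=\mathrm{e}^{\mh\bK^n}\bu(\tau_n)+\int_0^{\mh}\mathrm{e}^{(\mh-s)\bK^n}R(s)\,\widetilde{\bu}^{\,n}(s)\,ds .
\]
The numerical flow is put in the same shape: since $\|\bK_{\bar{\bx}^n}-\bK^n\|\lesssim\eps^2\mh$, expanding the middle factor gives
\[
\bu^{n+1}=\mathrm{e}^{\mh\bK^n}\bu(\tau_n)+\mh\,\mathrm{e}^{\frac{\mh}{2}\bK^n}(\bK_{\bar{\bx}^n}-\bK^n)\mathrm{e}^{\frac{\mh}{2}\bK^n}\bu(\tau_n)+\mathcal{O}(\eps^4\mh^2).
\]
Here the extra power of $\eps$ comes from the scaling of $\widehat{\widehat{\bB}}(\eps\bx)$ together with the gradient hypothesis.

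\emph{Expansion and cancellation.} The difference between the correction integral and the middle kick is a midpoint-quadrature remainder. I would Taylor-expand $R(s)$ about $s=\mh/2$. Its linear part in $s$ is exactly captured, because $\bar{\bx}^n$ is the position after the half step and $\bx_{mid}$ is used in \eqref{ts}. The quadratic part is $\mathcal{O}(\eps^2\mh^2)$, coming from $\ddot{\widetilde{\bx}}=\eps\dot{\widetilde{\bu}}=\mathcal{O}(\eps)$; integrating it over $[0,\mh]$ gives $\|\xi^n_{\bu}\|\lesssim\eps^2\mh^3$. We also need to replace $\widetilde{\bu}^{\,n}(s)$ inside the integral by $\mathrm{e}^{s\bK^n}\bu(\tau_n)$. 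The error of this replacement is $\mathcal{O}(\eps\mh)$, multiplied by $\|R\|\lesssim\eps\mh$ and integrated over $[0,\mh]$, which is again $\mathcal{O}(\eps^2\mh^3)$. For the position, $\by^{n+1}-\by(\tau_n)=\eps\int_0^{\mh}(\text{numerical velocity})$ while $\widetilde{\by}^{\,n}(\mh)-\by(\tau_n)=\eps\int_0^{\mh}\widetilde{\bu}^{\,n}(s)\,ds$. I would apply the same Duhamel comparison pointwise in $s$, where the velocity discrepancy is $\mathcal{O}(\eps^2\mh^2)$. After the midpoint symmetry, integrating and multiplying by $\eps$ yields $\eps^3\mh^3$. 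The condition $\mh<\mh_0$ is used only to control the Taylor remainders of $\mathrm{e}^{\mh(\cdot)}$ and of $\varphi_1$, which are uniformly bounded because $\bK$ is $\mathcal{O}(1)$.

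\emph{Main obstacle.} The hard part is tracking the $\eps$-powers. A naive bound gives only $\eps\mh^3$ for $\xi_{\bu}$, since $\widehat{\widehat{\bB}}(\eps\bx)$ varies at rate $\eps\cdot\eps\|\nabla\bB\|$. The gain relies on two things: the gradient assumption, and the fact that the linearised field in \eqref{ts} is centred at $\bx_{mid}$, so that the second-order term cancels against the symmetric Strang composition. I would first verify this cancellation on the model case $\bE\equiv0$ with a constant gradient, and then treat $\bE$ as an $\mathcal{O}(\eps)$ perturbation that contributes at most $\eps^2\mh^3$.
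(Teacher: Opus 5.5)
Your proposal is correct in outline, but it takes a different route from the paper. The paper freezes only the midpoint field $\widehat{\bB}_{mid}$. It writes $\widetilde{\bv}^{\,n}(\mh)$ and $\widetilde{w}^{\,n}(\mh)$ by variation of constants, with the gradient correction $\delta\bB$ and the electric terms as sources; the hypothesis $\|\nabla\bB_{mid}\|\lesssim\eps$ is used to make the $\delta\bB$ terms small. It then expands $\bE(\widetilde{\bx}^{\,n})$ about $\bx(\tau_n+\mh/2)$ and substitutes $\widetilde{w}^{\,n}$ and $\widetilde{\bv}^{\,n}$ into each other. Finally it Taylor-expands the Strang product and the $\varphi_1$-update to second order in $\mh$ and matches term by term. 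The residuals are differences such as $\widehat{\bB}_{mid}-\widehat{\bB}_{\bar{\bx}(\tau_n)}$ and $\bE(\bar{\bx}(\tau_n))-\bE(\bx(\tau_n+\mh/2))$, controlled by $\|\bx(\tau_n+\mh/2)-\bar{\bx}(\tau_n)\|\lesssim\eps\mh^3$.

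You instead freeze the full $4\times 4$ matrix $\bK^n$, for which the outer half-steps are exact. The comparison then reduces to the first-order Duhamel term $\int_0^{\mh}\mathrm{e}^{(\mh-s)\bK^n}R(s)\mathrm{e}^{s\bK^n}\,ds$ against the middle kick. That is a midpoint-rule remainder plus $R(\mh/2)-(\bK_{\bar{\bx}^n}-\bK^n)$. The latter is tiny because $\bar{\bx}^n$ agrees with $\widetilde{\bx}^{\,n}(\mh/2)$ to high order and \eqref{ts} is linearised at $\bx_{mid}$. This treats $\bE$ and $\bB$ uniformly and avoids the long expansions. For the position it needs no symmetry at all. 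The piecewise numerical velocity is $\mathrm{e}^{s\bK^n}\bu(\tau_n)$ before the kick and $\mathrm{e}^{(s-\mh/2)\bK^n}\mathrm{e}^{\mh(\bK_{\bar{\bx}^n}-\bK^n)}\mathrm{e}^{\frac{\mh}{2}\bK^n}\bu(\tau_n)$ after it. It differs from $\widetilde{\bu}^{\,n}(s)$ pointwise by $\lesssim\mh\bigl(\sup\|R\|+\|\bK_{\bar{\bx}^n}-\bK^n\|\bigr)$, and integrating against $\eps$ gives $\eps^3\mh^3$. What the paper's route buys in exchange is an explicit form of the dominant term of $\xi_{\bu}^n$, which it reuses in Lemma \ref{uniform} for the parallel component.

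Your bookkeeping must be tightened in four places, or the stated exponents do not follow.

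\emph{The bound on $R$.} $\|R(s)\|\lesssim\eps\mh$ is too weak: it gives a velocity discrepancy of only $\mathcal{O}(\eps\mh^2)$, hence $\|\xi_{\by}^n\|\lesssim\eps^2\mh^3$. The true bound is $\|R(s)\|\lesssim\eps^2\mh$. The $\bE$-part is $\eps\|\widehat{\bE}(\widetilde{\bx}^{\,n}(s))-\widehat{\bE}(\bx(\tau_n))\|\lesssim\eps^2\mh$, and the $\bB$-part is $\lesssim\eps\|\nabla\bB\|\,\eps\mh$.

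\emph{The middle exponential.} Its remainder is $\mathcal{O}(\mh^2\|\bK_{\bar{\bx}^n}-\bK^n\|^2)=\mathcal{O}(\eps^4\mh^4)$. The $\mathcal{O}(\eps^4\mh^2)$ you wrote is not $\lesssim\eps^2\mh^3$ when $\mh<\eps^2$.

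\emph{The linear part of $R$.} It is not ``exactly captured'', because the conjugating exponentials vary at $\mathcal{O}(1)$ rate. Apply the midpoint rule to $g(s)=\mathrm{e}^{(\mh-s)\bK^n}R(s)\mathrm{e}^{s\bK^n}$. Its second derivative contains $R''$, $[R',\bK^n]$ and $[[R,\bK^n],\bK^n]$. These are $\mathcal{O}(\eps^2)$ because $\|R'\|,\|R''\|\lesssim\eps^2$ (using $\bE\in C^2$) and $\|R\|\lesssim\eps^2\mh$. The last bound again needs the sharp estimate, since $\eps\mh$ would give a remainder $\eps\mh^4\not\lesssim\eps^2\mh^3$.

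\emph{The ``main obstacle'' paragraph.} Its diagnosis is off. The $\bE$-part of $R$ is the dominant piece, and on its own it contributes $\mathcal{O}(\eps^2\mh^2)$. So it cannot be added afterwards as a perturbation of size $\eps^2\mh^3$; it must go through the same midpoint cancellation. Your main argument already does this by treating the full $R$. In your framework the gradient hypothesis is not the source of the gain; it only improves the $\bB$-part of $R$ from $\eps^2\mh$ to $\eps^3\mh$.
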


\begin{proof}
	{The proof is given in \ref{appe}.} 
\end{proof}

Before analyzing the global error, we first define the errors of the SS2-xn scheme, denoted by $e_{\by}^{n+1} = \big({e_{\bx}^{n+1}}^{\intercal},e_{\bar{t}}^{n+1}\big)^{\intercal} := \by(\tau_{n+1}) - \by^{n+1}$ and $e_{\bu}^{n+1} = \big({e_{\bv}^{n+1}}^{\intercal},e_{w}^{n+1}\big)^{\intercal} := \bu(\tau_{n+1}) - \bu^{n+1}$. According to the definition of systematic errors in \eqref{Z1}-\eqref{Z2}, the total error of the scheme can be formulated as 
\begin{align}
	e_{\by}^{n+1} & = \zeta_{\by}^{n}(\mathfrak{h}) + \widetilde{e}_{\by}^{\, n}, \quad e_{\bu}^{n+1}  = \zeta_{\bu}^{n}(\mathfrak{h}) + \widetilde{e}_{\bu}^{\, n}, 
	\quad 0 \leq n < \frac{T}{\varepsilon \mathfrak{h}},  \label{eyu}
\end{align}  
where
$
\widetilde{e}_{\by}^{\, n} := \widetilde{\by}^{\, n}(\mathfrak{h}) - \by^{n+1},
$
$\widetilde{e}_{\bu}^{\, n} := \widetilde{\bu}^{\, n}(\mathfrak{h}) - \bu^{n+1}.
$

\begin{lemma}\label{global}	
	Let $\by^n$ and $\bu^n$ denote the numerical solutions generated by the SS2-xn scheme for the long-time system \eqref{Rcpd3s} over the time interval $[0,T/\varepsilon]$. There exists a positive constant $\mathfrak{h}_0$ that is independent of the small parameter $\varepsilon$, such that for all step sizes satisfying $0<\mathfrak{h}<\mathfrak{h}_0$, the following error bounds are valid:
	\begin{equation}
		\left\|\by^n - \by(\tau_n)\right\| \lesssim \varepsilon \mathfrak{h}^2, \quad \left\|\bu^n - \bu(\tau_n)\right\| \lesssim \varepsilon \mathfrak{h}^2,
		\quad 0 \leq n \leq \frac{T}{\varepsilon \mathfrak{h}},  \label{gyu}
	\end{equation}
	and moreover, 
	$$
	\left\|\by^n\right\| \leq \left\|\by\right\|_{L^\infty(0,T/\varepsilon)} + 1, \quad  \left\|\bu^n\right\| \leq \left\|\bu\right\|_{L^\infty(0,T/\varepsilon)} + 1.
	$$
\end{lemma}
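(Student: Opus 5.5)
Our plan is the standard combination of local error, stability and induction, with two distinct sources of error. By \eqref{eyu}, $e^{n+1}=\zeta^n(\mh)+\widetilde e^{\,n}$. The system part $\zeta^n(\mh)$ is controlled by Lemma \ref{approximate}. For $\widetilde e^{\,n}$ we subtract the SS2-xn step applied to the exact data $(\by(\tau_n),\bu(\tau_n))$ from the step applied to $(\by^n,\bu^n)$. Write $\Psi_\mh$ for the one-step map \eqref{SS2s}. Then
\[
\widetilde e^{\,n}_{\by}=\xi^{n}_{\by}+\bigl[\Psi_\mh(\by(\tau_n),\bu(\tau_n))-\Psi_\mh(\by^n,\bu^n)\bigr]_{\by},
\]
and similarly for $\bu$. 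Here $\xi^n$ is the local truncation error of Lemma \ref{lemmalocal}, which is the SS2-xn step started from exact data compared with the truncated flow \eqref{ts}.

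\textbf{Stability.} The key point is that every exponential appearing in $\Psi_\mh$ is orthogonal, since $\bK$ is skew-symmetric. For the $\bu$-component we telescope
\[
\mathrm{e}^{\frac{\mh}{2}\bK(\bx(\tau_n))}\mathrm{e}^{\mh(\bK_{\bar\bx}-\bK)}\mathrm{e}^{\frac{\mh}{2}\bK(\bx(\tau_n))}\bu(\tau_n)-(\text{same at }\bx^n)\bu^n .
\]
This splits into the orthogonal product acting on $e^n_{\bu}$, plus terms containing differences of exponentials. Since $\|\mathrm{e}^{A}-\mathrm{e}^{B}\|\le\|A-B\|$ for skew-symmetric $A,B$, each such term is bounded by $\mh\|\bK(\bx(\tau_n))-\bK(\bx^n)\|$ and the analogous quantity at $\bar\bx$. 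Because $\bK(\bx)=\widehat{\widehat\bB}(\eps\bx)+\eps\widehat\bE(\bx)$, we have $\|\bK(\bx)-\bK(\bx')\|\lesssim\eps\|\bx-\bx'\|$ thanks to the $C^1$ bounds on $\bB,\bE$. The computation of $\bar\bx$ contributes $\|\bar\bx(\text{exact})-\bar\bx^n\|\lesssim\|e^n_{\by}\|+\eps\mh\|e^n_{\bu}\|$. This gives
\[
\|\widetilde e^{\,n}_{\bu}-\xi^n_{\bu}\|\le\|e^n_{\bu}\|+C\eps\mh\bigl(\|e^n_{\by}\|+\|e^n_{\bu}\|\bigr).
\]
For the $\by$-component the prefactor $\eps\mh$ and $\|\varphi_1(\cdot)\|\le1$ give
\[
\|\widetilde e^{\,n}_{\by}-\xi^n_{\by}\|\le\|e^n_{\by}\|+C\eps\mh\bigl(\|e^n_{\bu}\|+\|e^n_{\by}\|\bigr).
\]
All these Lipschitz constants use only local bounds on the fields in a neighbourhood of the exact trajectory. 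This is exactly why we need the a priori bound $\|\by^n\|,\|\bu^n\|\le\|\cdot\|_{L^\infty}+1$.

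\textbf{Summation.} Set $\mathcal E^n=\|e^n_{\by}\|+\|e^n_{\bu}\|$ and insert Lemma \ref{approximate} and Lemma \ref{lemmalocal}. This yields
\[
\mathcal E^{n+1}\le(1+C\eps\mh)\mathcal E^n+C\bigl(\eps^2\mh^3+\eps^3\mh^3+\eps^4\mh^3+\eps^5\mh^4\bigr).
\]
The dominant defect is $\eps^2\mh^3$. Discrete Gronwall over $n\le T/(\eps\mh)$ steps gives $(1+C\eps\mh)^n\le \mathrm{e}^{CT}$ and an accumulated defect of $\frac{T}{\eps\mh}\cdot\eps^2\mh^3=T\eps\mh^2$. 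This is the claimed bound \eqref{gyu}.

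\textbf{Main obstacle.} The delicate part is the induction that keeps the numerical solution in the bounded region on which the Lipschitz constants are uniform in $\eps$. We assume inductively that the a priori bound holds up to step $n$. Then the estimate above gives $\mathcal E^{n+1}\le C_*\eps\mh^2$, with $C_*$ independent of $n$, $\eps$ and $\mh$. Choosing $\mh_0$ with $C_*\mh_0^2\le1$ closes the induction, since $\|\by^{n+1}\|\le\|\by\|_{L^\infty}+\|e^{n+1}_{\by}\|$. One also has to check that the orthogonality argument remains valid, i.e.\ that the $\bu$-part is not amplified. For this we treat $\bK$ as real skew-symmetric, as is done in Proposition \ref{Energy}. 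The remaining care is to make sure no hidden $1/\eps$ enters the constants. This is guaranteed because the stiff part $\widehat{\widehat\bB}$ appears only inside orthogonal exponentials, and its $\bx$-dependence carries the factor $\eps$.
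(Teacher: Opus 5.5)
Your proposal is correct and follows essentially the same route as the paper. It uses the error recursion $e^{n+1}=\zeta^n(\mh)+\xi^n+\bigl[\Psi_\mh(\by(\tau_n),\bu(\tau_n))-\Psi_\mh(\by^n,\bu^n)\bigr]$, with the bracket split into the propagator acting on $e^n$ plus perturbation terms of size $\eps\mh\bigl(\|e^n_{\by}\|+\eps\mh\|e^n_{\bu}\|\bigr)$ (the paper's $\eta^n_{\by},\eta^n_{\bu}$), then discrete Gronwall over $T/(\eps\mh)$ steps with dominant defect $\eps^2\mh^3$, and an induction that keeps $(\by^n,\bu^n)$ bounded.

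One remark applies equally to the paper's proof. Since $\widehat{\bE}$ has entries $\mp iE_j$, $\bK$ is only complex skew-symmetric, with skew-Hermitian part $\widehat{\widehat{\bB}}$ and Hermitian part $\eps\widehat{\bE}$. So instead of exact orthogonality you should use $\|\mathrm{e}^{\mh\bK}\|\le \mathrm{e}^{C\eps\mh}$, together with the analogous bounds for $\varphi_1$ and for differences of exponentials; these are harmlessly absorbed into your factor $(1+C\eps\mh)$.
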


\begin{proof}
	{The proof is also given in \ref{appe}.} 
\end{proof}

According to Lemma \ref{global}, the global error estimate of the SS2-xn scheme depends on $\varepsilon^{-1}$ when returning to the relativistic dynamical system under the original scale. To further improve the error bound, we utilize the exponential periodicity of the skew-symmetric magnetic field and conduct a refined error analysis within a single period. As a result, the error bound is improved to $\mathcal{O}(\varepsilon^2 \mathfrak{h}^2)$.

\begin{lemma}\label{uniform}
	Consider the same hypotheses as Lemma \ref{lemmalocal} and Lemma \ref{global}. Denote by $T_0>0$ the period corresponding to the periodic flow of $\mathrm{e}^{\tau\widehat{\widehat{\bB}}(0)}$. One can find a positive constant $N_0$ that does not depend on $\varepsilon$. If an integer $N$ satisfies $N > N_0$ and we set $\mathfrak{h} = T_0 / N$, then the estimates below are valid for $0 \leq n \leq \dfrac{T}{\varepsilon \mathfrak{h}}$:
	\begin{equation*}
		\left\| \by^n - \by(\tau_n) \right\| \lesssim N^{-m_{0}} + \varepsilon^2 \mathfrak{h}^{2}, 
		\quad \left\| \bu_{\parallel}^n - \bu_{\parallel}(\tau_n) \right\| \lesssim  N^{-m_{0}} + \varepsilon^2 \mathfrak{h}^{2},
	\end{equation*}
	where $m_0>0$ can be taken arbitrarily large.
\end{lemma}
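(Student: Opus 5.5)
We follow the strategy indicated before the lemma: exploit the exact periodicity of $\mathrm{e}^{\tau\widehat{\widehat{\bB}}(0)}$ over one period $T_0$, i.e. over $N$ steps when $\mathfrak{h}=T_0/N$. The plan is to show that over one period the local errors add up to $\mathcal{O}(\varepsilon^3\mathfrak{h}^2)$ rather than $\mathcal{O}(N\varepsilon^3\mathfrak{h}^3)$ in the slow components $\by$ and $\bu_\parallel$. Then over the $\mathcal{O}(1/\varepsilon)$ periods in $[0,T/\varepsilon]$ they accumulate only to $\mathcal{O}(\varepsilon^2\mathfrak{h}^2)$.

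\emph{Step 1 (error recursion).} Starting from \eqref{eyu}, we subtract the scheme \eqref{SS2s} from the exact flow of the truncated system \eqref{ts}. This gives a linearized error recursion
\begin{equation*}
e_{\bu}^{n+1}=\mathrm{e}^{\mathfrak{h}\bK^n}e_{\bu}^{n}+\varepsilon\mathfrak{h}\,\mathcal{O}(\|e_{\by}^n\|+\|e_{\bu}^n\|)+\xi_{\bu}^n+\zeta_{\bu}^n(\mathfrak{h}),
\end{equation*}
\begin{equation*}
e_{\by}^{n+1}=e_{\by}^n+\varepsilon\mathfrak{h}\varphi_1(\mathfrak{h}\bK^n)e_{\bu}^n+\mathcal{O}(\varepsilon^2\mathfrak{h}\|e\|)+\xi_{\by}^n+\zeta_{\by}^n(\mathfrak{h}).
\end{equation*}
The a priori bounds of Lemma \ref{global} keep the numerical solution bounded, so the nonlinear remainders are controlled. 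Using $\|\bB(\varepsilon\bx)-\bB(0)\|\lesssim\varepsilon$, we then replace $\mathrm{e}^{\mathfrak{h}\bK^n}$ by $\mathrm{e}^{\mathfrak{h}\widehat{\widehat{\bB}}(0)}$ at the cost of $\mathcal{O}(\varepsilon\mathfrak{h})$ perturbations.

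\emph{Step 2 (refined structure of the local errors).} We expand the local error of Lemma \ref{lemmalocal} more precisely. Its leading part should take the form $\xi_{\bu}^n=\varepsilon^2\mathfrak{h}^3\,G(\tau_n)+\mathcal{O}(\varepsilon^3\mathfrak{h}^3)$, where $G$ depends on $\tau_n$ only through the fast phase $\mathrm{e}^{\tau_n\widehat{\widehat{\bB}}(0)}$ applied to slowly varying quantities. Summing the resulting terms $\mathrm{e}^{-\tau_k\widehat{\widehat{\bB}}(0)}\xi^k$ over one full period, in the variation-of-constants form of Step 1, gives a Riemann sum of a smooth $T_0$-periodic function. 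Along the nonresonant directions, that is, for the perpendicular oscillating modes, the zero-mean part of this sum is $\mathcal{O}(N^{-m_0})$ by the trapezoidal rule for periodic functions; this is the source of the $N^{-m_0}$ term. For the mean part, we will show that it lies in the kernel of $\widehat{\widehat{\bB}}(0)$ only at higher order in $\varepsilon$. This yields a per-period contribution of $\mathcal{O}(\varepsilon^3\mathfrak{h}^2)$ to $\by$ and to $\bu_\parallel$.

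\emph{Step 3 (accumulation over periods).} We apply a discrete Gronwall argument on the period-grid $\tau=kT_0$, $k\lesssim 1/\varepsilon$, to the projections of the error onto $\by$ and onto the parallel direction $\widetilde{\bB}/\|\widetilde{\bB}\|$. On these components the propagator over one period is $I+\mathcal{O}(\varepsilon)$. Accumulating over $1/\varepsilon$ periods then gives errors $\lesssim N^{-m_0}+\varepsilon^2\mathfrak{h}^2$. The change in the projection direction between $\bx^n$ and $\bx(\tau_n)$ is $\mathcal{O}(\varepsilon\|e_{\bx}\|)$, so it is harmless. We also need the contribution of the full error $e_{\bu}$ to the $\by$ update. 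Lemma \ref{global} gives $\|e_{\bu}\|\lesssim\varepsilon\mathfrak{h}^2$, and this enters multiplied by $\varepsilon\mathfrak{h}$ and averaged over a period; it contributes only $\mathcal{O}(\varepsilon^2\mathfrak{h}^2)$ in total.

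\emph{Main obstacle.} The difficult step is Step 2: identifying the leading term of the local error precisely enough to show that its period average has no $\mathcal{O}(\varepsilon^2\mathfrak{h}^3)$ component in the slow directions. Here the assumption $\|\nabla\bB(0)\|\lesssim\varepsilon$ and the midpoint correction in \eqref{ts} are decisive. I would first expand $\bF(\bar{\bx}^n)-\bF(\bx^n)$ to first order and check that the commutator terms produced by the Strang splitting are purely oscillatory, that is, that they have zero mean under conjugation by $\mathrm{e}^{\tau\widehat{\widehat{\bB}}(0)}$.
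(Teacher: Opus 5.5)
Your skeleton matches the paper's. You set up the error recursion on the grid $\tau_n^m=mT_0+n\mathfrak h$, compare with the exactly $T_0$-periodic propagator, sum over one period, use a quadrature step, and run Gronwall over the $M\sim 1/\varepsilon$ periods on the slow components. However, you spend the periodicity in the wrong place for $\by$, and Step~3 mishandles the decisive term.

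For $\by$, no refined structure of the local errors is needed. Lemma~\ref{lemmalocal} already gives $\sum_{n<N}\|\xi_{\by}^{n,m}\|\lesssim N\varepsilon^3\mathfrak h^3=T_0\varepsilon^3\mathfrak h^2$; your ``$\mathcal O(\varepsilon^3\mathfrak h^2)$ rather than $\mathcal O(N\varepsilon^3\mathfrak h^3)$'' compares two equal quantities. The in-period feed of $\xi_{\bu}$ into $\by$ is $\varepsilon\mathfrak h\sum_{n<N}\sum_{j<n}\|\xi_{\bu}^{j,m}\|\lesssim\varepsilon^3\mathfrak h^2$.

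The only term the crude bounds do not control is the propagated start-of-period error $\varepsilon\mathfrak h\sum_{n<N}\mathrm e^{(n+\frac12)\mathfrak h\bK(0)}e_{\bu}^{0,m}$. The paper applies the periodic trapezoidal rule exactly there, and its error $\mathcal X^m$ is where $N^{-m_0}$ comes from. The gyro-average of $\mathrm e^{s\widehat{\widehat{\bB}}(0)}$ then leaves $\varepsilon T_0\,e_{\bu,\parallel}^{0,m}$.

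This parallel part does not average out. So your claim that $\|e_{\bu}\|\lesssim\varepsilon\mathfrak h^2$ from Lemma~\ref{global} makes the contribution of $e_{\bu}$ to $\by$ ``only $\mathcal O(\varepsilon^2\mathfrak h^2)$ in total'' is false. It gives $\varepsilon T_0\cdot\varepsilon\mathfrak h^2$ per period, hence $\mathcal O(\varepsilon\mathfrak h^2)$ after $M$ periods, which is the non-uniform bound. Only the perpendicular part is harmless. The parallel part is the $\mathcal O(\varepsilon)$ off-diagonal entry of your period propagator and must be closed through the coupled inequalities \eqref{eby4}--\eqref{ebu5}.

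Also, $\mathrm e^{s\widehat{\widehat{\bB}}(0)}$ acts trivially on the fourth component, so $e_w$ is a slow component too. It feeds $e_{\bar t}$ in the same way and enters the parallel velocity error through $\widehat{\bE}$, so it has to be part of the Gronwall system.

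Your Step~2 is genuinely needed for $\bu_\parallel$. The crude bound $\|\xi_{\bu}\|\lesssim\varepsilon^2\mathfrak h^3$ sums to $\varepsilon^2\mathfrak h^2$ per period, one power of $\varepsilon$ short. The paper argues pointwise instead: via Rodrigues' formula (the rotation $\mathrm e^{\rho\mathfrak h\widehat{\bB}_{mid}}$ fixes the field direction) it asserts $\|\widetilde{\widetilde{\bB}}^{n+1,m}\cdot\xi_{\bu}^{n,m}\|\lesssim\varepsilon^3\mathfrak h^3$, and hence \eqref{ebu4}.

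Your cancellation over a gyroperiod is a genuinely different mechanism, and arguably the more robust one. Already for constant $\bB$, the parallel local error contains the midpoint-rule remainder $\approx\tfrac{1}{24}\varepsilon^2\mathfrak h^3\gamma\,\frac{\bB}{\|\bB\|}\cdot(\nabla\bE\,\dot{\bv})$ with $\dot{\bv}\approx\widehat{\bB}\bv$. This has zero gyro-average but is generically of size $\varepsilon^2\mathfrak h^3$ at individual steps.

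In your proposal, however, this step is only announced. To carry it out you must:
\begin{itemize}
\item isolate the entire $\mathcal O(\varepsilon^2)$ part of the parallel local error as an exact function of $\mathfrak h$ (a Taylor truncation leaves $\varepsilon^2\mathfrak h^4$ per step, which accumulates to $\varepsilon\mathfrak h^3$ over $[0,T/\varepsilon]$, not $\mathcal O(\varepsilon^2\mathfrak h^2)$ uniformly in $\varepsilon$);
\item show that this part is linear in $\bv_\perp(\tau_n)$ with slowly varying coefficients;
\item account for the true gyration being driven by $\bB(\varepsilon\bx(\tau))$, so it is $T_0$-periodic only up to $\mathcal O(\varepsilon)$.
\end{itemize}
That yields cancellation over a period up to a relative $\mathcal O(\varepsilon)$, which suffices for $\mathcal O(\varepsilon^3\mathfrak h^2)$ per period. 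It does not give $\mathcal O(N^{-m_0})$: the local errors are not samples of an exactly periodic function, and the $N^{-m_0}$ term has nothing to do with them.
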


\begin{proof}
	For any fixed $T$, we have 
	\begin{equation*}
		\frac{T}{\varepsilon} = M T_0  + \tau_r, \quad 0 \leq \tau_r < T_0,
	\end{equation*}
	where $T_0$ denotes the minimal positive period of the orthogonal matrix flow $\mathrm{e}^{\tau\widehat{\widehat{\bB}}(0)}$ and 
	\begin{equation*}
		M = \bigg\lfloor  \frac{T}{\varepsilon T_0} \bigg\rfloor  = \mathcal{O}(1/\varepsilon).
	\end{equation*}
	Without loss of generality, we only consider the case $\tau_r=0$.
	Suppose there exists a positive integer $N_0$ such that the assumptions in Lemma \ref{global} are satisfied for all integers $N \geq N_0$ and step sizes $\mathfrak{h} = T_0 / N \leq T_0 / N_0 = \mathfrak{h}_0$. Then the previously derived global error estimates as well as the boundedness of numerical solutions remain valid.
	
	Based on the above preparations, we introduce a refined temporal mesh and use $\tau_{n}^{m} = m T_0 + n\mh$ $(0 \leq n \leq N, 0 \leq m < N)$ to stand for the $n$-th grid point within the $m$-th periodic interval. The associated numerical solutions and approximation errors at this grid are denoted by $\by_{n}^{m},\bu_{n}^{m}$ and $e_{\by}^{n,m},e_{\bu}^{n,m}$, respectively. By following the same derivation strategy as for the error recursions \eqref{eya}-\eqref{eub}, we obtain the updated error equations at the newly defined grid points as given by 
	\begin{subequations}
		\begin{align}
			e_{\by}^{n+1,m} = &\,\, e_{\by}^{n,m} + \frac{\varepsilon \mh}{2}\varphi_{1}\biggl( \frac{\mh}{2} \bK_{\bx(\tau_{n}^{m})} \biggr)
			\Bigl(I_4 + \mathrm{e}^{\mh (\bK_{\bar{\bx}(\tau_{n}^{m})} - \bK_{\bx(\tau_{n}^{m})}) }
			\mathrm{e}^{\frac{\mh}{2}\bK_{\bx(\tau_{n}^{m})}} \Bigr)  e_{\bu}^{n,m}  
			+ \xi_{\by}^{n,m} + \zeta_{\by}^{n,m}(\mathfrak{h}) + \eta_{\by}^{n,m},    \label{ebya}  \\
			e_{\bu}^{n+1,m} = &\,\, \mathrm{e}^{\frac{\mh}{2}\bK_{\bx(\tau_{n}^{m})}}
			\mathrm{e}^{\mh (\bK_{\bar{\bx}(\tau_{n}^{m})} - \bK_{\bx(\tau_{n}^{m})})}
			\mathrm{e}^{\frac{\mh}{2}\bK_{\bx(\tau_{n}^{m})}} e_{\bu}^{n,m}
			+ \xi_{\bu}^{n,m} + \zeta_{\bu}^{n,m}(\mathfrak{h}) + \eta_{\bu}^{n,m},
			\quad 0 \leq n \leq N-1,
			\quad 0 \leq m < M. \label{ebub}
		\end{align}
	\end{subequations}
	
	We now perform a refined error analysis over one full period. Fixing the index $m$, we sum equation \eqref{ebya} for $n$ from $0$ to $N-1$ to obtain
	\begin{equation}
		e_{\by}^{N,m} = e_{\by}^{0,m} + \varepsilon \mathfrak{h} \sum_{n=0}^{N-1} \mathrm{e}^{\frac{\mathfrak{h}}{2} \bK(0)} e_{\bu}^{n,m}
		+ \sum_{n=0}^{N-1} \big( \xi_{\by}^{n,m} + \zeta_{\by}^{n,m}(\mathfrak{h}) + \eta_{\by}^{n,m} \big) +\delta_{\by}^{n,m},
		\quad 0 \leq m < M,  \label{eby2}
	\end{equation}
	where 
	\begin{equation*}
		\delta_{\by}^{n,m} = \varepsilon \mathfrak{h} \sum_{n=0}^{N-1}
		\left[
		\frac{1}{2} \varphi_{1}\biggl( \frac{\mh}{2} \bK_{\bx(\tau_{n}^{m})} \biggr)
		\Bigl(I_4 + \mathrm{e}^{\mh (\bK_{\bar{\bx}(\tau_{n}^{m})} - \bK_{\bx(\tau_{n}^{m})}) }
		\mathrm{e}^{\frac{\mh}{2}\bK_{\bx(\tau_{n}^{m})}} \Bigr) - \mathrm{e}^{\frac{\mathfrak{h}}{2} \bK(0)} \right] e_{\bu}^{n,m}.
	\end{equation*}
	Using Taylor expansions along with estimate \eqref{gyu} and 
	\begin{equation}
		\left\|  \bK_{\bar{\bx}(\tau_{n}^{m})} - \bK(0) \right\| \lesssim 
		\left\| 
		\begin{pmatrix}
			\widehat{\bB}(\eps \bar{\bx}(\tau_n^m)) - \widehat{\bB}(0) & -i\eps \big( \bE(\bar{\bx}(\tau_n^m)) - \bE(0) \big) \\[2pt]
			i\eps \big( \bE(\bar{\bx}(\tau_n^m))^{\intercal} - \bE(0)^{\intercal} \big) & 0
		\end{pmatrix} 
		\right\| \lesssim \eps,  \label{K0}
	\end{equation}
	we further derive
	\begin{equation}
		\left\| \delta_{\by}^{n,m} \right\| \lesssim \eps\mh \sum_{n=0}^{N-1} \eps\mh \cdot \eps\mh^2 \lesssim \eps^3\mh^3, \quad 0 \leq m < M.  \label{deby}
	\end{equation}
	The first summation term on the right-hand side of \eqref{eby2} can be constructed and analyzed by virtue of \eqref{ebub}. 
	We then rearrange \eqref{ebub} as
	\begin{align}
		e_{\bu}^{n+1,m} = \mathrm{e}^{\mathfrak{h} \bK(0)}  e_{\bu}^{n,m}
		+ \xi_{\bu}^{n,m} + \zeta_{\bu}^{n,m}(\mathfrak{h}) + \eta_{\bu}^{n,m} + \delta_{\bu}^{n,m}, 
		\quad 0 \leq n \leq N-1,
		\quad 0 \leq m < M.
		\label{ebu2}
	\end{align}
	Here the additional term satisfies 
	\begin{equation*}
		\delta_{\bu}^{n,m} = \big( \mathrm{e}^{\frac{\mh}{2}\bK_{\bx(\tau_{n}^{m})}}
		\mathrm{e}^{\mh (\bK_{\bar{\bx}(\tau_{n}^{m})} - \bK_{\bx(\tau_{n}^{m})})}
		\mathrm{e}^{\frac{\mh}{2}\bK_{\bx(\tau_{n}^{m})}} - \mathrm{e}^{\mathfrak{h} \bK(0)} \big) e_{\bu}^{n,m}.
	\end{equation*}
	Combining \eqref{gyu} and \eqref{K0}, 
	we arrive at the bound 
	\begin{equation}
		\left\| \delta_{\bu}^{n,m} \right\| \lesssim \eps^2\mh^3, 
		\quad 0 \leq m < M.  \label{debu}
	\end{equation}  
	Recursively expanding the error term on the right-hand side of \eqref{ebu2} down to $n=0$ yields 
	\begin{equation*}
		e_{\bu}^{n,m} = \mathrm{e}^{n \mathfrak{h} \bK(0)}  e_{\bu}^{0,m}
		+ \sum_{j=0}^{n-1} \mathrm{e}^{(n-1-j) \mathfrak{h} \bK(0)} 
		\big(  \xi_{\bu}^{j,m} + \zeta_{\bu}^{j,m}(\mathfrak{h}) + \eta_{\bu}^{j,m} + \delta_{\bu}^{j,m} \big).   
	\end{equation*}
	Multiplying both sides by $\eps\mathfrak{h} \mathrm{e}^{\frac{\mathfrak{h}}{2}\bK(0)}$ from the left and summing over $n$ from $0$ to $N-1$, we arrive at
	\begin{equation}
		\varepsilon \mathfrak{h} \sum_{n=0}^{N-1} \mathrm{e}^{\frac{\mathfrak{h}}{2} \bK(0)} e_{\bu}^{n,m} 
		= \varepsilon \mathfrak{h} \sum_{n=0}^{N-1}
		\mathrm{e}^{(n+\frac{1}{2}) \mathfrak{h} \bK(0)} e_{\bu}^{0,m} 
		+ \varepsilon \mathfrak{h} \sum_{n=0}^{N-1} \sum_{j=0}^{n-1} 
		\mathrm{e}^{(n-\frac{1}{2}-j) \mathfrak{h} \bK(0)}
		\big(  \xi_{\bu}^{j,m} + \zeta_{\bu}^{j,m}(\mathfrak{h}) + \eta_{\bu}^{j,m} + \delta_{\bu}^{j,m} \big).
		\label{ehk}
	\end{equation}
	Substituting \eqref{ehk} into the first summation on the right-hand side of \eqref{eby2} and eliminating the corresponding term, we get
	\begin{equation}
		e_{\by}^{N,m} = e_{\by}^{0,m} + \varepsilon \mathfrak{h} \sum_{n=0}^{N-1}
		\mathrm{e}^{(n+\frac{1}{2}) \mathfrak{h} \bK(0)} e_{\by}^{0,m} + \Gamma^{m},
		\quad 0 \leq m < M,
		\label{eby3}
	\end{equation}
	where the small quantity satisfies 
	\begin{equation*}
		\Gamma^{m} = \sum_{n=0}^{N-1}
		\big( \xi_{\by}^{n,m} + \zeta_{\by}^{n,m}(\mathfrak{h}) + \eta_{\by}^{n,m} \big) +\delta_{\by}^{n,m}   + \varepsilon \mathfrak{h} \sum_{n=0}^{N-1} \sum_{j=0}^{n-1} 
		\mathrm{e}^{(n-\frac{1}{2}-j) \mathfrak{h} \bK(0)}
		\big(  \xi_{\bu}^{j,m} + \zeta_{\bu}^{j,m}(\mathfrak{h}) + \eta_{\bu}^{j,m} + \delta_{\bu}^{j,m} \big).  
	\end{equation*}
	Using the estimates \eqref{M}, \eqref{local}, \eqref{etaby}, \eqref{etabu}, \eqref{gyu}, \eqref{deby}, \eqref{debu}, together with $N \mathfrak{h} = T_{0} \lesssim 1$ and $n^2\mathfrak{h}^2 \leq N^2 \mathfrak{h}^2 = T_{0}^{2} \lesssim 1$, we deduce
	\begin{equation*}
		\left\| \Gamma^{m} \right\| \lesssim \varepsilon^{3} \mathfrak{h}^{2}, 
		\quad 0 \leq m < M. 
	\end{equation*}
	We next apply the trapezoidal quadrature rule to the summation in \eqref{eby3} to obtain 
	\begin{equation*}
		\varepsilon \mathfrak{h} \sum_{n=0}^{N-1}
		\mathrm{e}^{(n+\frac{1}{2}) \mathfrak{h} \bK(0)} e_{\by}^{0,m} = 
		\eps \int_{0}^{T_0} \mathrm{e}^{s \bK(0)} ds \cdot e_{\by}^{0,m} + \mathcal{X}^{m},
	\end{equation*}
	with 
	$
		\left\| \mathcal{X}^{m} \right\| \lesssim \eps N^{-m_{0}},  
	$
	where $m_0>0$ can be chosen arbitrarily large.
	We now approximate the matrix exponential by expanding $\bK(0)$:
	\begin{equation*}
		\mathrm{e}^{s \bK(0)} = \mathrm{e}^{s \big( \widehat{\widehat{\bB}}(0) - \eps \widehat{\bE}(0) \big) } = \mathrm{e}^{\eps s \widehat{\bE}(0)} \mathrm{e}^{s \widehat{\widehat{\bB}}(0)} + \mathcal{O}(\eps s^2)
		= \mathrm{e}^{s \widehat{\widehat{\bB}}(0)} + \mathcal{O}(\eps s) \mathrm{e}^{s \widehat{\widehat{\bB}}(0)}.
	\end{equation*}
	Substituting this expression into the integral yields
	\begin{equation*}
		\eps \int_{0}^{T_0} \mathrm{e}^{s \bK(0)} ds \cdot e_{\by}^{0,m} 
		=  \eps \int_{0}^{T_0} \big( \mathrm{e}^{s \widehat{\widehat{\bB}}(0)} + \mathcal{O}(\eps s) \mathrm{e}^{s \widehat{\widehat{\bB}}(0)} \big) ds \cdot e_{\by}^{0,m}  \lesssim  \eps \int_{0}^{T_0} \mathrm{e}^{s \widehat{\widehat{\bB}}(0)} ds \cdot e_{\by}^{0,m}
		= \eps T_0 
		\big( \widetilde{\widetilde{\bB}}(0) \cdot e_{\bu}^{0,m} \big) \widetilde{\widetilde{\bB}}(0).
	\end{equation*}
	Here $\widetilde{\bB} = (\bB^{\intercal},0)^{\intercal}$, and $\widetilde{\widetilde{\bB}} = \widetilde{\bB} / | \widetilde{\bB} |$ denotes the unit vector in the direction of $\widetilde{\bB}$. 
	Based on the above analysis, taking the norm on both sides of \eqref{eby3} and using the relation \(e_{\by}^{N,m}=e_{\by}^{0,m+1}\), we get 
	\begin{equation}
		\left\| e_{\by}^{0,m+1} \right\| - \left\| e_{\by}^{0,m} \right\| 
		\lesssim \varepsilon \left\| e_{\bu,\parallel}^{0,m} \right\|
		+ \eps N^{-m_{0}}
		+ \varepsilon^{3} \mathfrak{h}^{2},
		\quad 0 \leq m < M,  \label{eby4}
	\end{equation}
	where $e_{\bu,\parallel}^{n,m}$ stands for the component of $e_{\bu}^{n,m}$ parallel to $\widetilde{\widetilde{\bB}}\big(\varepsilon \bx(\tau_{n}^{m})\big)$, defined as
	\begin{equation*}
		e_{\bu,\parallel}^{n,m} := \big( \widetilde{\widetilde{\bB}}^{n,m} \cdot e_{\bu}^{n,m} \big) \widetilde{\widetilde{\bB}}^{n,m}, \quad
		\widetilde{\widetilde{\bB}}^{n,m} := \frac{\widetilde{\bB} ( \varepsilon \bx(\tau_{n}^{m}))}{\left\| \widetilde{\bB} ( \varepsilon \bx(\tau_{n}^{m})) \right\|}.
	\end{equation*}

	Taking the inner product of \eqref{ebub} with $\widetilde{\widetilde{\bB}}^{n+1,m}$ and then computing the norm, we obtain 
	\begin{equation}
		\left\| e_{\bu,\parallel}^{n+1,m} \right\| \lesssim  \left\| \widetilde{\widetilde{\bB}}^{n+1,m} \cdot 
		\big( \mathrm{e}^{\frac{\mh}{2}\bK_{\bx(\tau_{n}^{m})}}
		\mathrm{e}^{\mh (\bK_{\bar{\bx}(\tau_{n}^{m})} - \bK_{\bx(\tau_{n}^{m})})}
		\mathrm{e}^{\frac{\mh}{2}\bK_{\bx(\tau_{n}^{m})}} e_{\bu}^{n,m} \big) \right\| 
		+ \left\| \widetilde{\widetilde{\bB}}^{n+1,m} \cdot \xi_{\bu}^{n,m} \right\|
		+ \left\| \widetilde{\widetilde{\bB}}^{n+1,m} \cdot \zeta_{\bu}^{n,m}(\mathfrak{h}) \right\| + \left\| \widetilde{\widetilde{\bB}}^{n+1,m} \cdot \eta_{\bu}^{n,m} \right\|.
		\label{ebu3}
	\end{equation}
	Using $\widetilde{\widetilde{\bB}}^{n+1,m} = \widetilde{\widetilde{\bB}}^{n,m} + O(\varepsilon^{3} \mathfrak{h})$ and the global error estimate \eqref{gyu}, we have
	\begin{equation*}
		\left\| \widetilde{\widetilde{\bB}}^{n+1,m} \cdot 
		\big( \mathrm{e}^{\frac{\mh}{2}\bK_{\bx(\tau_{n}^{m})}}
		\mathrm{e}^{\mh (\bK_{\bar{\bx}(\tau_{n}^{m})} - \bK_{\bx(\tau_{n}^{m})})}
		\mathrm{e}^{\frac{\mh}{2}\bK_{\bx(\tau_{n}^{m})}} e_{\bu}^{n,m} \big) \right\|
		\lesssim \left\| e_{\bu,\parallel}^{n,m} \right\| + \varepsilon^{4} \mathfrak{h}^{3}.
	\end{equation*} 
	Recalling the analysis of $\xi_{\bu}^n$, its bound is controlled by 
	\begin{equation*}
		- i\varepsilon \mathfrak{h} \int_{0}^{1} \mathrm{e}^{\rho \mathfrak{h} \widehat{\bB}_{mid}} w(\tau_{n}) \int_{0}^{1}\nabla \bE(s_{\sigma}) \, d\sigma \Big[ (\frac{1}{2} - \rho)\mathfrak{h} \varepsilon \bv(\tau_{\rho}^{n}) - \zeta_{\bx}^{n}((1-\rho) \mathfrak{h}) \Big] \, d\rho.
	\end{equation*}
	After taking the inner product with $\widetilde{\widetilde{\bB}}^{n+1,m}$ and applying the Rodrigues rotation formula, we obtain the improved bound 
	\begin{equation*}
		\left\| \widetilde{\widetilde{\bB}}^{n+1,m} \cdot \xi_{\bu}^{n,m} \right\| \lesssim \varepsilon^{3} \mathfrak{h}^{3}.
	\end{equation*}
	From \eqref{M} and \eqref{etabu}, it follows that
	\begin{equation*}
		\left\| \widetilde{\widetilde{\bB}}^{n+1,m} \cdot \zeta_{\bu}^{n,m}(\mathfrak{h}) \right\|  \lesssim \left\| \zeta_{\bu}^{n,m}(\mathfrak{h}) \right\| 
		\lesssim \varepsilon^{4} \mathfrak{h}^{3},
		\quad
		\left\| \widetilde{\widetilde{\bB}}^{n+1,m} \cdot \eta_{\bu}^{n,m} \right\| 
		\lesssim \left\| \eta_{\bu}^{n,m} \right\|
		\lesssim \varepsilon \mathfrak{h} \left\| e_{\by}^{n,m} \right\| 
		+ \varepsilon^{2} \mathfrak{h}^{2} \left\| e_{\bu}^{n,m} \right\|.
	\end{equation*}
	Substituting the above estimates into \eqref{ebu3} yields
	\begin{equation}
		\left\| e_{\bu,\parallel}^{n+1,m} \right\| - \left\| e_{\bu,\parallel}^{n,m} \right\| \lesssim
		\varepsilon \mathfrak{h} \left\| e_{\by}^{n,m} \right\| + \varepsilon^{3} \mathfrak{h}^{3},
		\quad 0 \leq n \leq N-1, \quad 0 \leq m <M.
		\label{ebu4}
	\end{equation} 
	From the recurrence relation in \eqref{ebya}, we readily obtain the estimate for 
	\begin{equation}
		\left\| e_{\by}^{n,m} \right\| \lesssim \varepsilon^2 \mathfrak{h}^2 + \left\| e_{\by}^{0,m} \right\|, 
		\quad 1 \leq n \leq N, \quad 0 \leq m <M. \label{Eby}
	\end{equation}
	Substitute \eqref{Eby} into \eqref{ebu4} and sum over $n$ from $0$ to $N-1$. Using the relation $e_{\bu}^{N,m}=e_{\bu}^{0,m+1}$, we finally arrive at
	\begin{equation}
		\left\| e_{\bu,\parallel}^{0,m+1} \right\| - \left\| e_{\bu,\parallel}^{0,m} \right\|  \lesssim
		\varepsilon \mathfrak{h} \sum_{n=0}^{N-1} \left\| e_{\by}^{0,m} \right\| + \sum_{n=0}^{N-1} \varepsilon^{3} \mathfrak{h}^{3} 
		\lesssim
		\varepsilon  \left\| e_{\by}^{0,m} \right\| + \varepsilon^{3} \mathfrak{h}^{2},
		\quad 0 \leq m <M. \label{ebu5}
	\end{equation}
	
	Summing the inequalities \eqref{eby4} and \eqref{ebu5} leads to
	\begin{equation*}
		\left\| e_{\by}^{0,m+1} \right\| + \left\| e_{\bu,\parallel}^{0,m+1} \right\| - \left\| e_{\by}^{0,m} \right\| - \left\| e_{\bu,\parallel}^{0,m} \right\| 
		\lesssim 
		\varepsilon \big( \left\| e_{\by}^{0,m} \right\| 
		+  \left\| e_{\bu,\parallel}^{0,m} \right\| \big) 
		+ \eps N^{-m_{0}}
		+ \varepsilon^{3} \mathfrak{h}^{2}, 
		\quad 0 \leq m < M.
	\end{equation*}
	Combined with the initial conditions $e_{\by}^{0,0} = e_{\bu,\parallel}^{0,0} = 0$, we apply Gronwall's inequality and conclude
	\begin{equation*}
		\left\| e_{\by}^{0,m} \right\| + \left\| e_{\bu,\parallel}^{0,m} \right\| \lesssim N^{-m_{0}} +
		\varepsilon^2 \mathfrak{h}^{2}, 
		\quad  0 \leq m \leq M. 
	\end{equation*}
	Substituting this bound into \eqref{Eby}, we obtain for interior nodes $1\le n\le N-1$ that
	\begin{equation*}
		\left\| e_{\by}^{n,m} \right\| \lesssim N^{-m_{0}} + \varepsilon^{2} \mathfrak{h}^{2},
		\quad 1 \leq n \leq N-1, \quad 0 \leq m < M.
	\end{equation*}
	Further substituting into \eqref{ebu4} yields
	\begin{equation*}
		\left\| e_{\bu,\parallel}^{n,m} \right\| \lesssim N^{-m_{0}} + \varepsilon^{2} \mathfrak{h}^{2},
		\quad 1 \leq n \leq N-1, \quad 0 \leq m < M.
	\end{equation*}
	Thus the proof of Lemma \ref{uniform} is finished.
\end{proof} 

Returning to the original system \eqref{Rcpd2}, we introduce the substitution $h = \varepsilon \mathfrak{h}$. This yields the second-order uniform error bounds stated in Theorem \ref{th2.1}.
The proof of Theorem \ref{th2.1} is complete.

\section{Numerical experiment}\label{sec4}

Under the assumptions of Theorem \ref{th2.1}, three different strong magnetic field cases under the MOS are chosen for numerical experiments. The condition $\left\|\nabla \bB(0)\right\| \lesssim \varepsilon$ holds for Example \ref{p1}, and $\left\|\nabla \bB(0)\right\| = 0$ for Examples \ref{p2} and \ref{p3}. 
In order to study the motion characteristics of charged particles under these strong magnetic fields and to verify the error convergence results and the energy-preserving property of the SS2-xn scheme \eqref{SS2}, we define the physical velocity of the particles as \(\bv_{\text{phys}} = (v_{\text{phys}}^{1},v_{\text{phys}}^{2},v_{\text{phys}}^{3})^{\intercal} \) and introduce the following error indicators:
\begin{align}
	& erry := \frac{\left\|\by^N - \by(\tau_N)\right\|}{\left\|\by(\tau_N)\right\|},
	\quad erru := \frac{\left\|\bu^N - \bu(\tau_N)\right\|}{\left\|\bu(\tau_N)\right\|}, 
	\quad erru_{\parallel} := \frac{\left\|\bu_{\parallel}^N - \bu_{\parallel}(\tau_N)\right\|}{\left\|\bu_{\parallel}(\tau_N)\right\|},  \nonumber  \\
	&  error = erry + erru_{\parallel},  
	\label{error} \\
	& \varepsilon \text{-} erru = \varepsilon \cdot erru,  \label{erru} \\
	& err_H := \frac{\left\|\bH(\bx^n,\bar{t}^{\, n},\bv^n,w^n) - \bH(\bx^0,\bar{t}^{\,0},\bv^0,w^0)\right\|}{\left\|\bH(\bx^0,\bar{t}^{\,0},\bv^0,w^0)\right\|}, \quad 0 \leq n \leq N. \label{energy}
\end{align}
The reference solutions of the RCPD system are obtained via ‘ode45’, and we solve the system using the SS2-xn scheme until $T=\tau_N=1$ to confirm its second-order uniform error bounds.

As a comparative experiment, we employ the VELPA2 scheme from \cite{Ruili2024}. 
The relativistic system \eqref{Rcpd3} is split into two subsystems
\begin{equation*}
	\mathbf{S}_1 : \begin{pmatrix} \dot{\by}(\tau)\\ \dot{\bu}(\tau) \end{pmatrix} =
	\begin{pmatrix}
		0 \\
		\bF(\bx)\bu(\tau)
	\end{pmatrix}, \quad	
	\mathbf{S}_2: \begin{pmatrix} \dot{\by}(\tau)\\ \dot{\bu}(\tau) \end{pmatrix}=
	\begin{pmatrix}
		\by(\tau) \\
		0
	\end{pmatrix}.
\end{equation*}
By solving these two subsystems exactly, we obtain the solution flows \(\Phi_{\tau}^{\mathbf{S}_1}\) and \(\Phi_{\tau}^{\mathbf{S}_2}\), with their explicit expressions given below: 
\begin{equation*}
	\Phi_{h}^{\mathbf{S}_1} \begin{pmatrix} \by(\tau_{n}) \\ \bu(\tau_{n}) \end{pmatrix} := 
	\begin{pmatrix}
		\by(\tau_{n}) \\
		\mathrm{e}^{h \bF(\bx^n) } \bu(\tau_{n})
	\end{pmatrix}, \quad
	\Phi_{h}^{\mathbf{S}_2} \begin{pmatrix} \by(\tau_{n}) \\ \bu(\tau_{n}) \end{pmatrix} := 
	\begin{pmatrix}
		\by(\tau_{n}) + h \bu(\tau_{n}) \\
		\bu(\tau_{n})
	\end{pmatrix}.
\end{equation*}
From this, the second-order splitting scheme $ \Phi_{h/2}^{\mathbf{S}_1} \circ \Phi_{h}^{\mathbf{S}_2} \circ \Phi_{h/2}^{\mathbf{S}_1} $ is denoted as
\begin{equation} 
	\begin{cases}
		\begin{aligned}
			\displaystyle \by^{n+1}= & \,\, \displaystyle \by^{n} + h \mathrm{e}^{\frac{h}{2}\bF(\bx^n)} \bu^{n},   \\
			\displaystyle \bu^{n+1}= &\,\, \mathrm{e}^{\frac{h}{2}\bF(\bx^{n+1})}
			\mathrm{e}^{\frac{h}{2}\bF(\bx^{n})}
			\bu^{n},  
			\quad 0\leq n < \frac{T}{h}.
		\end{aligned}
	\end{cases} \label{V2}
\end{equation}

\begin{example}\label{p1}
	To begin with, we study the relativistic motion of charged particles in a strong magnetic field obeying the MOS, namely 
	$
	\frac{1}{\varepsilon}\bB(\varepsilon \bx)=\frac{1}{\varepsilon}(1 + \varepsilon \sin(\varepsilon x_{1}),1 + \cos(\varepsilon x_{2}),1 - (\varepsilon \sin(\varepsilon x_{3}))/2)^{\intercal}.
	$ 
	The electric field is defined via $\bE(\bx)=-\nabla U(\bx)$ with $U(\bx)=\dfrac{1}{\sqrt{x_{1}^{2}+x_{2}^{2}+x_{3}^{2}}}$.
	The initial data are set to \( \bx_0 = (1/6,1/8,1/4)^{\intercal} \), \(\bar{t}_0 = 0\), \( \bv_0 = (1/5,1/3,1/2)^{\intercal} \) and \(w_0 = i\sqrt{1+\left\|\bv_0\right\|^2}\).
\end{example}

Figs.~\ref{spatial_1}-\ref{velocity_1} display the two-dimensional (2D) and three-dimensional (3D) phase portraits of position and physical velocity for long-time particle motion simulated by the SS2-xn scheme.
The relative errors of the SS2-xn scheme \eqref{SS2} and the VELPA2 scheme \eqref{V2} are presented in Figs.~\ref{S_1}-\ref{V_1}, respectively, while the energy errors are shown in Fig.~\ref{E_1}. A comparison reveals the following:
\begin{enumerate}
	\item Since the Coulomb electric field is a radial field centered at the origin, its electric field vector is centrally symmetric about the origin. When the particle passes through the vicinity of the origin, the direction of the electric field reverses, and the drift direction reverses accordingly, causing the trajectory to turn around at the origin and oscillate back and forth. As shown in Fig.~\ref{spatial_1}, as $\eps$ decreases (from $1/2^3$ to $1/2^5$), the magnetic field strength increases, the guiding-center motion becomes more stable, and the trajectory eventually evolves into an approximately S-shaped symmetric structure.
	Within the relativistic framework (with the speed of light $c=1$), the physical velocity $\bv_{\text{phys}}$ and momentum $\bv$ satisfy the relation
	$\bv_{\text{phys}} = \frac{\bv}{\gamma} 
	= \frac{\bv}{\sqrt{1+\left\|\bv\right\|^2}}.$ 
	The physical velocity approaches the speed of light $1$ as the momentum tends to infinity. Fig.~\ref{velocity_1} demonstrates that the physical velocity of the particle never exceeds $1$ at different simulation times $T$. 
	Collectively, these numerical results verify the validity and reliability of the SS2-xn scheme in long-time simulations.

	\item For $\varepsilon\in(0,1)$, the SS2-xn scheme exhibits second-order convergence in $h$ for both $\by$ and the parallel component $\bu_{\parallel}$, and the error bound is independent of the small parameter $\varepsilon$, i.e., a second-order uniform error bound. Moreover, the error bound of the SS2-xn scheme for $\bu$ is indeed $\mathcal{O}(\varepsilon^{-1}h^2)$ (see the middle panel of Fig.~\ref{S_1}). This verifies the theoretical error results of Theorem \ref{th2.1} and demonstrates that the obtained error bounds are optimal.
	
	\item From the first two panels of Fig.~\ref{V_1}, the VELPA2 scheme also achieves second-order convergence in $h$ for $\by$ and the parallel component $\bu_{\parallel}$, but it fails to yield a uniform error bound. The right panel of Fig.~\ref{V_1} more clearly illustrates the dependence of the error bound of VELPA2 on the small parameter $\varepsilon$: as $\varepsilon$ decreases $(\eps \to 0)$, the error increases with a slope of approximately $\varepsilon^{-1.5}$.
	
	\item To verify the energy-preserving property of the two schemes, we compute the energy error for different values of $\varepsilon$ with step sizes $h = 1/2^8$ and $1/2^{10}$ up to $T = 1000$. The results in Fig.~\ref{E_1} indicate that both schemes maintain energy stability over long-time integration.
\end{enumerate}

%%空间相图
\begin{figure}[t!] 
	\centering
	\subfigure{\includegraphics[height=4cm]{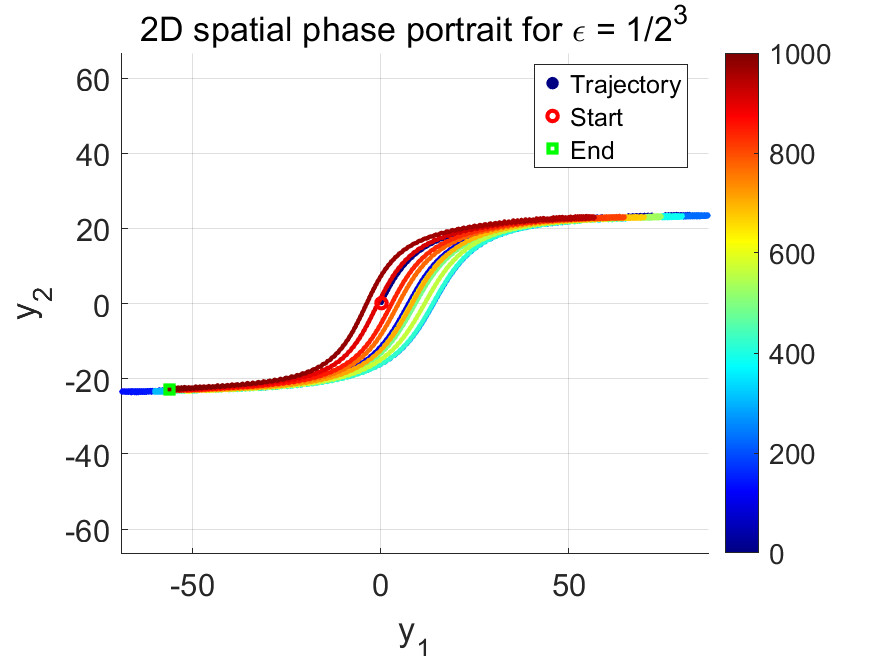}}
	\subfigure{\includegraphics[height=4cm]{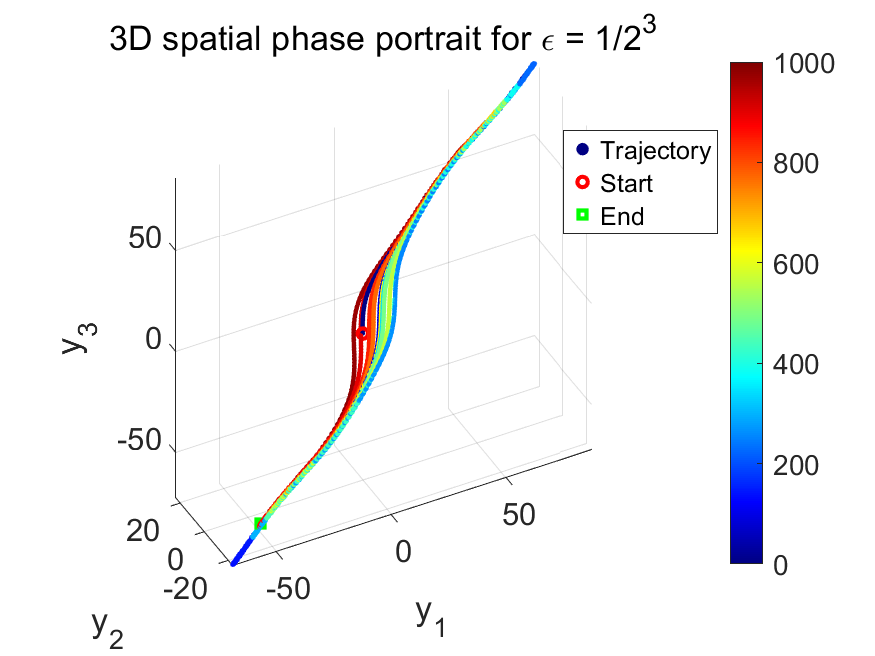}}\\
    \subfigure{\includegraphics[height=4cm]{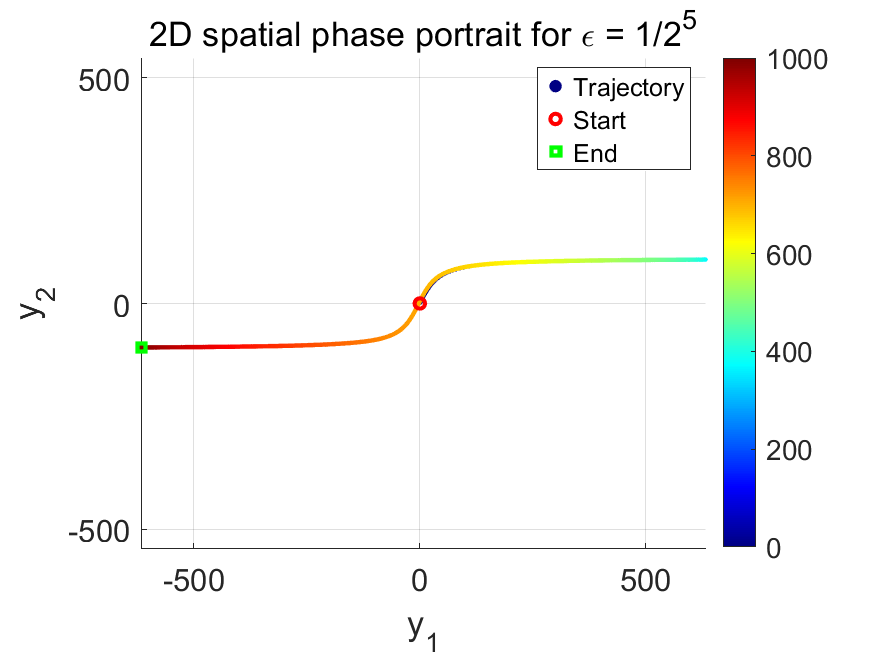}}
    \subfigure{\includegraphics[height=4cm]{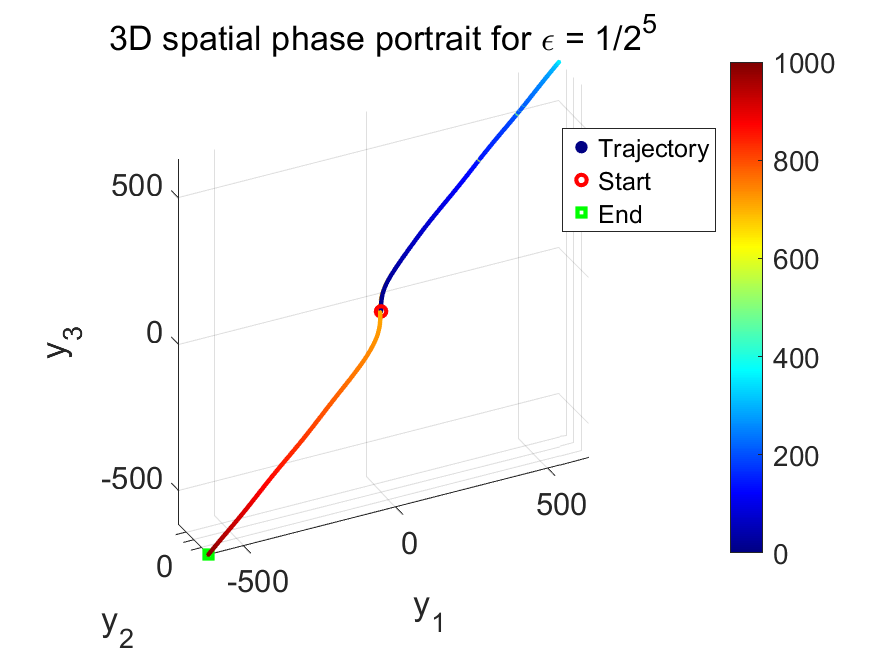}}
	\caption{Example \ref{p1}. 2D (left panels) and 3D (right panels) spatial evolution plots of SS2-xn with \(h = 1/2^8\), \(T = 1000\), and \(\varepsilon\) varying.}
	\label{spatial_1}
\end{figure}

%%速度相图
\begin{figure}[t!] 
	\centering
	\subfigure{\includegraphics[height=4cm]{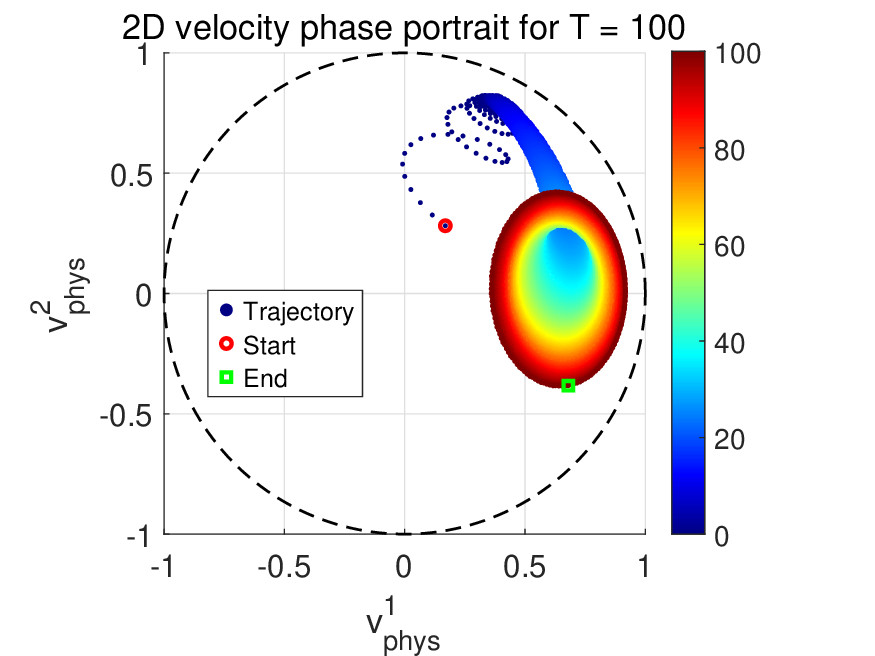}}
	\subfigure{\includegraphics[height=4cm]{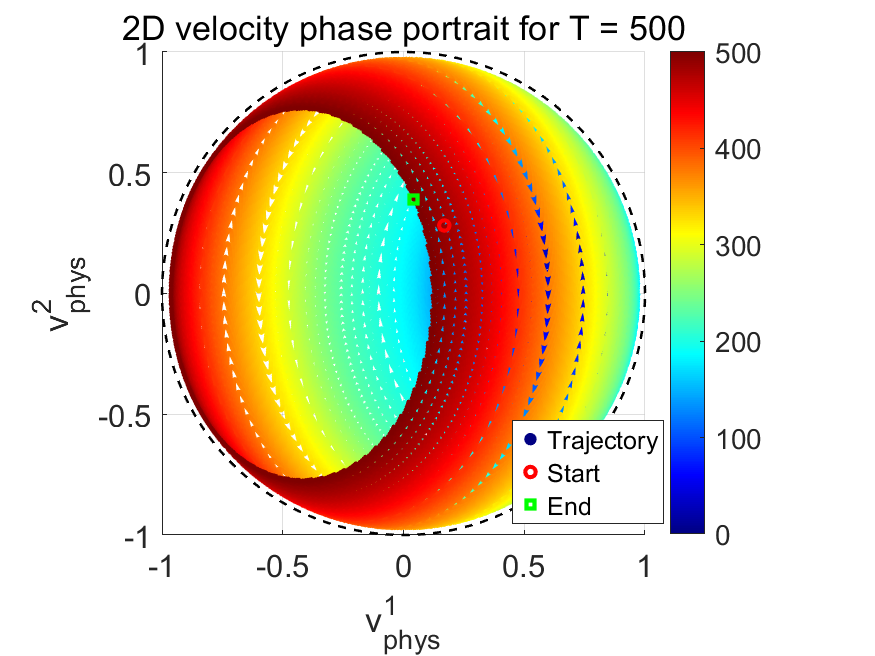}}
	\subfigure{\includegraphics[height=4cm]{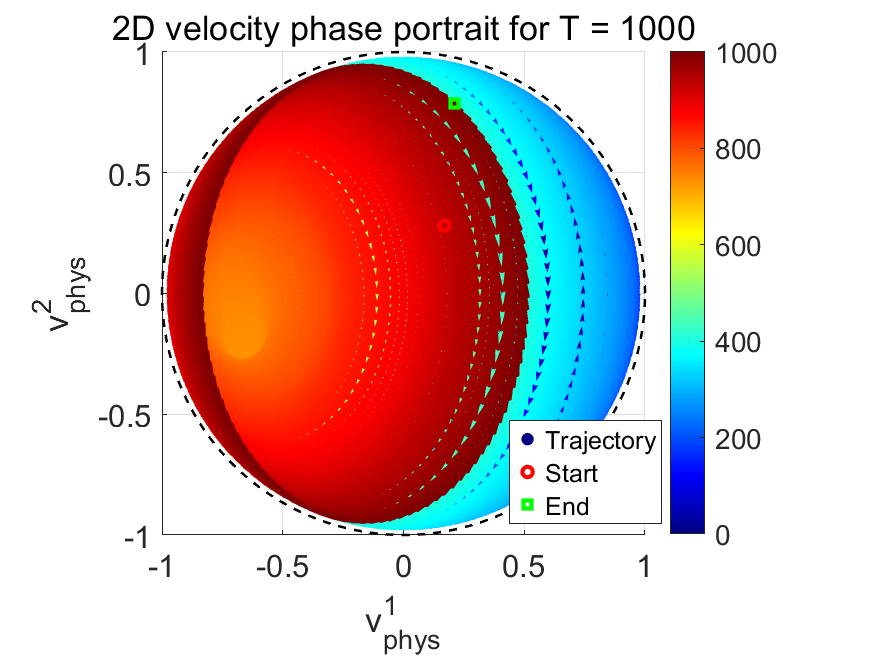}}
    \subfigure{\includegraphics[height=4cm]{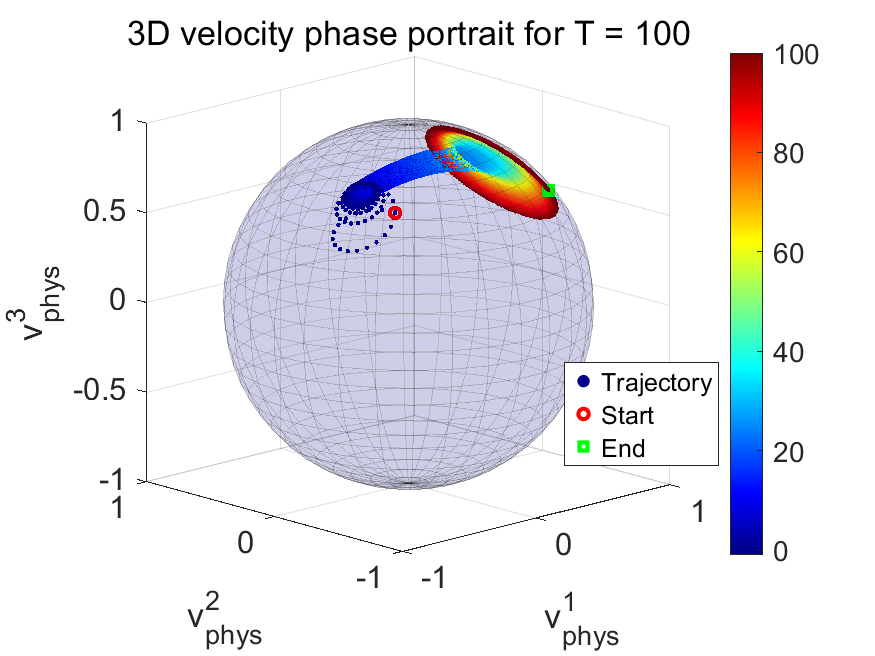}}
	\subfigure{\includegraphics[height=4cm]{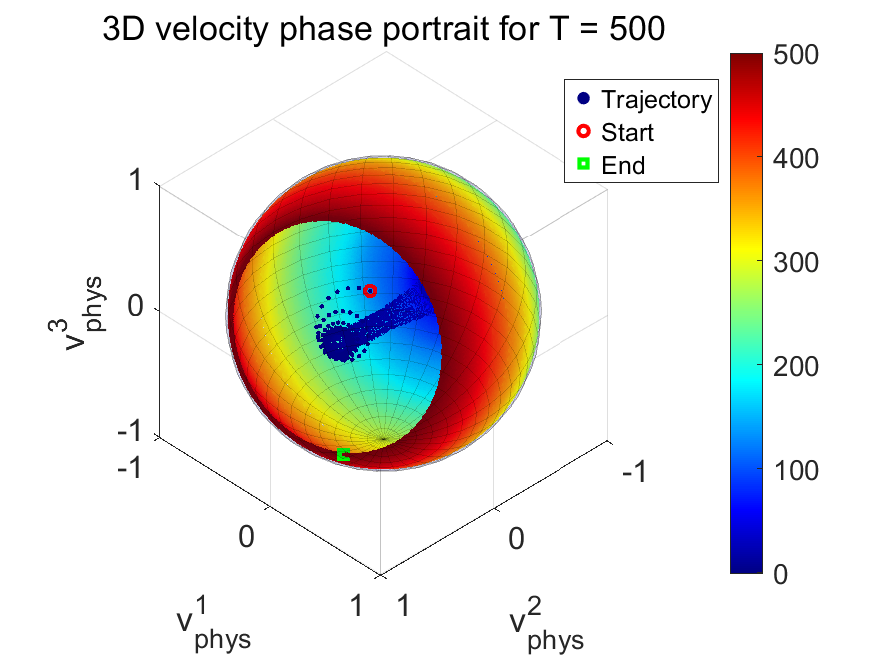}}
	\subfigure{\includegraphics[height=4cm]{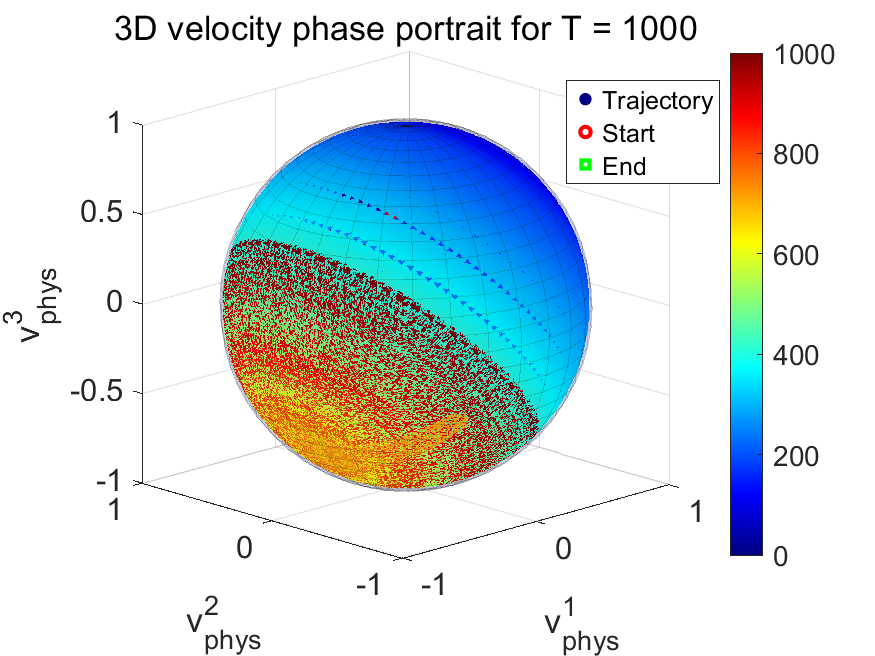}}
	\caption{Example \ref{p1}. Physical velocity evolution of SS2-xn in 2D (top panels) and 3D (bottom panels) for \(h = 1/2^8\), \(\varepsilon = 1/2^5\), and varying \(T\).}
	\label{velocity_1}
\end{figure}

%% SS2-xn 例1
\begin{figure}[t!] 
	\centering
	\subfigure{\includegraphics[height=4cm]{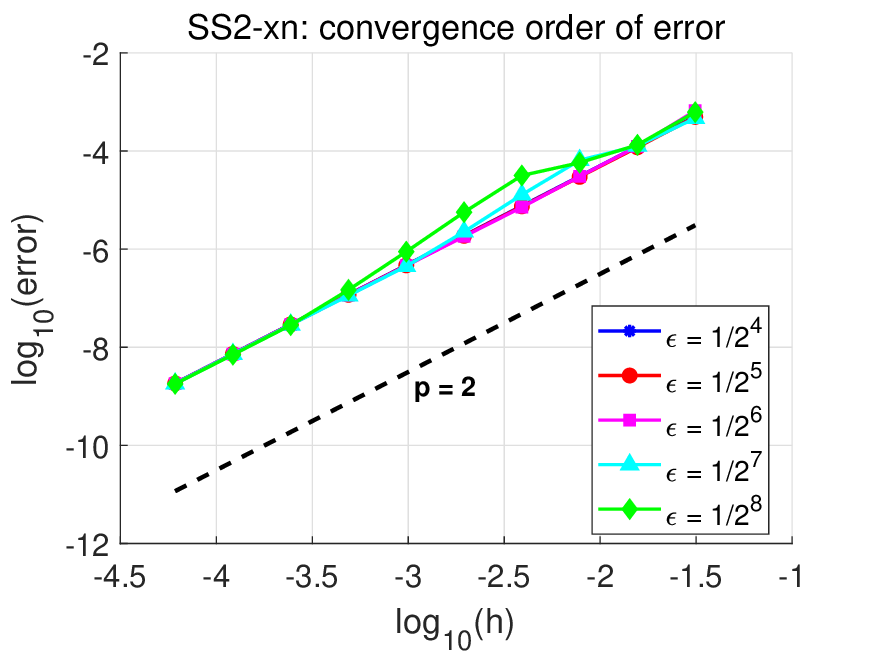}}
	\subfigure{\includegraphics[height=4cm]{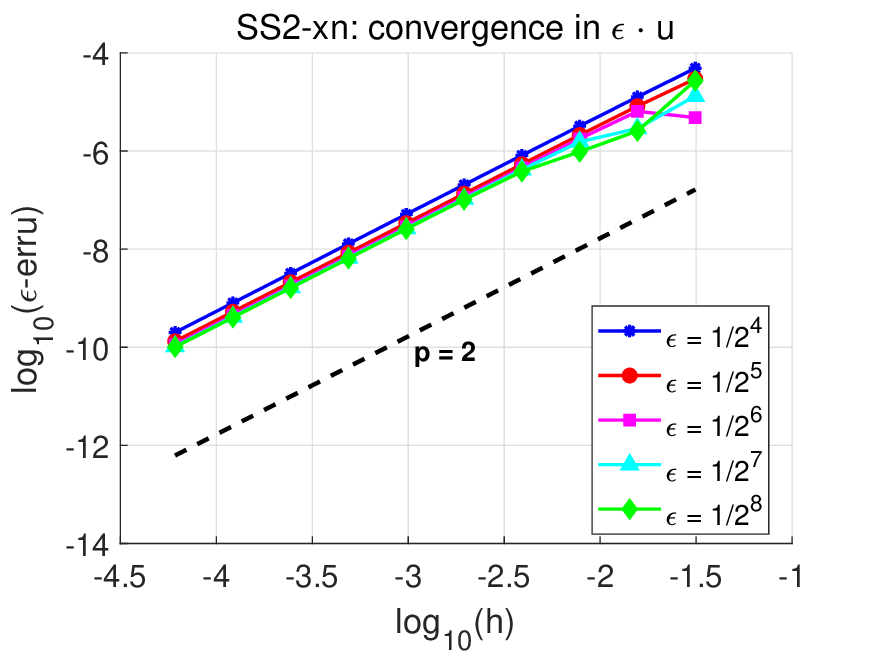}}
	\subfigure{\includegraphics[height=4cm]{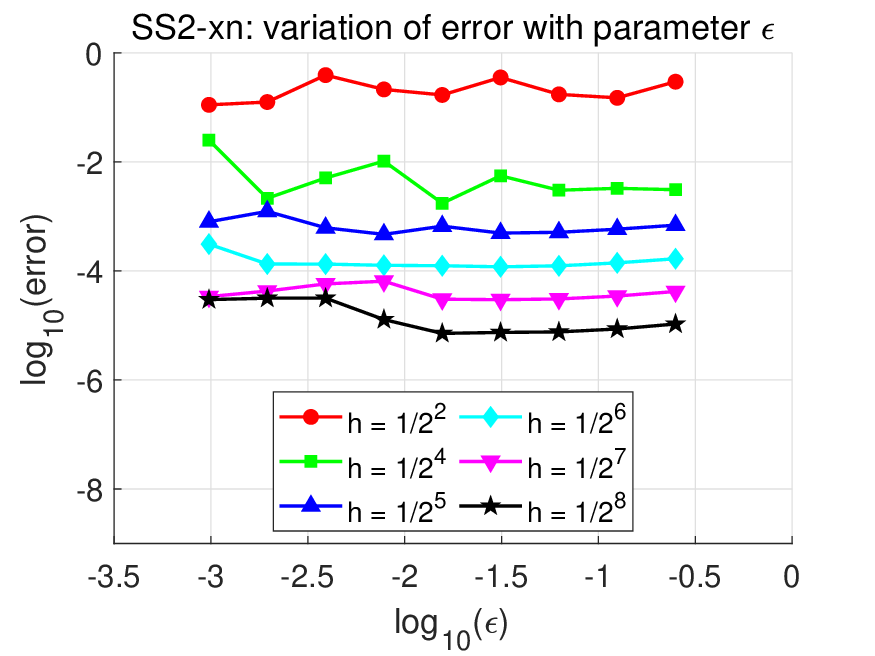}}
	\caption{Example \ref{p1}. SS2-xn errors: \eqref{error} and \eqref{erru} (left and middle panels) with \( h = 1/2^{k} \) for \( k=5,\dots,14 \) and varying \( \varepsilon \); \eqref{error} (right panel) with \( \varepsilon = 1/2^{k} \) for \( k=2,\dots,10 \) and varying \( h \).}
	\label{S_1}
\end{figure}

%% VELPA 例1
\begin{figure}[t!] 
	\centering
	\subfigure{\includegraphics[height=4cm]{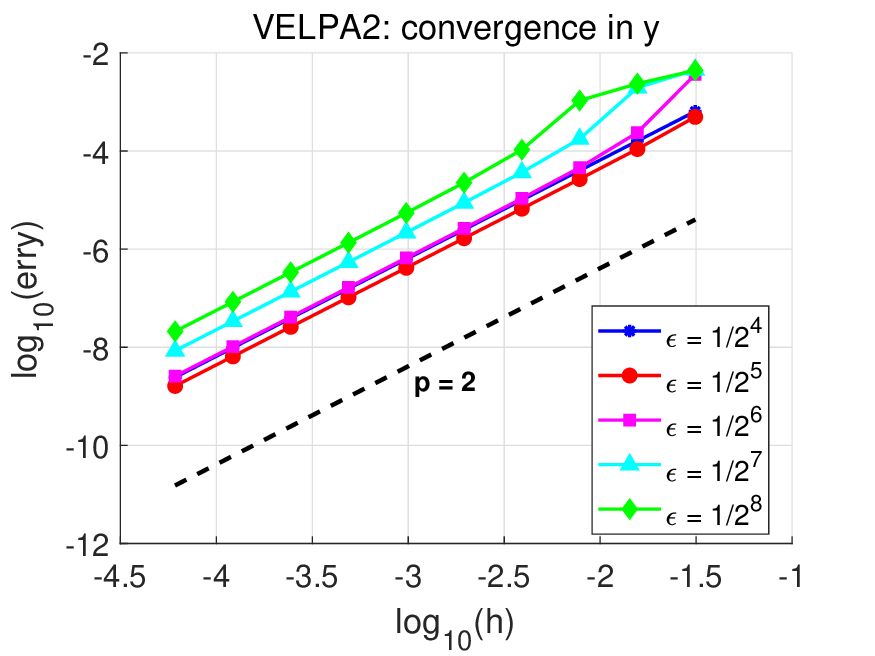}}
	\subfigure{\includegraphics[height=4cm]{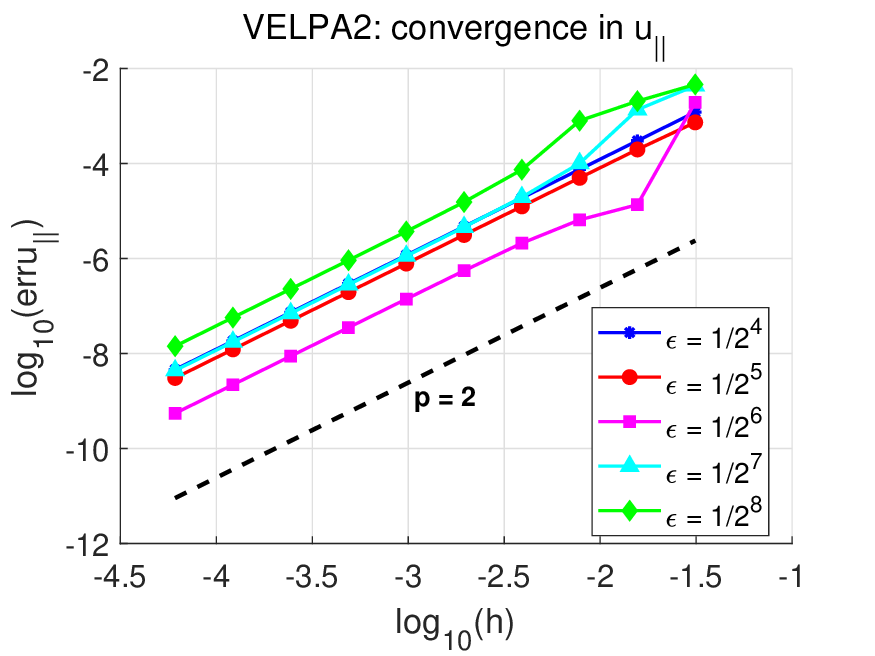}}
	\subfigure{\includegraphics[height=4cm]{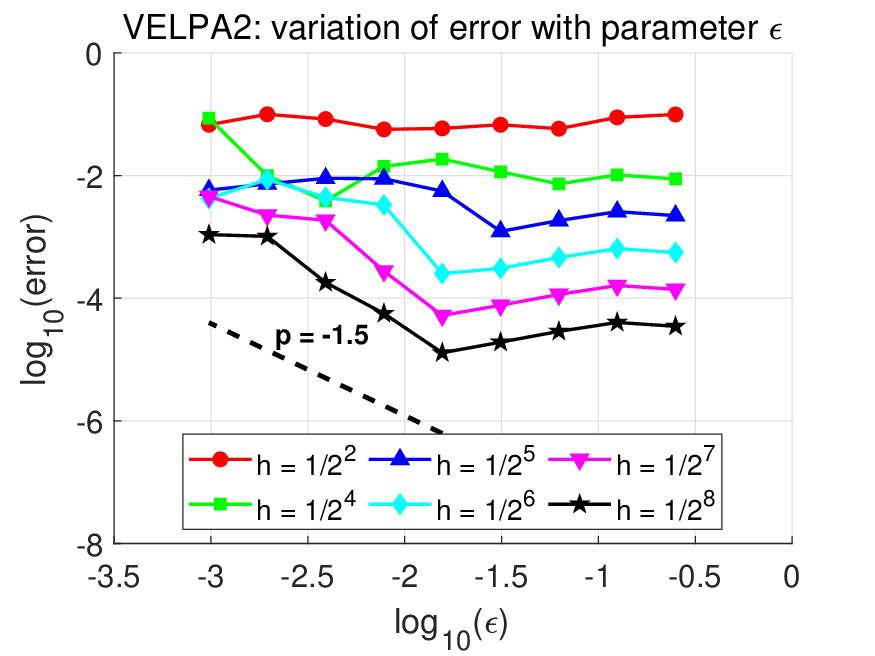}}
	\caption{Example \ref{p1}. VELPA2 errors: \(erry\) and \(erru_{\parallel}\) (left and middle panels) with \( h = 1/2^{k} \) for \( k=5,\dots,14 \) and varying \( \varepsilon \); \eqref{error} (right panel) with \( \varepsilon = 1/2^{k} \) for \( k=2,\dots,10 \) and varying \( h \).}
	\label{V_1}
\end{figure}

%%能量图
\begin{figure}[t!] 
	\centering
	\subfigure{\includegraphics[height=4cm]{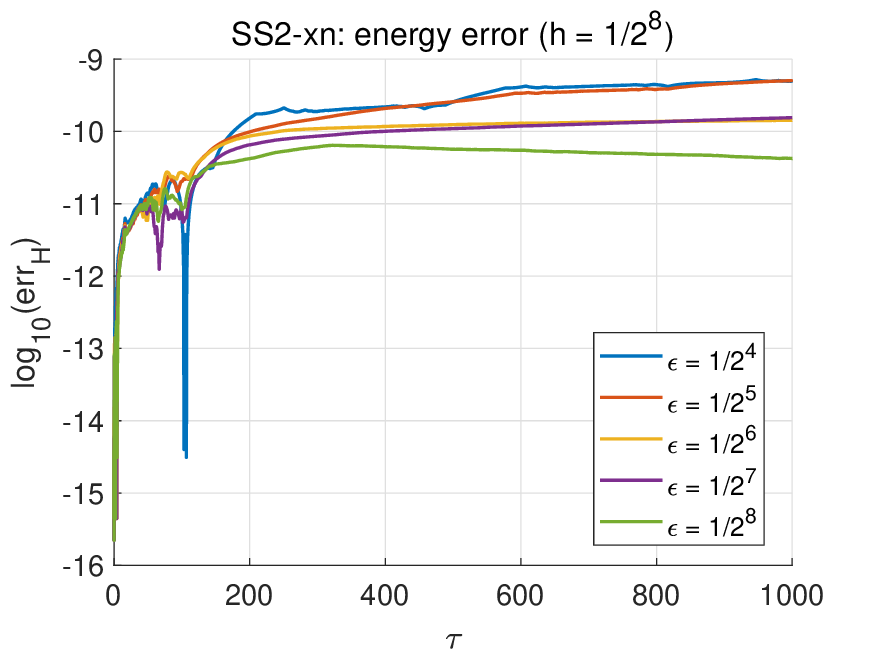}}
	\subfigure{\includegraphics[height=4cm]{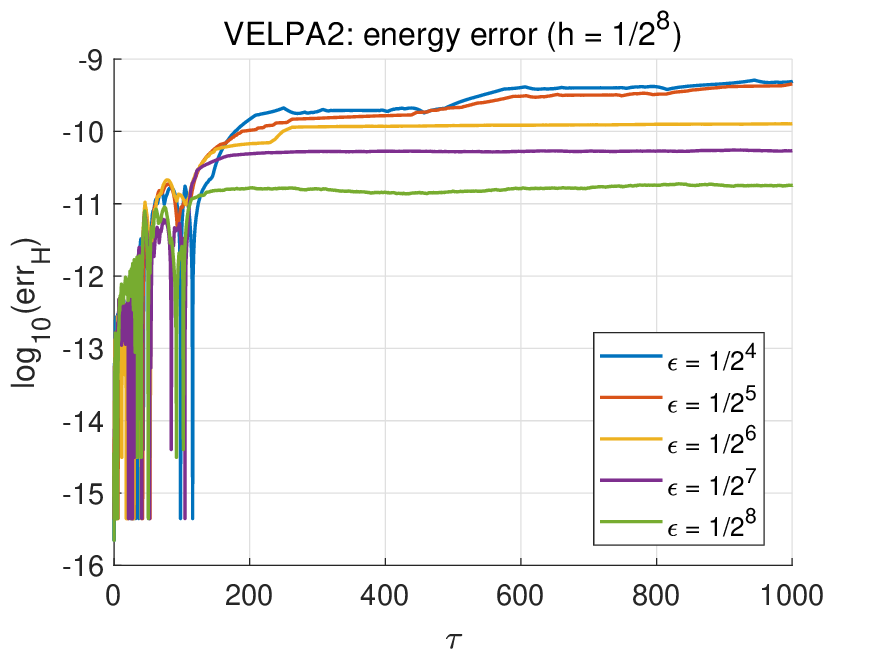}}\\
	\subfigure{\includegraphics[height=4cm]{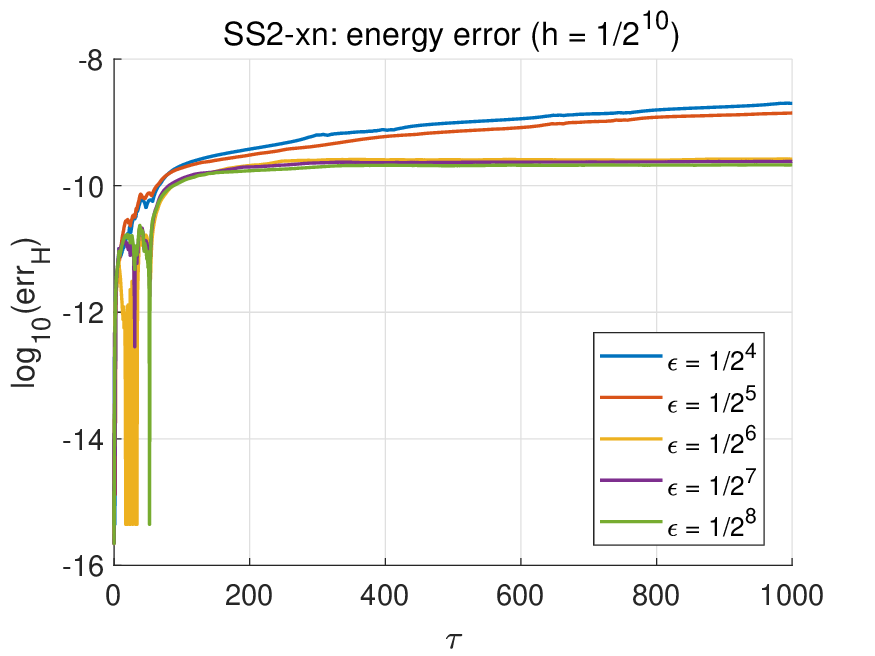}}
	\subfigure{\includegraphics[height=4cm]{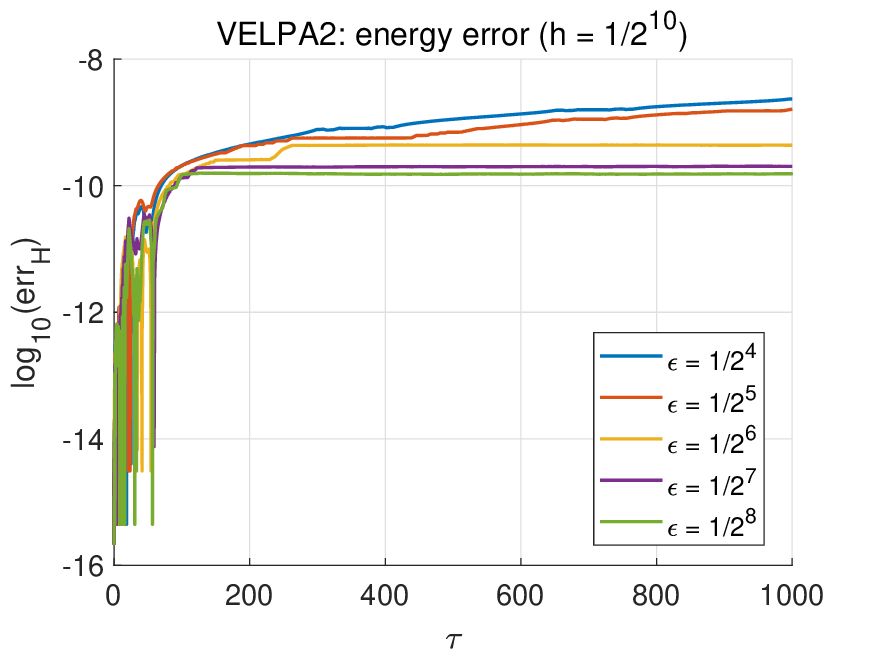}}
	\caption{Example \ref{p1}. The energy errors of SS2-xn (left panels) and VELPA2 (right panels).}
	\label{E_1}
\end{figure}

\begin{example}\label{p2}
	Next, we study the relativistic motion of a charged particle in a strong magnetic field with the MOS:
	$
	\frac{1}{\varepsilon}\bB(\varepsilon \bx)=\frac{1}{\varepsilon}(1 -  \cos(\varepsilon x_{2})  /2,	1 +  \cos(\varepsilon x_{3}) /2 ,1 + \cos(\varepsilon x_{1}) /2)^{\intercal}.
	$ 
	The potential \(U(\bx) = -\sin(x_{1}/2)\sin(x_{2})\sin(x_{3})\) yields the electric field \(\bE = -\nabla U\).
	The initial values are taken as before.
\end{example}

Figs.~\ref{spatial_2}-\ref{velocity_2} illustrate the time evolution of the position and physical velocity of the particle computed by the SS2-xn scheme.
Figs.~\ref{S_2}-\ref{V_2} show the relative errors of the two schemes for Example \ref{p2}, and Fig.~\ref{E_2} presents the energy errors. 
Example \ref{p2} adopts an asymmetric periodic electrostatic field. Under this electric field, the guiding center drifts continuously along a fixed direction determined by the averaged electric field gradient without turning back. Consequently, as $\varepsilon$ decreases (see Fig.~\ref{spatial_2}), the trajectory is gradually stretched and eventually forms a narrow strip-shaped structure. Fig.~\ref{velocity_2} shows that even after long-time evolution with $T=1000$, the physical velocity of the particle remains less than the speed of light $1$, which is consistent with the theoretical prediction. 
Fig.~\ref{S_2} further verifies that for the SS2-xn scheme, the error bound in the $\bu$ direction is $\mathcal{O}(\varepsilon^{-1}h^2)$, while for $\by$ and the parallel component of $\bu$, the scheme indeed exhibits a second-order uniform error bound in $h$ that is independent of the small parameter $\varepsilon$. 
Fig.~\ref{V_2} indicates that for the VEPLA2 scheme, the error bounds in $\by$ and $\bu_{\parallel}$ are of second order in $h$ but depend on $\varepsilon^{-1}$. 
Comparing the two error figures, the proposed scheme shows a clear advantage in terms of error behavior. 
Moreover, both schemes maintain good energy preservation over long-time numerical simulations (see Fig.~\ref{E_2}).

%%空间相图
\begin{figure}[t!] 
	\centering
	\subfigure{\includegraphics[height=4cm]{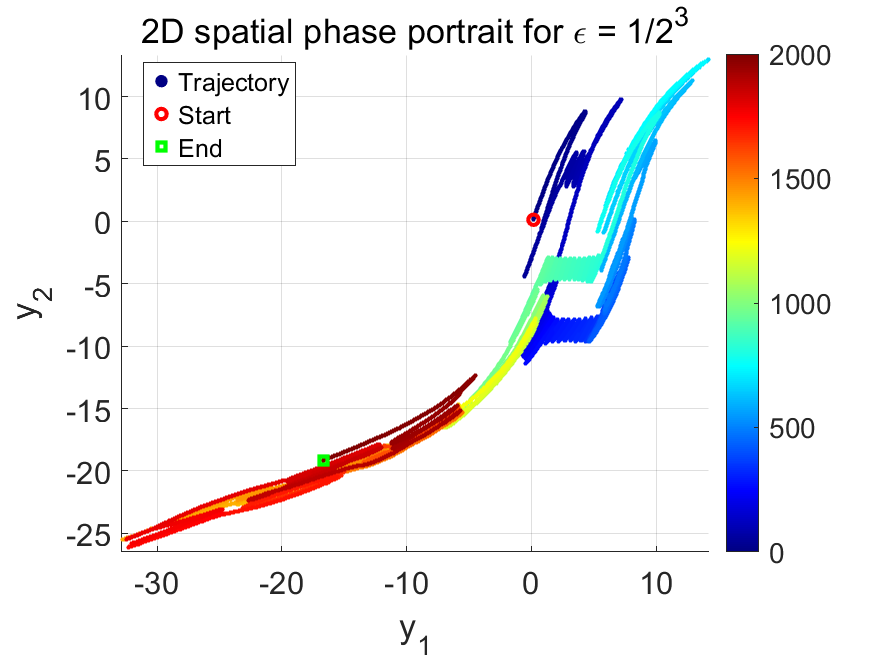}}
	\subfigure{\includegraphics[height=4cm]{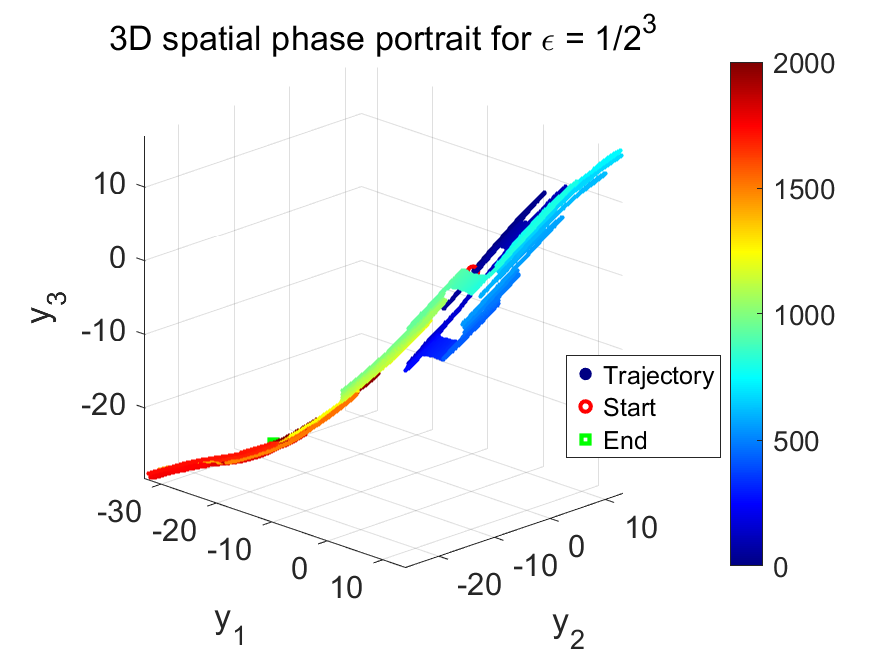}}\\
	\subfigure{\includegraphics[height=4cm]{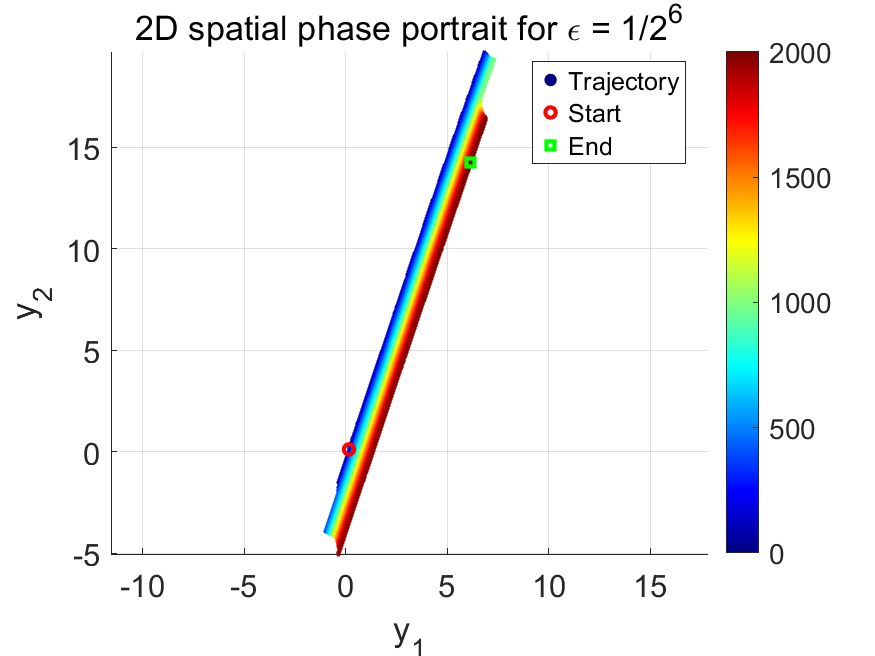}}
	\subfigure{\includegraphics[height=4cm]{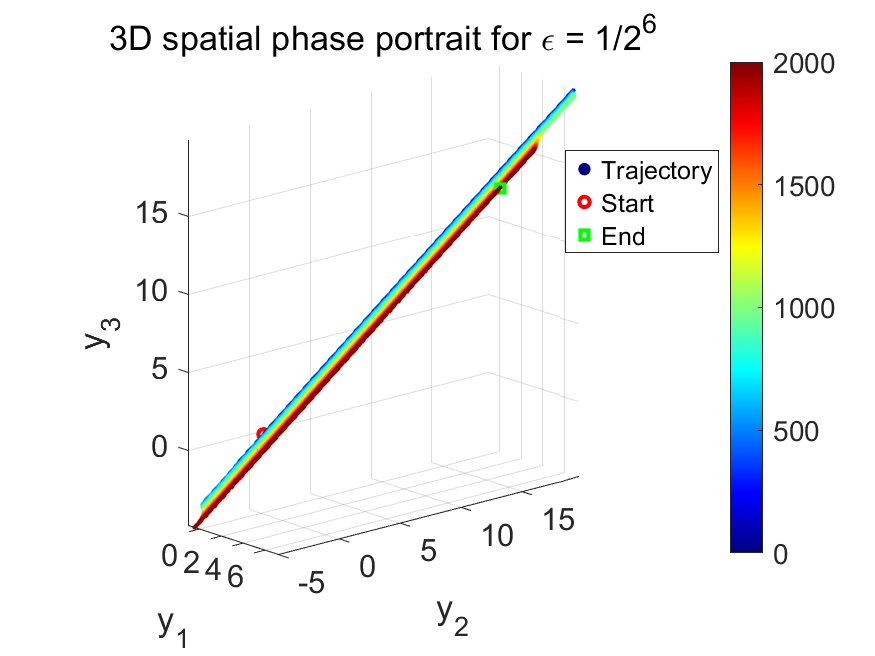}}
	\caption{Example \ref{p2}. 2D (left panels) and 3D (right panels) spatial evolution plots of SS2-xn with \(h = 1/2^8\), \(T = 2000\), and \(\varepsilon\) varying.}
	\label{spatial_2}
\end{figure}

%%速度相图
\begin{figure}[t!] 
	\centering
	\subfigure{\includegraphics[height=4cm]{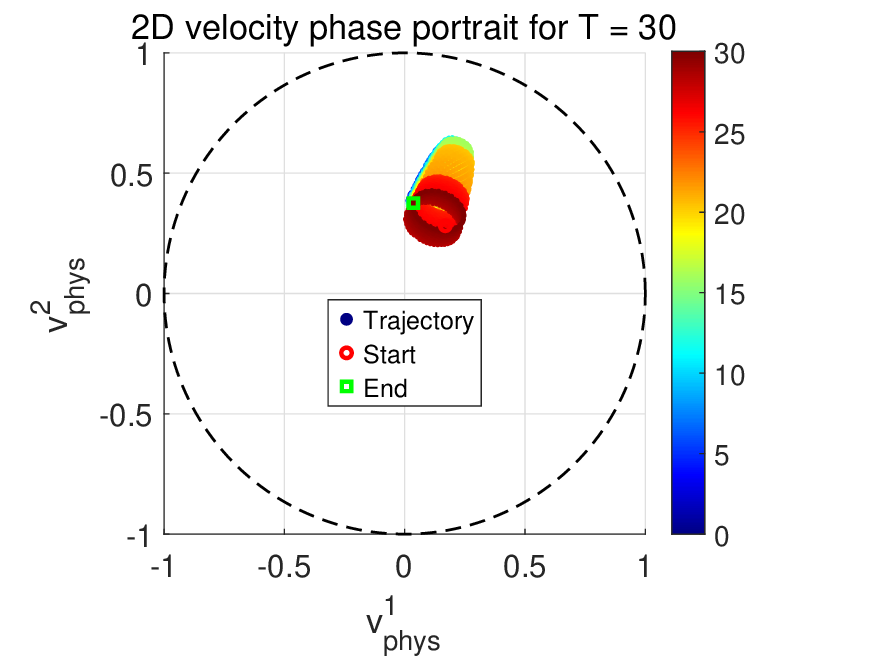}}
	\subfigure{\includegraphics[height=4cm]{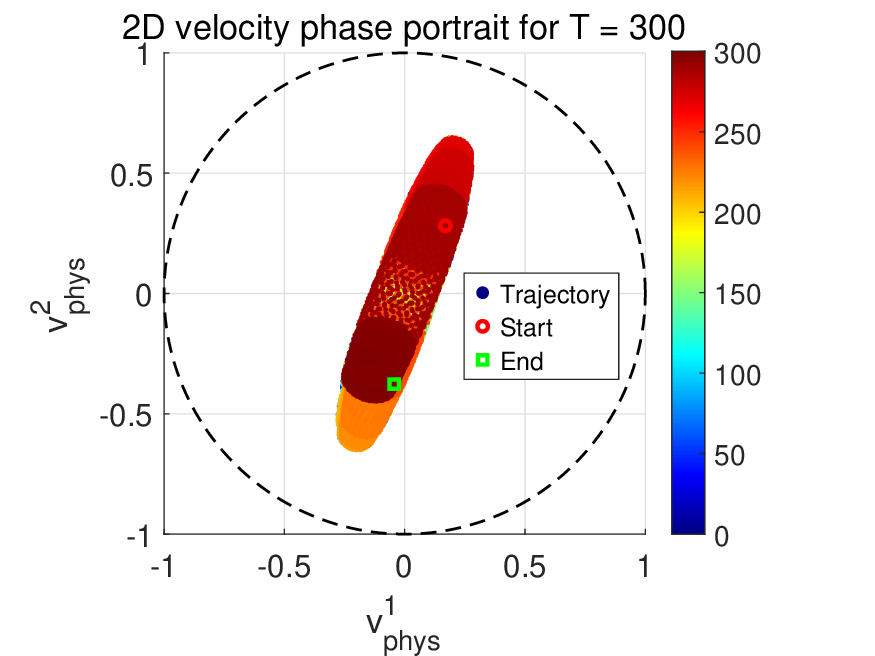}}
	\subfigure{\includegraphics[height=4cm]{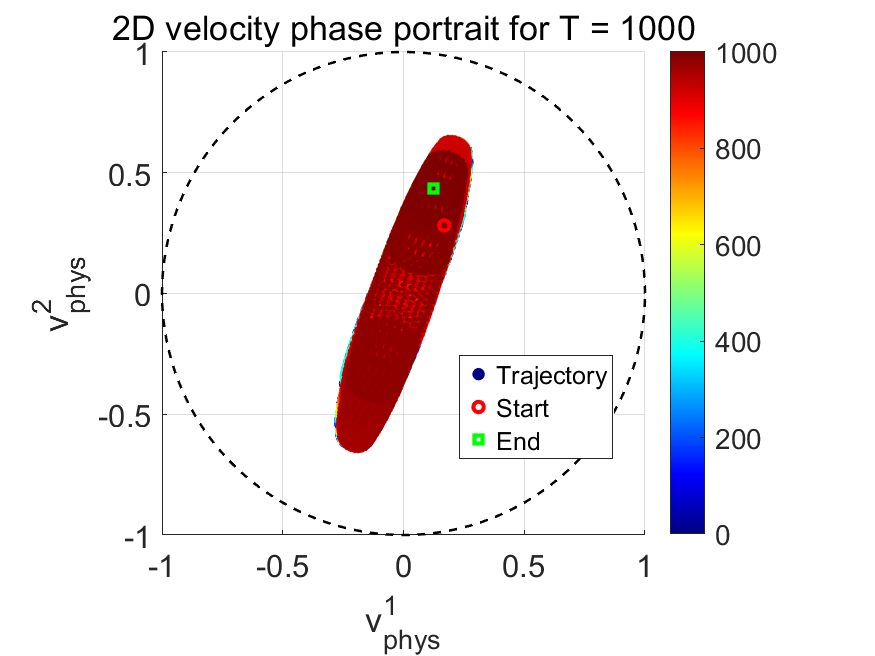}}
	\subfigure{\includegraphics[height=4cm]{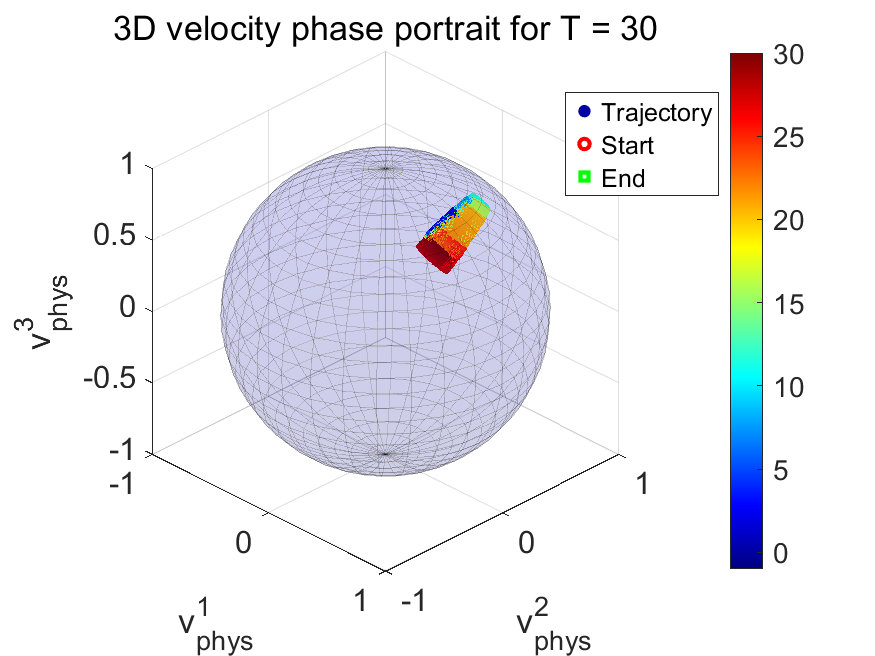}}
	\subfigure{\includegraphics[height=4cm]{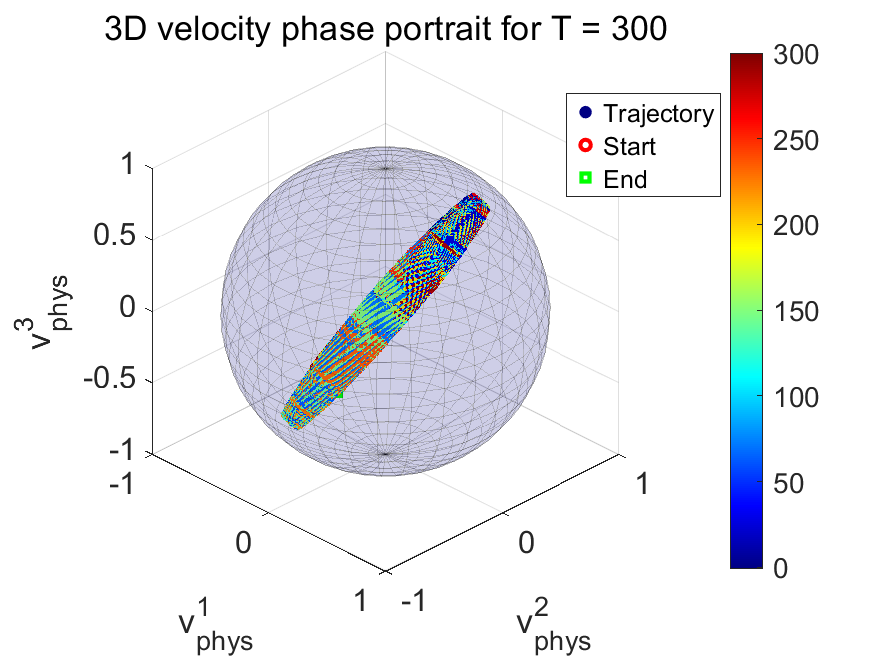}}
	\subfigure{\includegraphics[height=4cm]{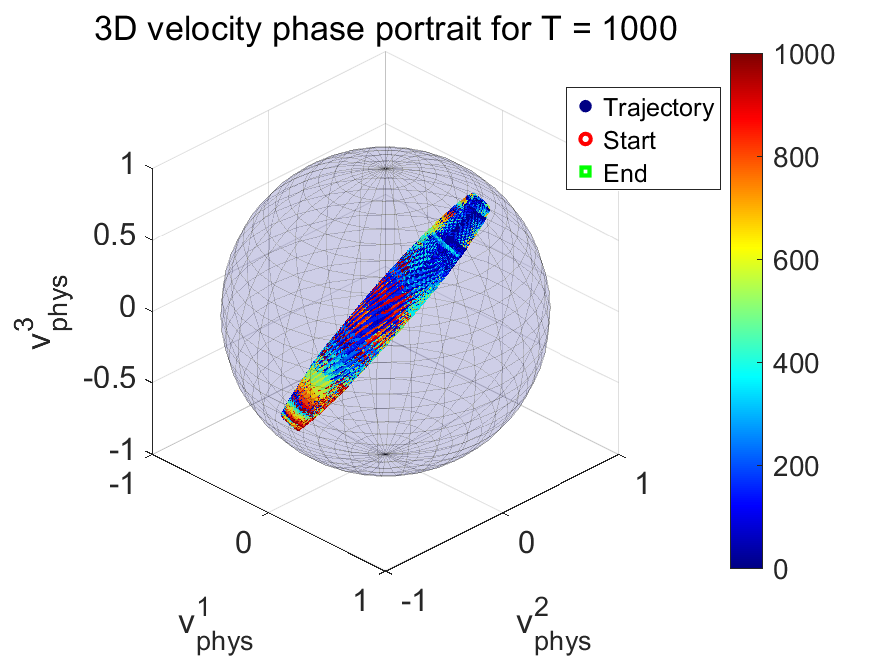}}
	\caption{Example \ref{p2}. Physical velocity evolution of SS2-xn in 2D (top panels) and 3D (bottom panels) for \(h = 1/2^8\), \(\varepsilon = 1/2^5\), and varying \(T\).}
	\label{velocity_2}
\end{figure}

%% SS2-xn 例2
\begin{figure}[t!] 
	\centering
	\subfigure{\includegraphics[height=4cm]{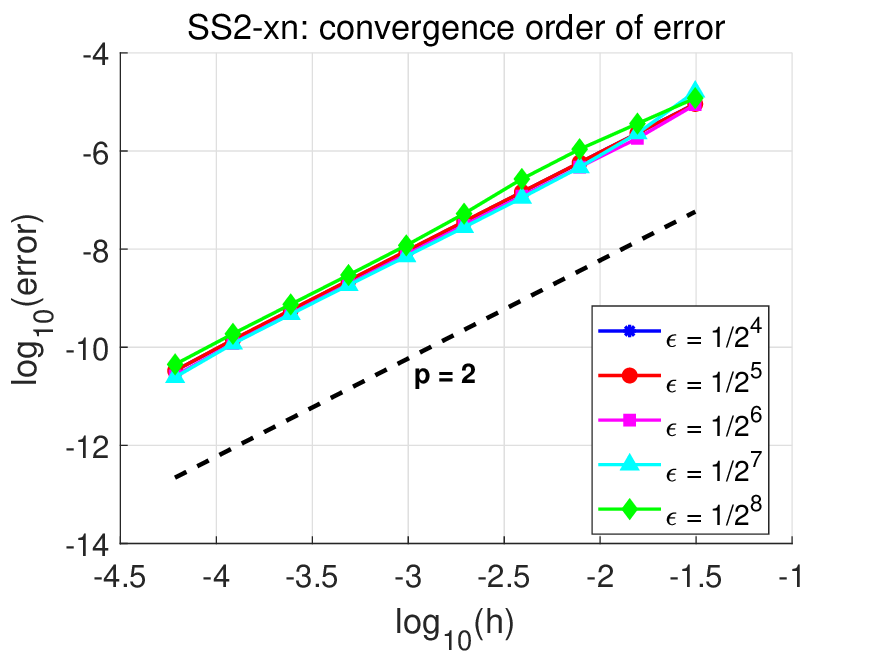}}
	\subfigure{\includegraphics[height=4cm]{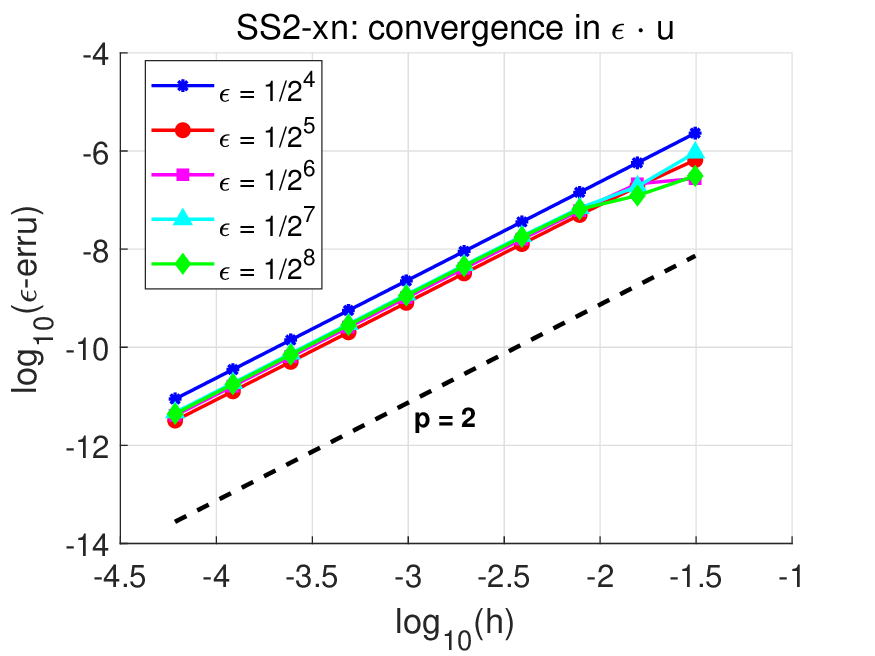}}
	\subfigure{\includegraphics[height=4cm]{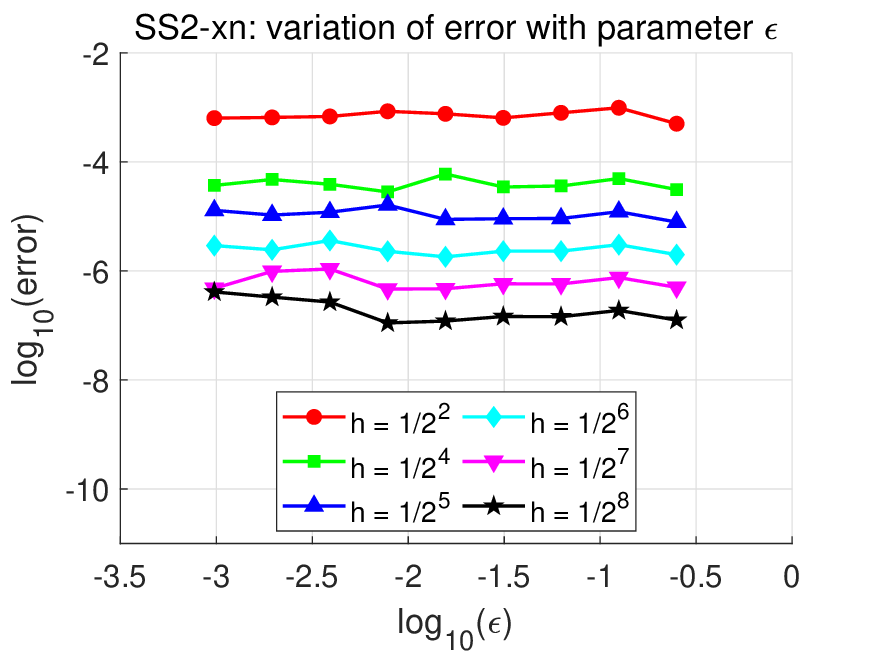}}
	\caption{Example \ref{p2}. SS2-xn errors: \eqref{error} and \eqref{erru} (left and middle panels) with \( h = 1/2^{k} \) for \( k=5,\dots,14 \) and varying \( \varepsilon \); \eqref{error} (right panel) with \( \varepsilon = 1/2^{k} \) for \( k=2,\dots,10 \) and varying \( h \).}
	\label{S_2}
\end{figure}

%% VELPA2 例2
\begin{figure}[t!] 
	\centering
	\subfigure{\includegraphics[height=4cm]{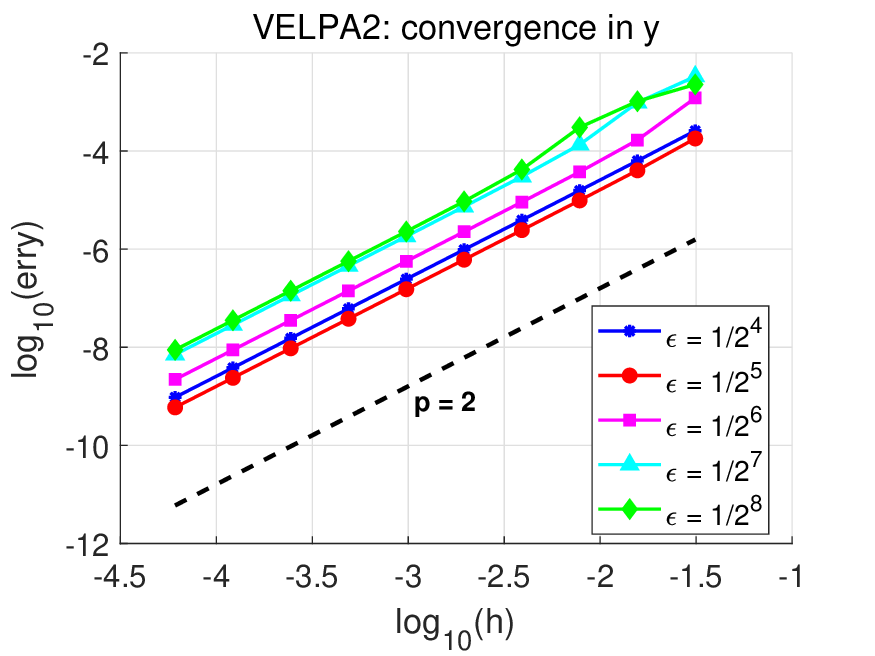}}
	\subfigure{\includegraphics[height=4cm]{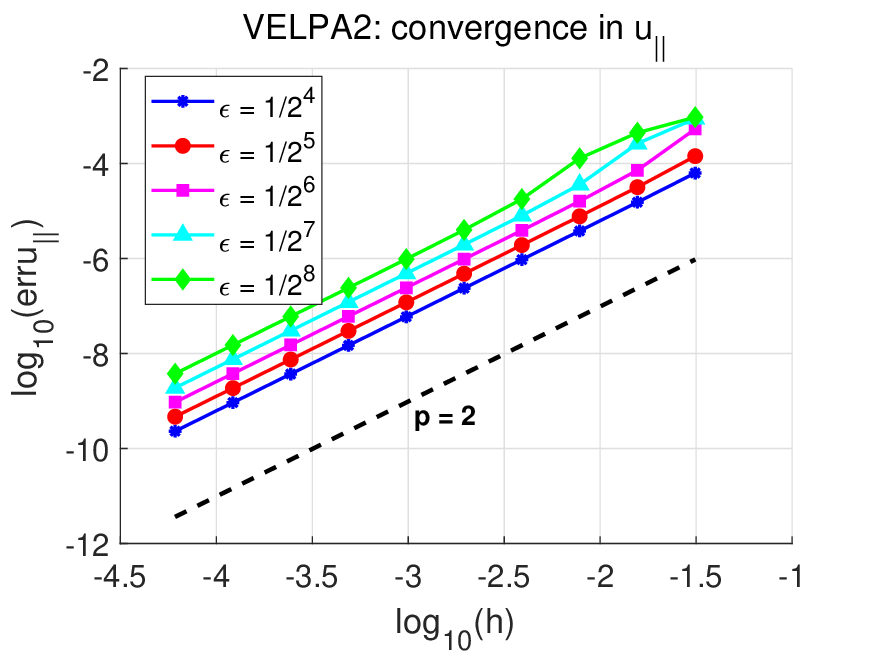}}
	\subfigure{\includegraphics[height=4cm]{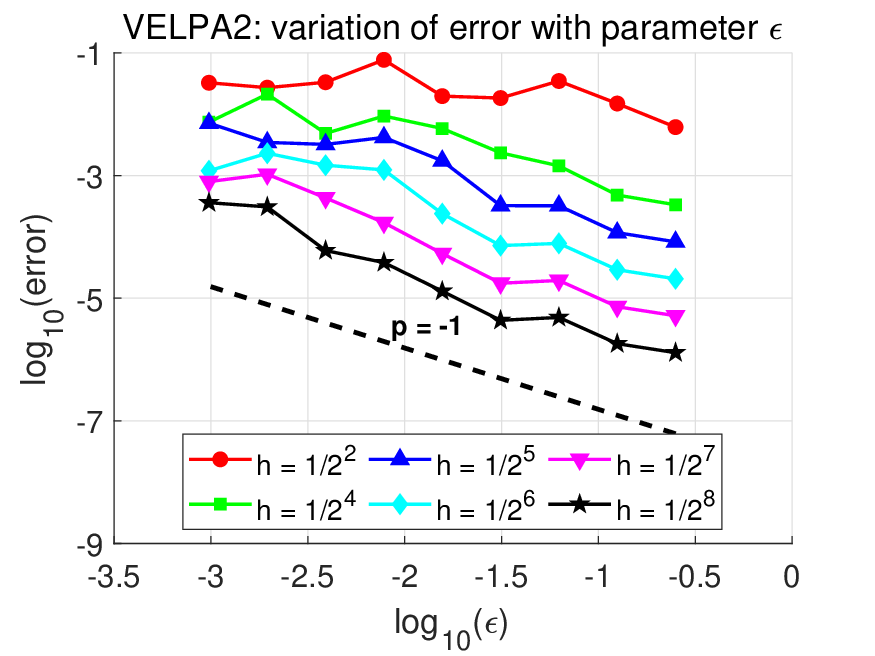}}
	\caption{Example \ref{p2}. VELPA2 errors: \(erry\) and \(erru_{\parallel}\) (left and middle panels) with \( h = 1/2^{k} \) for \( k=5,\dots,14 \) and varying \( \varepsilon \); \eqref{error} (right panel) with \( \varepsilon = 1/2^{k} \) for \( k=2,\dots,10 \) and varying \( h \).}
	\label{V_2}
\end{figure}

%%能量图
\begin{figure}[t!] 
	\centering
	\subfigure{\includegraphics[height=4cm]{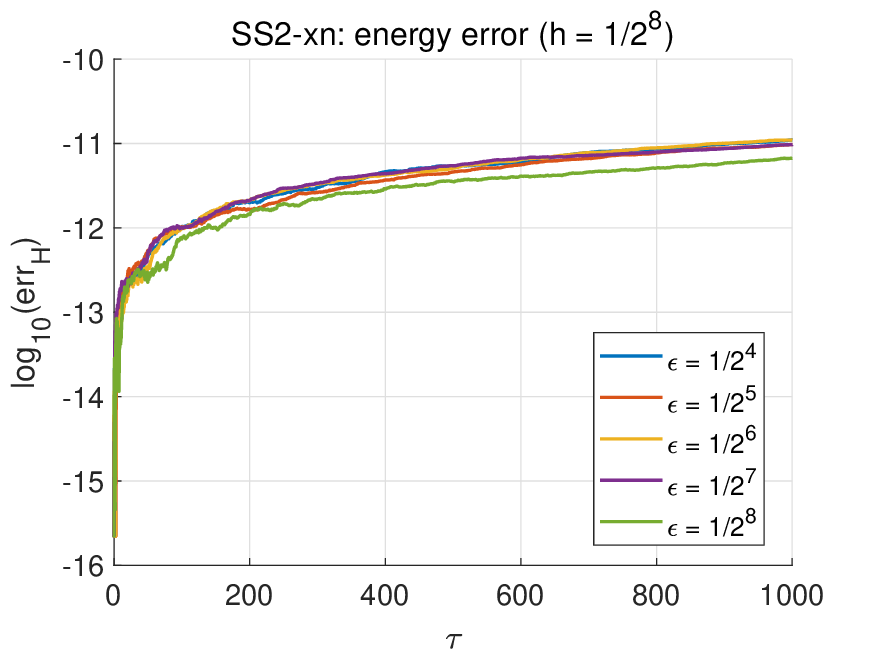}}
	\subfigure{\includegraphics[height=4cm]{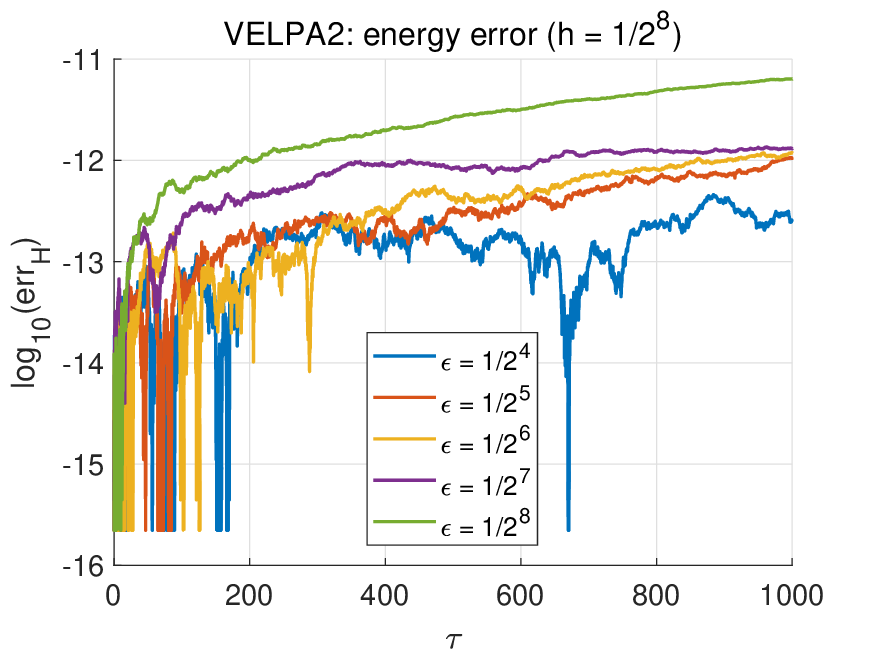}}\\
	\subfigure{\includegraphics[height=4cm]{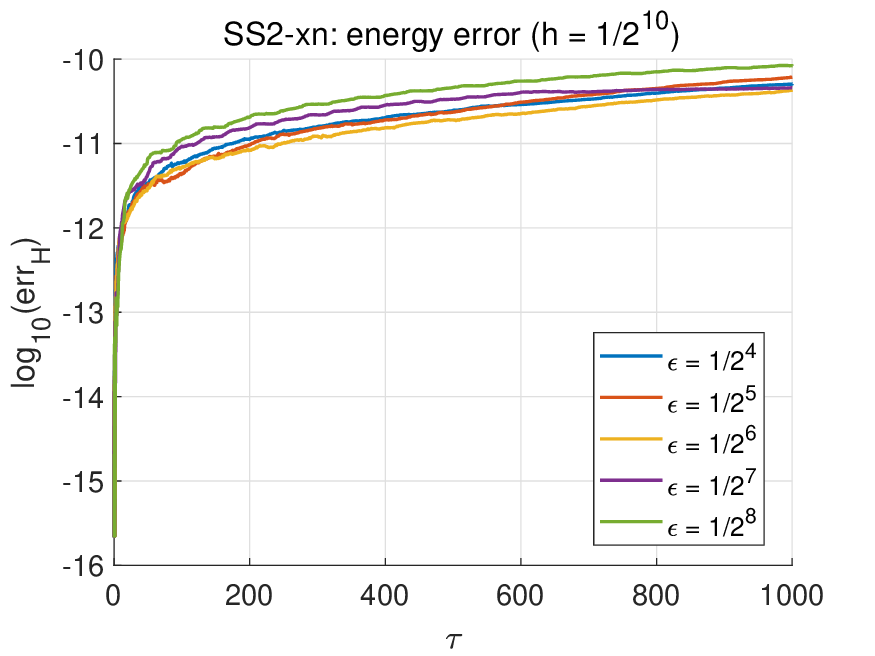}}
	\subfigure{\includegraphics[height=4cm]{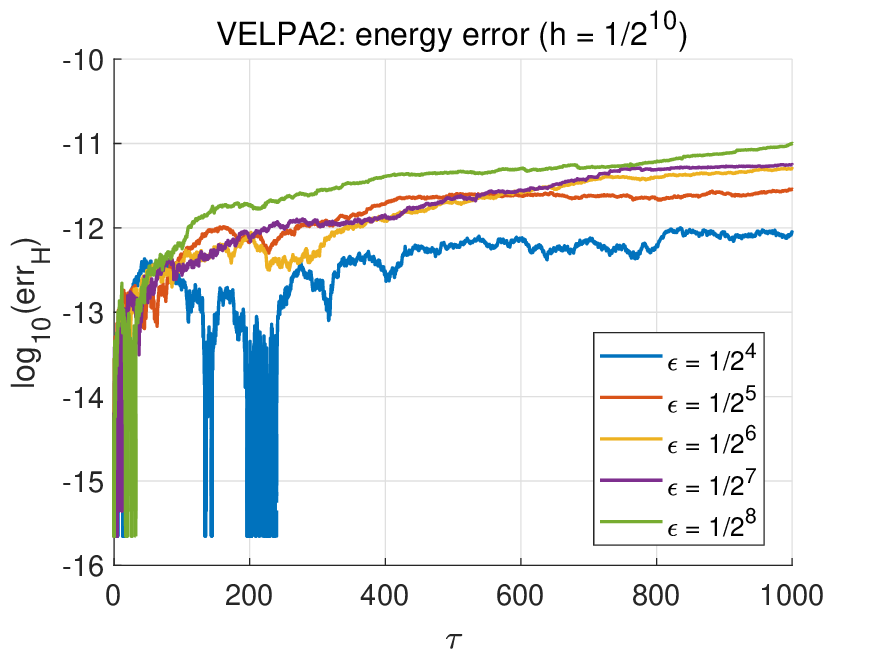}}
	\caption{Example \ref{p2}. The energy errors of SS2-xn (left panels) and VELPA2 (right panels).}
	\label{E_2}
\end{figure}

\begin{example}\label{p3}
	In the final test, the strong magnetic field under the MOS is taken as
	$ 
	\frac{1}{\varepsilon}\bB(\varepsilon \bx)=\frac{1}{\varepsilon}(1 - \cos(\varepsilon x_{1}),\sin(\varepsilon x_{3}) - \varepsilon x_{3},1 - (\cos(\varepsilon x_{2}))/2)^{\intercal}.
	$ 
	The potential is chosen as \(U(\bx) = x_{1}^{3} - x_{2}^{3} + \frac{1}{5}x_{1}^{4} + x_{2}^{4} + x_{3}^{4}\).
	The initial conditions are unchanged from the previous example. 
\end{example}

The particle evolution computed by the SS2-xn scheme and the error results of the two schemes for Example~\ref{p3} are presented in Figs.~\ref{spatial_3}-\ref{V_3}.
This numerical example employs a polynomial confining potential. The corresponding electric field generates a finite potential well, which causes the drift motion to reverse direction repeatedly. The guiding center can only oscillate back and forth inside the potential well; the particle is tightly confined within a bounded spatial region and cannot drift continuously in a single direction. As a result, compact quasi-periodic phase orbits are formed in the finite phase space. When $\varepsilon$ is sufficiently small, the strong magnetic field effectively suppresses transverse perturbations, and the trajectory evolves into the loop-shaped bundle structure shown in Fig.~\ref{spatial_3}. Fig.~\ref{velocity_3} further verifies the physical validity of the particle velocity. 
Fig.~\ref{S_3} shows that SS2-xn possesses a second-order uniform error bound, which is consistent with the theoretical results of Theorem \ref{th2.1}. 
As can be seen from the top panels of Fig.~\ref{V_3}, the error of VELPA2 in the $\by$ direction still depends on the small parameter $\varepsilon$. Since the difference in the dependence of the overall error bounds of the two schemes on $\varepsilon$ is not obvious, we additionally present a comparison of the errors of the two schemes in the $\by$ direction. It can be observed that the error of the VELPA2 scheme grows at the order of $\varepsilon^{-1}$ (see the bottom panels of Fig.~\ref{V_3}).
As shown in Fig.~\ref{E_3}, both schemes preserve energy conservation during long-time integration.

%%空间相图
\begin{figure}[t!] 
	\centering
	\subfigure{\includegraphics[height=4cm]{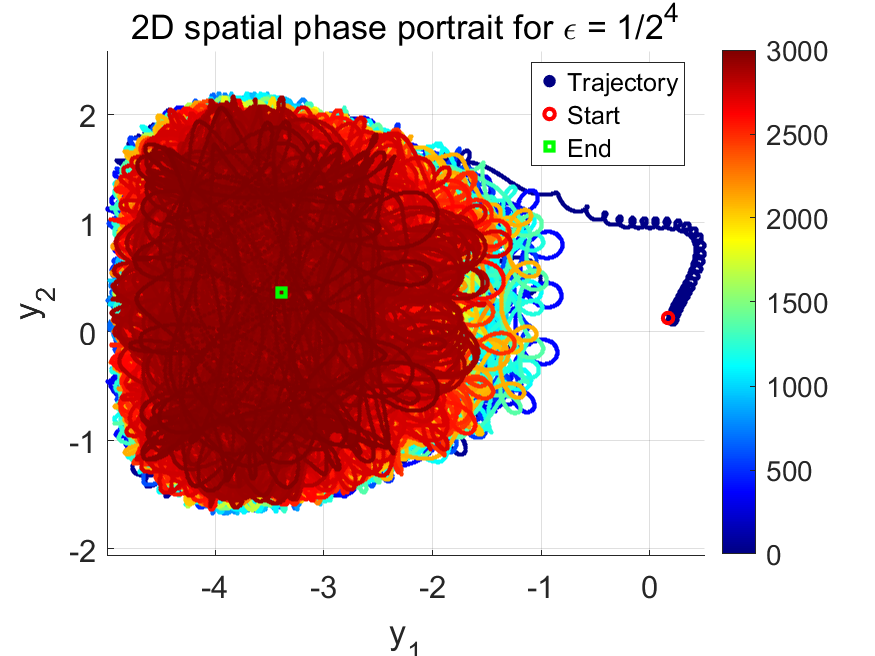}}
	\subfigure{\includegraphics[height=4cm]{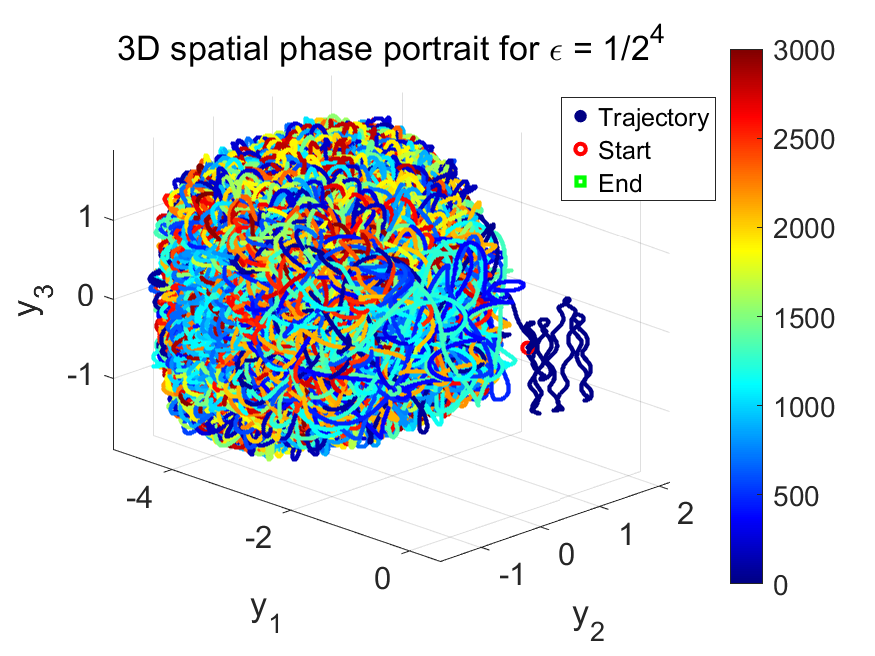}}\\
	\subfigure{\includegraphics[height=4cm]{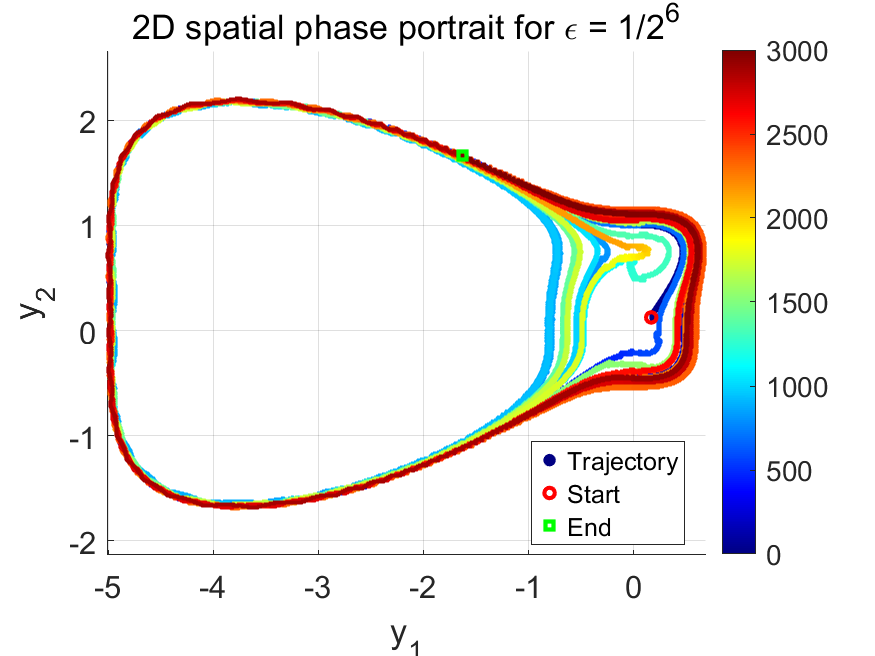}}
	\subfigure{\includegraphics[height=4cm]{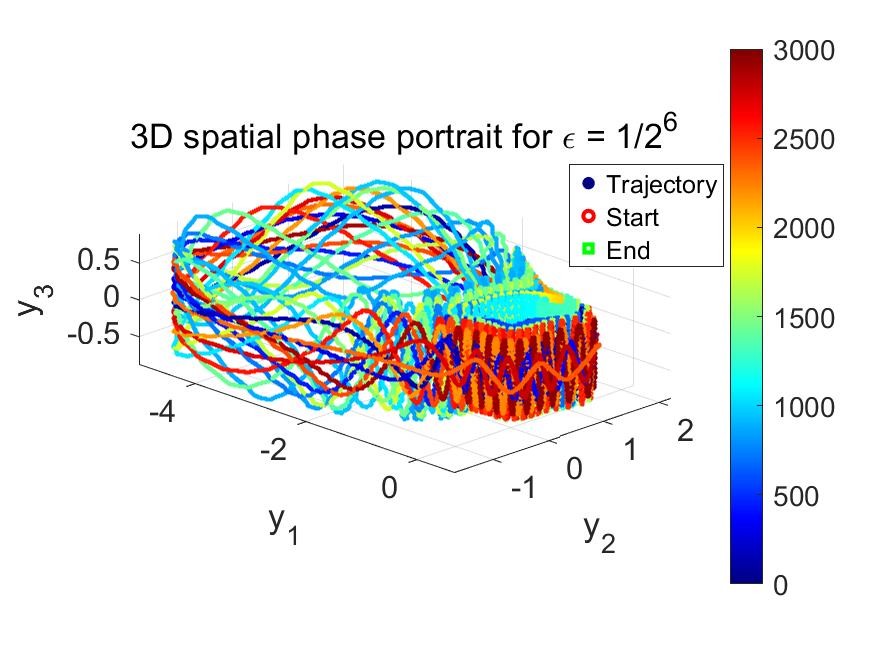}}
	\caption{Example \ref{p3}. 2D (left panels) and 3D (right panels) spatial evolution plots of SS2-xn with \(h = 1/2^8\), \(T = 3000\), and \(\varepsilon\) varying.}
	\label{spatial_3}
\end{figure}

%%速度相图
\begin{figure}[h!] 
	\centering
	\subfigure{\includegraphics[height=4cm]{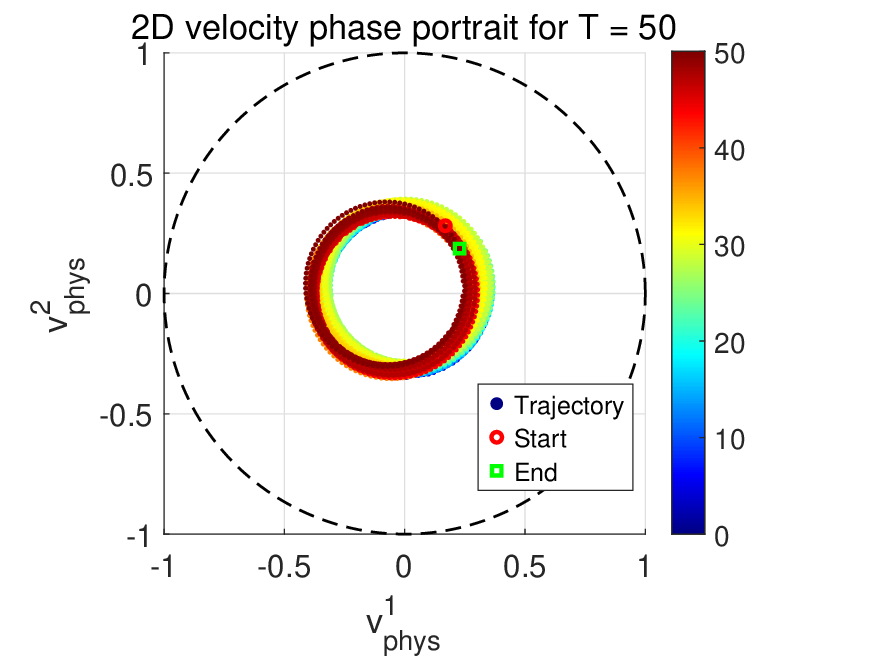}}
	\subfigure{\includegraphics[height=4cm]{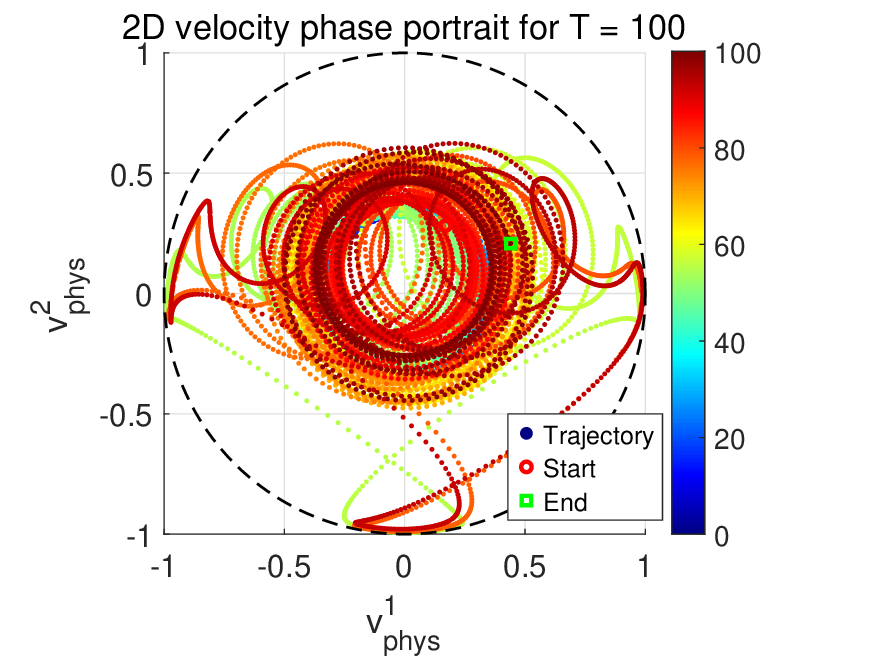}}
	\subfigure{\includegraphics[height=4cm]{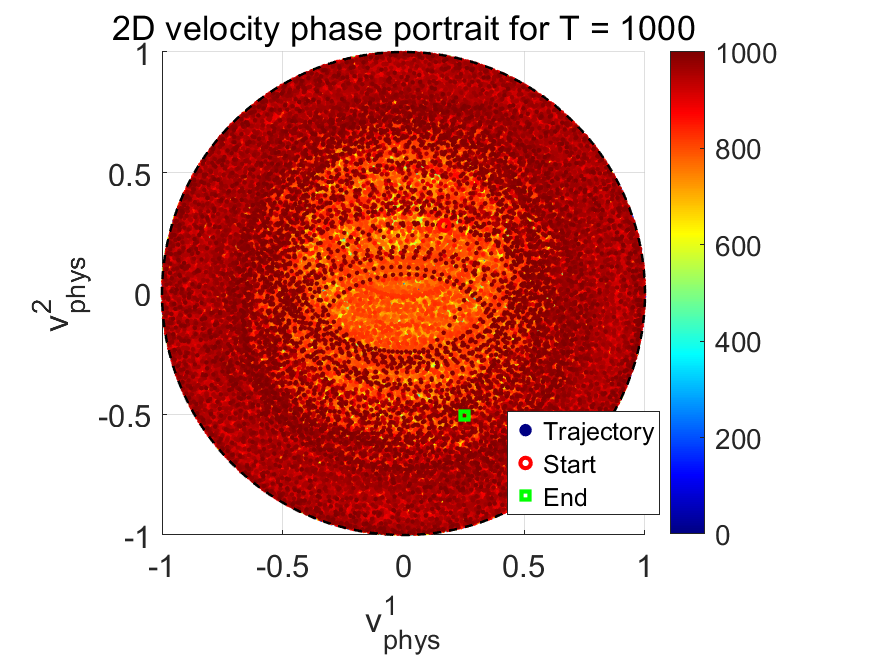}}
	\subfigure{\includegraphics[height=4cm]{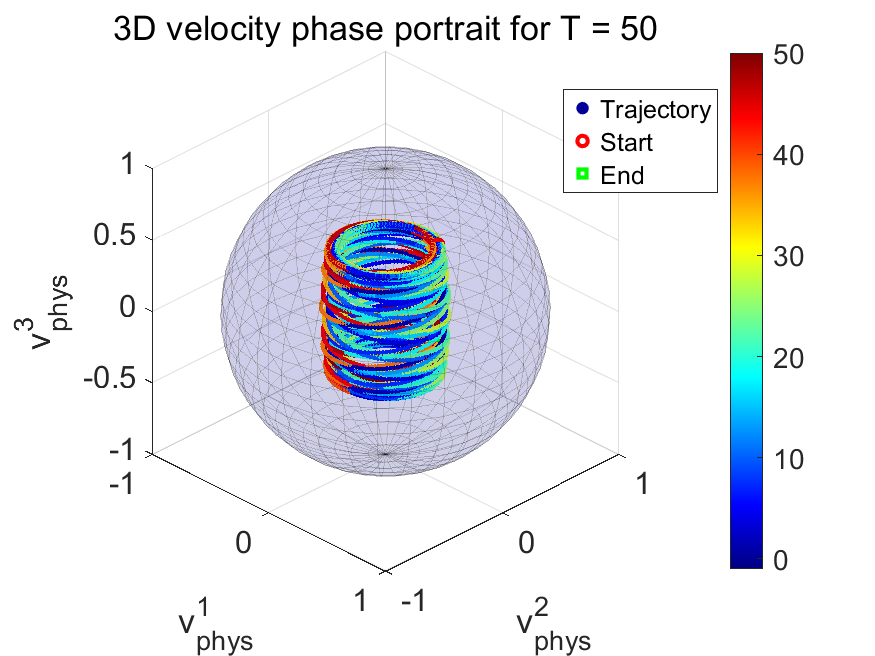}}
	\subfigure{\includegraphics[height=4cm]{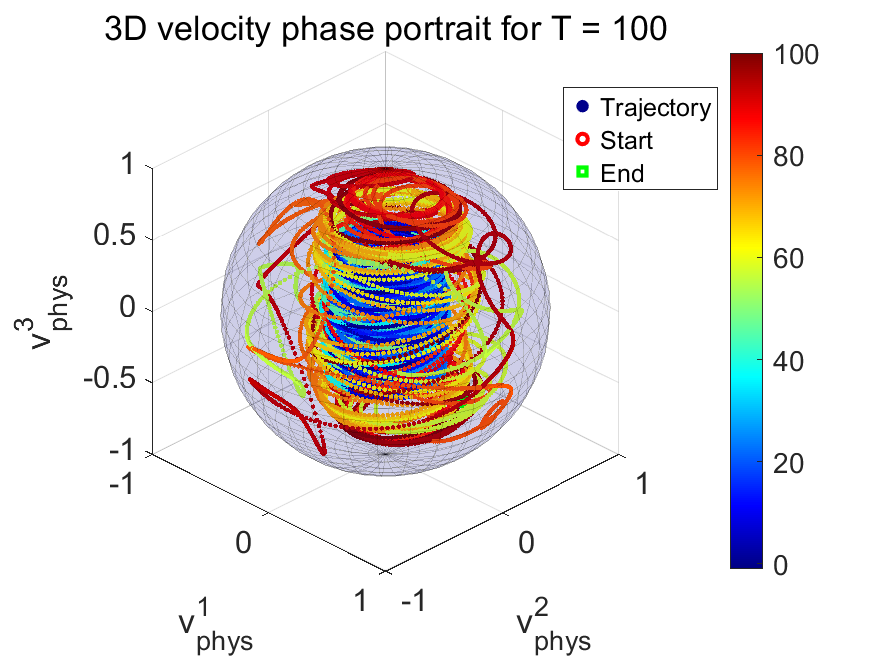}}
	\subfigure{\includegraphics[height=4cm]{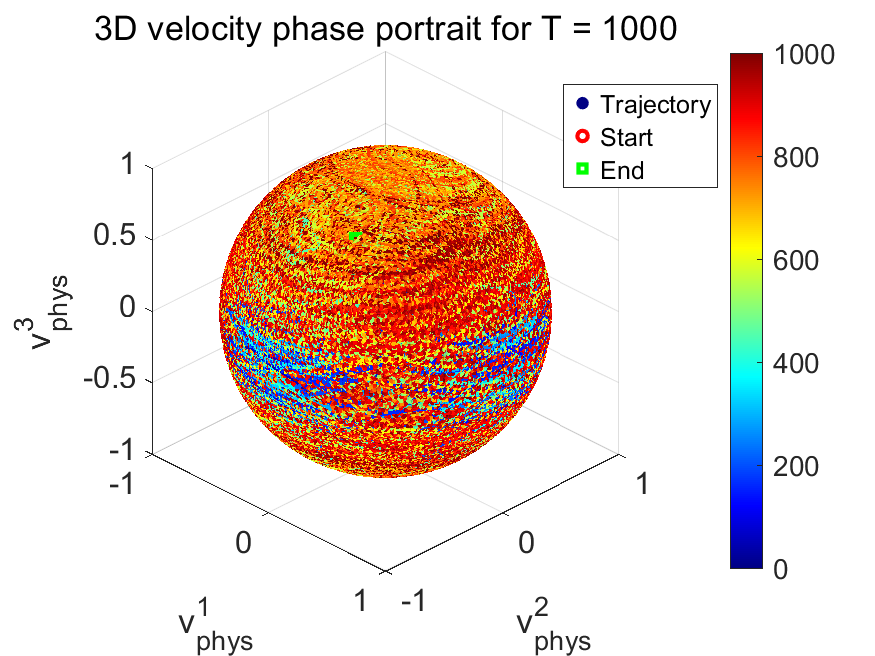}}
	\caption{Example \ref{p3}. Physical velocity evolution of SS2-xn in 2D (top panels) and 3D (bottom panels) for \(h = 1/2^8\), \(\varepsilon = 1/2^5\), and varying \(T\).}
	\label{velocity_3}
\end{figure}

%% SS2-xn 例3
\begin{figure}[t!] 
	\centering
	\subfigure{\includegraphics[height=4cm]{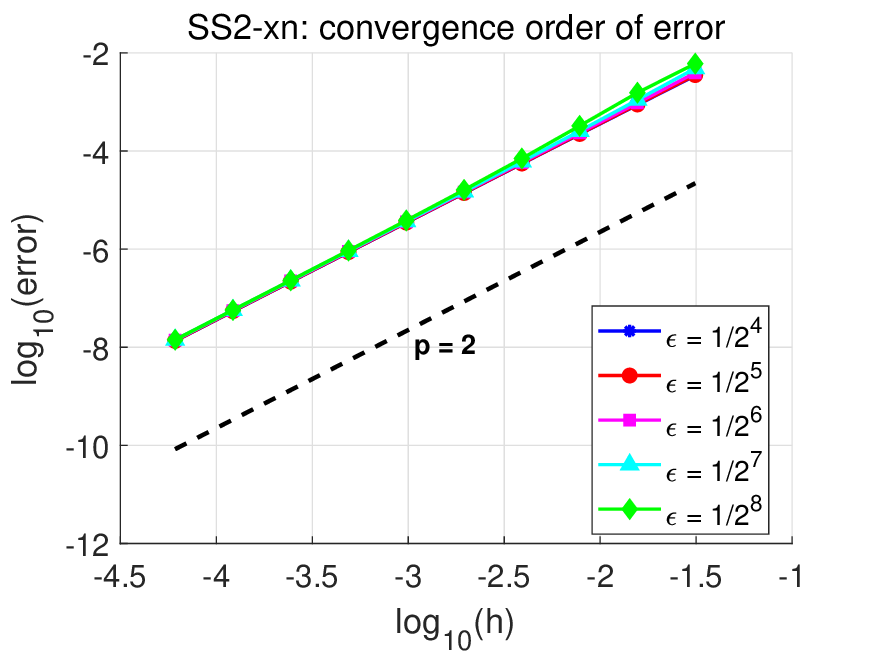}}
	\subfigure{\includegraphics[height=4cm]{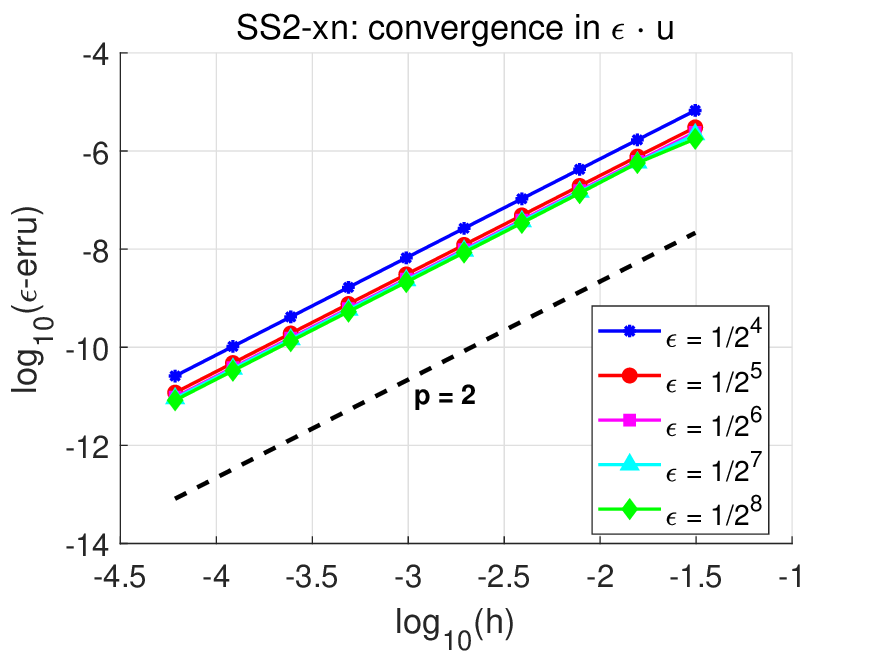}}\\
	\subfigure{\includegraphics[height=4cm]{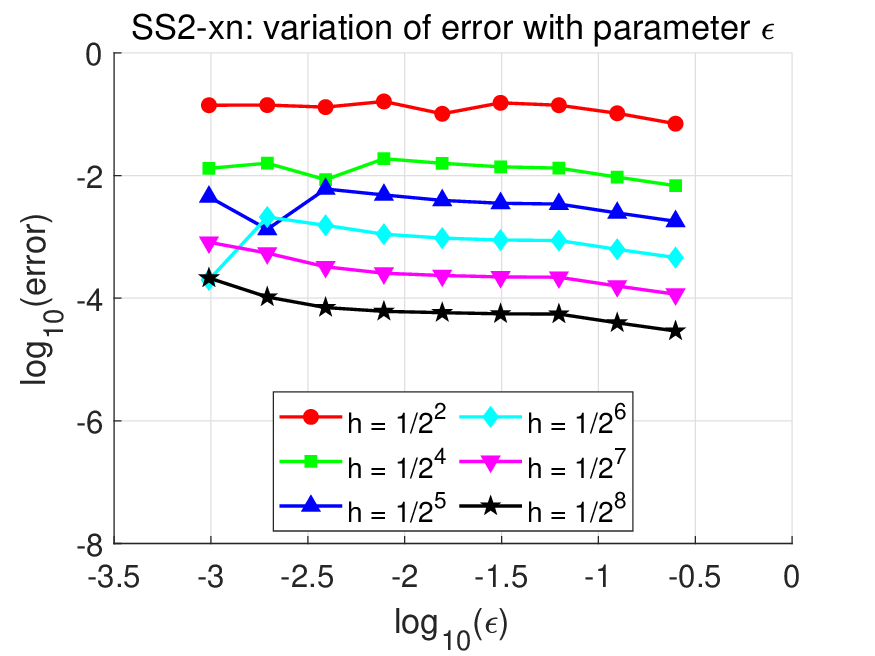}}
	\subfigure{\includegraphics[height=4cm]{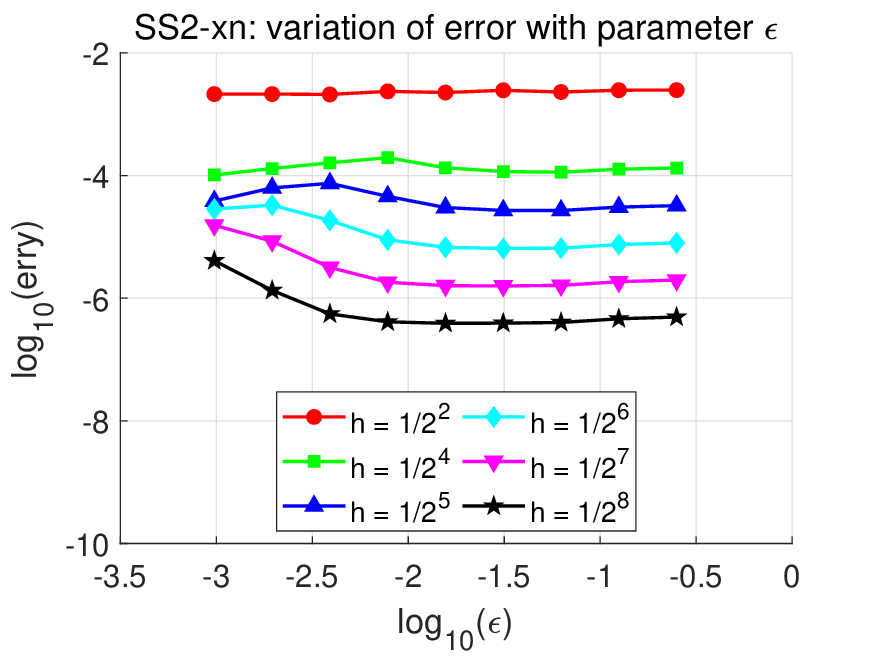}}
	\caption{Example \ref{p3}. SS2-xn errors: \eqref{error} and \eqref{erru} (top panels) with \( h = 1/2^{k} \) for \( k=5,\dots,14 \) and varying \( \varepsilon \); \eqref{error} and \(erry\) (bottom panels) with \( \varepsilon = 1/2^{k} \) for \( k=2,\dots,10 \) and varying \( h \).}
	\label{S_3}
\end{figure}

%% VELPA2 例3
\begin{figure}[t!] 
	\centering
	\subfigure{\includegraphics[height=4cm]{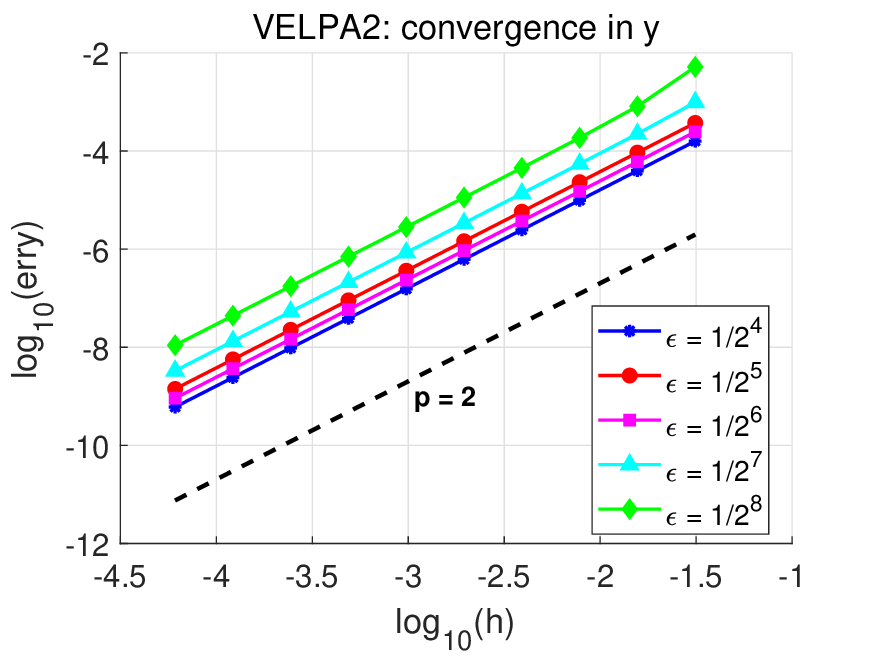}}
	\subfigure{\includegraphics[height=4cm]{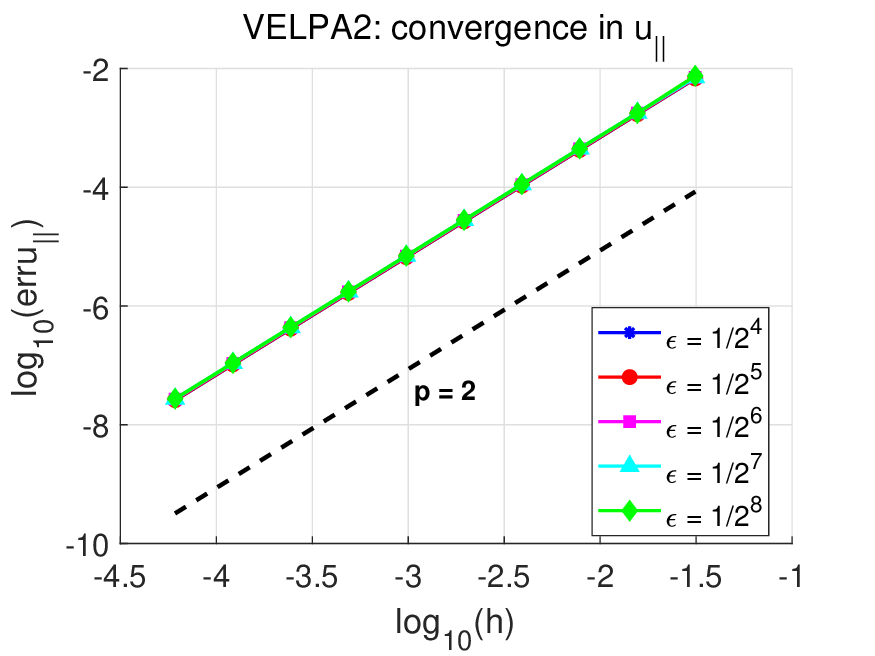}}\\
	\subfigure{\includegraphics[height=4cm]{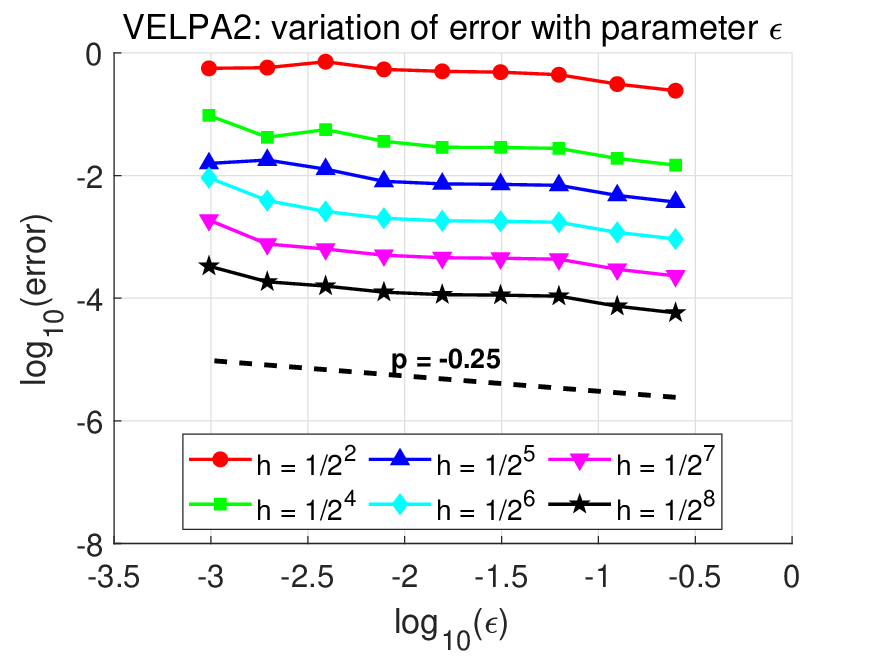}}
	\subfigure{\includegraphics[height=4cm]{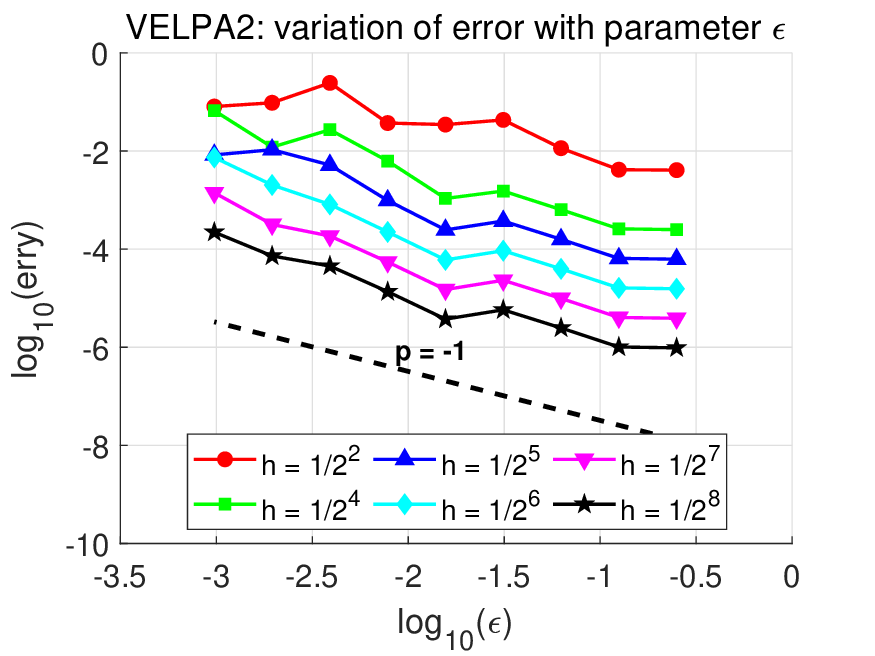}}
	\caption{Example \ref{p3}. VELPA2 errors: \(erry\) and \(erru_{\parallel}\) (top panels) with \( h = 1/2^{k} \) for \( k=5,\dots,14 \) and varying \( \varepsilon \); \eqref{error} and \(erry\) (bottom panels) with \( \varepsilon = 1/2^{k} \) for \( k=2,\dots,10 \) and varying \( h \).}
	\label{V_3}
\end{figure}

%%能量图
\begin{figure}[t!] 
	\centering
	\subfigure{\includegraphics[height=4cm]{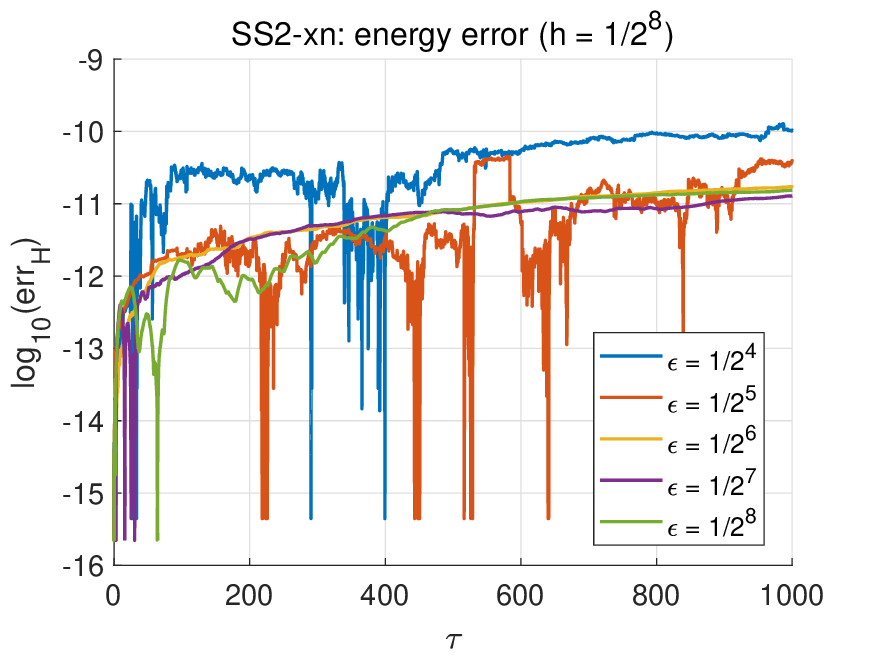}}
	\subfigure{\includegraphics[height=4cm]{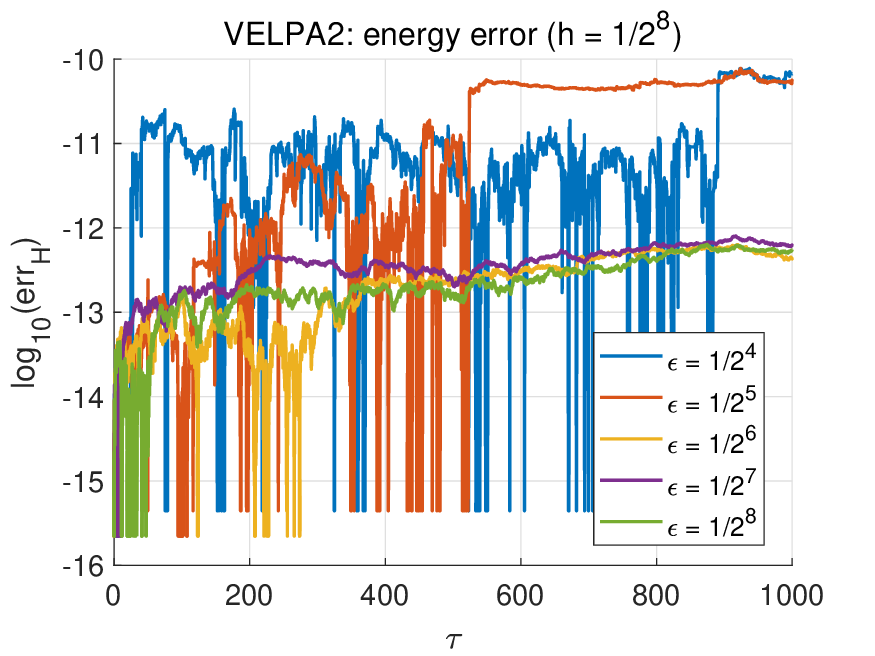}}\\
	\subfigure{\includegraphics[height=4cm]{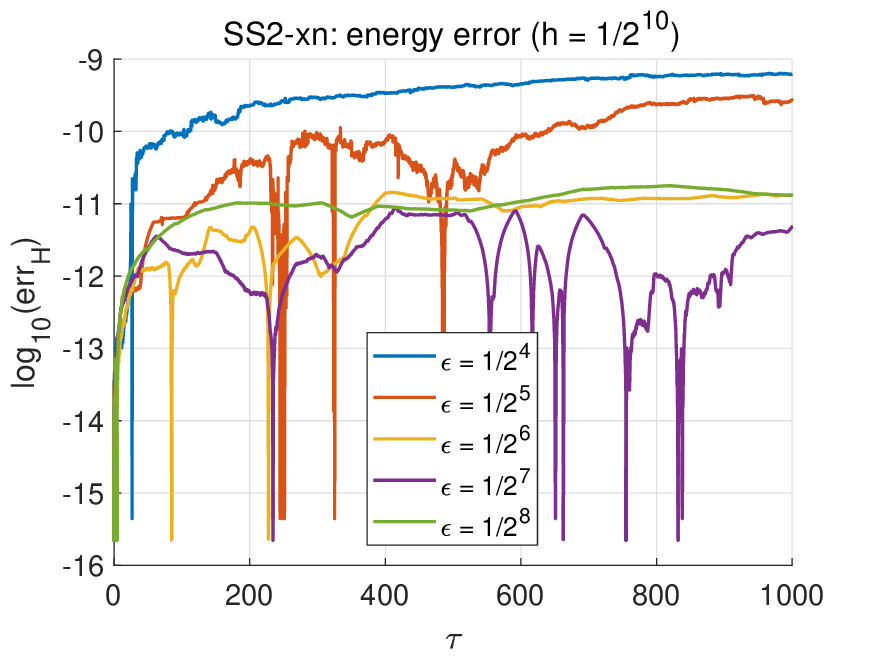}}
	\subfigure{\includegraphics[height=4cm]{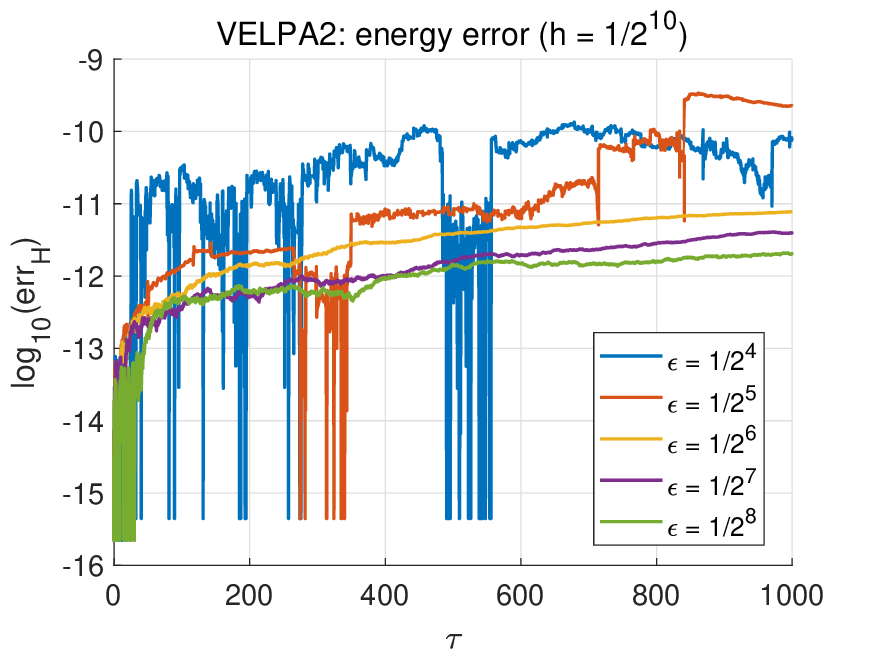}}
	\caption{Example \ref{p3}. The energy errors of SS2-xn (left panels) and VELPA2 (right panels).}
	\label{E_3}
\end{figure}

\section{Conclusion}\label{sec5}
For the four-dimensional relativistic dynamical system in the presence of strong magnetic fields, the accuracy of standard numerical methods degrades significantly as the magnetic field strength increases.
This paper develops a novel multi-physics structure-preserving algorithm with error bounds that are independent of strong magnetic fields. The proposed scheme is fully explicit and structure-preserving, achieving second-order uniform error bounds. Comparative numerical experiments demonstrate that it outperforms conventional second-order splitting schemes in terms of accuracy. Future work will focus on extending the theoretical analysis to relax the restrictions imposed by the magnetic field assumptions and step-size constraints, thereby establishing uniform accuracy under more general settings.

\appendix
\renewcommand{\thelemma}{A.\arabic{lemma}}
\section{} \label{appe}

\begin{proof}[\textbf{Proof of Lemma \ref{approximate}}]

Based on the definitions of $\zeta^n_{\by}(s)$ and $\zeta^n_{\bu}(s)$ (see \eqref{Z1}-\eqref{Z2}), we subtract the truncated system \eqref{ts} from the scaled long-time system \eqref{Rcpd2s} and derive the resulting error equations:
\begin{subequations}
	\begin{align}
		\dot{\zeta}^n_{\bx}(s) = & \,\, \varepsilon \zeta^n_{\bv}(s), \label{za} \\ \displaystyle
		\dot{\zeta}^n_{\bar{t}}(s) = & \,\, \varepsilon \zeta^n_{w}(s), \label{zb} \\ \displaystyle 
		\dot{\zeta}^n_{\bv}(s) = & \,\, \bv(\tau_{n}+s) \times \bB(\varepsilon \bx(\tau_{n}+s))
		- \widetilde{\bv}^{\, n}(s) \times \big[\bB_{mid} + \varepsilon \nabla \bB_{mid} \cdot \big( \widetilde{\bx}^{\, n}(s)-\bx_{mid} \big) \big]  \nonumber \\ 
		& + i\varepsilon \widetilde{w}^{\, n}(s) \big( \bE(\widetilde{\bx}^{\, n}(s)) -\bE(\bx(\tau_n + s)) \big)
		- i\eps \zeta^n_{w}(s) \bE(\bx(\tau_n + s)), \label{zc} \\
		\dot{\zeta}^n_{w}(s) = & \,\, i\eps \big( \bE(\bx(\tau_n + s))^{\intercal} - \bE(\widetilde{\bx}^{\, n}(s))^{\intercal} \big) \cdot \bv(\tau_n + s) + i\eps \bE(\widetilde{\bx}^{\, n}(s))^{\intercal} \cdot \zeta^n_{\bv}(s).  \label{zd}
	\end{align}
\end{subequations}
Since $\bB \in C^2(\mathbb{R}^3)$, we expand the magnetic field $\bB(\varepsilon \bx(\tau_{n}+s))$ in a Taylor series at $\bx_{mid}$ to obtain
\begin{equation}
	\bB(\varepsilon \bx(\tau_{n}+s)) = \bB_{mid} + \varepsilon \nabla \bB_{mid} \cdot \big( \bx(\tau_{n}+s)-\bx_{mid} \big) + \boldsymbol{R}_{2}(s). \label{bB}
\end{equation}
Combined with the condition $\left\| \bB(0) \right\| \lesssim \varepsilon$, we arrive at the estimates 
\begin{equation}
	\left\| \nabla \bB_{mid} \right\| = \left\| \nabla \bB(0) + \varepsilon \nabla^2 \bB(0) \cdot \bx_{mid} + \mathcal{O}(\varepsilon^2) \right\| \lesssim \varepsilon,
	\label{nablaB}
\end{equation}
and 
\begin{equation}
	\left\| \boldsymbol{R}_{2}(s) \right\| \lesssim \varepsilon^2 \left\|\bx(\tau_{n}+s)-\bx_{mid}\right\|^2 
	\lesssim \varepsilon^4 \left|s-\frac{\mathfrak{h}}{2}\right|^2. \label{R2}
\end{equation}
Furthermore, as $\bE \in C^2(\mathbb{R}^3)$, we apply the integral representation of the difference using the definition of $\zeta^n_{\bx}(s)$ and get 
\begin{equation}
	\bE(\bx(\tau_n + s)) - \bE(\widetilde{\bx}^{\, n}(s)) = 
	\int_{0}^{1} \nabla \bE\bigl(\bx(\tau_{n}+s) + (\rho-1)\zeta_{\bx}^{n}(s)\bigr) \, d\rho \cdot \zeta_{\bx}^{n}(s). \label{bE}
\end{equation}
Here $\nabla \bE$ stands for the gradient of the electric field $\bE$. Substituting \eqref{bB} and \eqref{bE} into \eqref{zc}-\eqref{zd}, we reformulate the error equations as follows:
\begin{equation}
	\begin{cases}
		\dot{\zeta}^n_{\bx}(s) = \varepsilon \zeta^n_{\bv}(s),  \displaystyle \quad 0 < s \leq \mathfrak{h}, \quad 0 \leq n < \frac{T}{\varepsilon \mathfrak{h}}, \\ \displaystyle
		\dot{\zeta}^n_{\bar{t}}(s) = \varepsilon \zeta^n_{w}(s), 
		\quad \zeta^n_{\bx}(0) = \zeta^n_{\bar{t}}(0) = 0, \\[2pt] 
		\displaystyle 
		\dot{\zeta}^n_{\bv}(s) = 
		\begin{aligned}[t]
			& \zeta^n_{\bv}(s) \times \bigl( \bB_{mid} + \Delta \bB(s) \bigr) + \widetilde{\bv}^{\, n}(s) \times \bigl( \varepsilon \nabla \bB_{mid} \cdot \zeta^n_{\bx}(s) \bigr) \displaystyle \\
			& + \bv(\tau_{n}+s) \times \boldsymbol{R}_{2}(s)
			- i\varepsilon \left[ \widetilde{w}^{\, n}(s) \big( \overline{\nabla \bE}(s) \cdot \zeta^n_{\bx}(s) \big) + \zeta^n_{w}(s) \bE(\bx(\tau_n + s)) \right], \\[2pt]
		\end{aligned} \\
		\dot{\zeta}^n_{w}(s) = i\eps \left[ \big( \overline{\nabla \bE}(s) \cdot \zeta^n_{\bx}(s) \big)^{\intercal} \bv(\tau_{n}+s) + \bE(\widetilde{\bx}^{\, n}(s))^{\intercal} \cdot \zeta^n_{\bv}(s) \right],
		\quad \zeta^n_{\bv}(0) = \zeta^n_{w}(0) = 0,\displaystyle
	\end{cases}
	\label{zeta}
\end{equation}
where $\Delta \bB(s) := \varepsilon \nabla \bB_{mid} \cdot 
\big( \bx(\tau_{n}+s)-\bx_{mid} \big) $ and 
\begin{equation*}
	\overline{\nabla \bE}(s) := \int_{0}^{1} \nabla \bE\bigl(\bx(\tau_{n}+s) + (\rho-1)\zeta_{\bx}^{n}(s)\bigr) \, d\rho.
\end{equation*}

Let $\Phi_{\bv}(\mh, s)$ denote the evolution operator associated with the homogeneous equation $\dot{\zeta}^n_{\bv}(s) = \boldsymbol{A}(s) \zeta^n_{\bv}(s)$, where $\boldsymbol{A}(s) := \widehat{\bB}_{mid} + \widehat{\Delta \bB(s)}$ is a skew-symmetric matrix. Applying Duhamel's principle to \eqref{zeta}, we arrive at
\begin{subequations}
	\begin{align}
		\zeta_{\bx}^{n}(\mh) = &\,\, \varepsilon \int_{0}^{\mh} \zeta_{\bv}^{n}(s) \, ds, \label{zea} \\
		\zeta_{\bar{t}}^{n}(\mh) = &\,\, \varepsilon \int_{0}^{\mh} \zeta_{w}^{n}(s) \, ds,  \label{zeb} \\
		\zeta_{\bv}^{n}(\mh) = & \int_{0}^{\mh} \Phi_{\bv}(\mh,s) f_{\bv}(s) \, ds, \label{zec} \\
		\zeta_{w}^{n}(\mh) = & \int_{0}^{\mh} i\eps \left[ \big( \overline{\nabla \bE}(s) \cdot \zeta^n_{\bx}(s) \big)^{\intercal} \bv(\tau_{n}+s) + \bE(\widetilde{\bx}^{\, n}(s))^{\intercal} \cdot \zeta^n_{\bv}(s) \right]
		\, ds. \label{zed}
	\end{align}
\end{subequations}
Here we take $0 < s \leq \mathfrak{h}$ and $0 \leq n < \dfrac{T}{\varepsilon \mathfrak{h}}$, and
\begin{equation*}
	f_{\bv}(s) = \widetilde{\bv}^{\, n}(s) \times \bigl( \varepsilon \nabla \bB_{mid} \cdot \zeta^n_{\bx}(s) \bigr) + \bv(\tau_{n}+s) \times \boldsymbol{R}_{2}(s) 
	- i\varepsilon \left[ \widetilde{w}^{\, n}(s) \big( \overline{\nabla \bE}(s) \cdot \zeta^n_{\bx}(s) \big) + \zeta^n_{w}(s) \bE(\bx(\tau_n + s)) \right].
\end{equation*}
Define the maximum norms $M_{\bx}=\sup_{0 \leq s \leq \mh}\left\|\zeta_{\bx}^{n}(s)\right\|$, $M_{\bar{t}}=\sup_{0 \leq s \leq \mh}\left\|\zeta_{\bar{t}}^{n}(s)\right\|$, $M_{\bv}=\sup_{0 \leq s \leq \mh}\left\|\zeta_{\bv}^{n}(s)\right\|$, and $M_{w}=\sup_{0 \leq s \leq \mh}\left\|\zeta_{w}^{n}(s)\right\|$. 
Taking norms on both sides of \eqref{zea} gives
\begin{equation*}
	\left\|\zeta_{\bx}^{n}(\mh)\right\| \lesssim \varepsilon \int_{0}^{\mh} \left\|\zeta_{\bv}^{n}(s)\right\| \, ds 
	\lesssim \eps \mh M_{\bv},
\end{equation*}
which implies
\begin{equation}
	M_{\bx} \lesssim \eps \mh M_{\bv}. \label{mx}
\end{equation}
By the boundedness of $\bE$ and $\bv$, we deduce from \eqref{zed} that
\begin{equation*}
	\left\|\zeta_{w}^{n}(\mh)\right\| \lesssim \varepsilon \int_{0}^{\mh} \big( \left\|\zeta_{\bx}^{n}(s)\right\| + \left\|\zeta_{\bv}^{n}(s)\right\| \big) \, ds 
	\lesssim \eps \mh \big(  M_{\bx} + M_{\bv} \big).
\end{equation*}
Substituting \eqref{mx} into the above inequality yields the estimate
\begin{equation}
	M_{w} \lesssim \eps \mh M_{\bv} + \eps^2 \mh^2 M_{\bv} \lesssim \eps \mh M_{\bv}. \label{mw}
\end{equation}
Similarly, we take the norm of \eqref{zeb} and combine it with \eqref{mw} to obtain
\begin{equation}
	M_{\bar{t}} \lesssim \eps \mh M_{w} \lesssim \eps^2 \mh^2 M_{\bv}. \label{mt}
\end{equation}
It remains to estimate $M_{\bv}$. We now take the norm of \eqref{zec} and apply the triangle inequality together with \eqref{R2} and \eqref{mx}-\eqref{mw}, leading to
\begin{align*}
	\left\|\zeta_{\bv}^{n}(\mh)\right\| \lesssim & \int_{0}^{\mh} \left\|f_{\bv}(s)\right\| \, ds 
	\lesssim \int_{0}^{\mh} \big( \eps^2 \left\|\zeta_{\bx}^{n}(s)\right\| 
	+ \eps \left\|\zeta_{\bx}^{n}(s)\right\| + \eps \left\|\zeta_{w}^{n}(s)\right\| + \left\| \boldsymbol{R}_{2}(s) \right\| \big) \\
	\lesssim &\,\, \eps \mh M_{\bx} + \eps \mh M_{w} + \eps^4 \int_{0}^{\mh} |s-\frac{\mathfrak{h}}{2}|^2 \, ds 
	\lesssim \eps^2 \mh^2 M_{\bv} + \eps^4 \mh^3.
\end{align*}
Consequently,
$
M_{\bv} \lesssim \varepsilon^2 \mh^2 M_{\bv} + \varepsilon^4 \mh^3.
$
For sufficiently small step size satisfying $\varepsilon^2 \mh^2 \leq 1/2$, we apply the absorption argument to derive
$
M_{\bv} \lesssim \varepsilon^4 \mh^3.
$
Substituting this bound back, we obtain the following error estimates: 
\begin{equation*}
	\left\|\zeta_{\bx}^{n}(\mh)\right\| \lesssim \eps^5 \mh^4, 
	\quad \left\|\zeta_{\bar{t}}^{n}(\mh)\right\| \lesssim \eps^6 \mh^5,
	\quad \left\|\zeta_{\bv}^{n}(\mh)\right\| \lesssim \eps^4 \mh^3,
	\quad \left\|\zeta_{w}^{n}(\mh)\right\| \lesssim \eps^5 \mh^4.
\end{equation*}
The proof of Lemma \ref{approximate} is complete.
\end{proof}

\begin{proof}[\textbf{Proof of Lemma \ref{lemmalocal}}]
	
According to the definition, we derive the local truncation error equations of the SS2-xn scheme \eqref{SS2s} for solving the truncated system as follows:

\begin{subequations}
	\begin{align}
		\widetilde{\by}^{\, n}(\mathfrak{h}) = & \,\, \by(\tau_{n})+\frac{\varepsilon \mh}{2}\varphi_{1}\biggl( \frac{\mh}{2} \bK_{\bx(\tau_{n})} \biggr)
		\Bigl(I_4 + \mathrm{e}^{\mh (\bK_{\bar{\bx}(\tau_{n})} - \bK_{\bx(\tau_{n})}) }
		\mathrm{e}^{\frac{\mh}{2}\bK_{\bx(\tau_{n})}}\Bigr) \bu(\tau_{n}) + \xi_{\by}^{n},  \label{bby} \\
		\widetilde{\bu}^{\, n}(\mathfrak{h}) = & \,\,  \mathrm{e}^{\frac{\mh}{2}\bK_{\bx(\tau_{n})}}
		\mathrm{e}^{\mh (\bK_{\bar{\bx}(\tau_{n})} - \bK_{\bx(\tau_{n})})}
		\mathrm{e}^{\frac{\mh}{2}\bK_{\bx(\tau_{n})}} \bu(\tau_{n}) + \xi_{\bu}^{n},
		\quad 0 \leq n < \frac{T}{\varepsilon \mh},  \label{bbu}
	\end{align}
\end{subequations}
where 
\begin{align*}
	\bar{\by}(\tau_{n})=\by(\tau_{n})+\frac{\varepsilon\mh}{2} \varphi_{1} \biggl( \frac{\mh}{2}\bK_{\bx(\tau_{n})} \biggr) \bu(\tau_{n}), \quad \bar{\bx}(\tau_{n}) = \bar{\by}(\tau_{n})(1:3), 
	\quad \bK_{\bx(\tau_{n})} = \bK(\bx(\tau_{n})),
	\quad \bK_{\bar{\bx}(\tau_{n})} = \bK(\bar{\bx}(\tau_{n})).
\end{align*}

Set $\delta \bB(\tau) = \varepsilon \nabla \bB_{mid} \cdot \big( \widetilde{\bx}^{\, n}(\tau)-\bx_{mid} \big)$. Then $\widetilde{\boldsymbol{A}}(\tau) = \widehat{\bB}_{mid} + \widehat{\delta \bB(\tau)}$ is a skew-symmetric matrix. Combining \eqref{nablaB} and the estimate 
\begin{align*}
	\left\| \widetilde{\bx}^{\, n}(\tau)-\bx_{mid} \right\| & =  
	\left\| \big( \widetilde{\bx}^{\, n}(\tau)-\bx(\tau_{n}+\tau) \big)
	+ \big( \bx(\tau_{n}+\tau)-\bx_{mid} \big) \right\|   
    = \left\| -\zeta_{\bx}^{n}(\tau) + \big( \bx(\tau_{n}+\tau)-\bx_{mid} \big) \right\|  \\
	& \lesssim \left\| \big( \bx_{mid} + \dot{\bx}_{mid}\cdot(\tau-\frac{\mathfrak{h}}{2})
	+ \mathcal{O}(\varepsilon (\tau-\frac{\mathfrak{h}}{2})^2 ) - \bx_{mid} \big) \right\| + \mathcal{O}(\varepsilon^5 \tau^4)   
    \lesssim \varepsilon \left|\tau-\frac{\mathfrak{h}}{2}\right| + \mathcal{O}(\varepsilon (\tau-\frac{\mathfrak{h}}{2})^2),
\end{align*}
we further deduce 
\begin{equation}
	\left\| \widehat{\delta \bB(\tau)} \right\| 
	\lesssim \varepsilon^3 \left|\tau-\frac{\mathfrak{h}}{2}\right| + \mathcal{O}(\varepsilon^3 (\tau-\frac{\mathfrak{h}}{2})^2).
	\label{deltaa}
\end{equation} 
Let $\widetilde{\Phi}_{\bv}(s,\sigma)$ denote the evolution operator of the homogeneous equation $\dot{\widetilde{\bv}} = \widetilde{\boldsymbol{A}}(\tau)\widetilde{\bv}$. By the variation of constants formula, we obtain
\begin{equation}
	\widetilde{\Phi}_{\bv}(s,\sigma) = \mathrm{e}^{(s-\sigma)\widehat{\bB}_{mid}} + \int_\sigma^s \mathrm{e}^{(s-\tau)\widehat{\bB}_{mid}} \widehat{\delta \bB(\tau)} \widetilde{\Phi}_{\bv}(\tau, \sigma) \, d\tau. 
	\label{Phi}
\end{equation}
Combining with \eqref{Phi}, applying Duhamel's principle to the truncated system \eqref{ts} yields the exact solution as
\begin{subequations}
	\begin{align}
		\widetilde{\bx}^{\, n}(\mh) = &\,\, \bx(\tau_{n}) + \eps \int_{0}^{\mh} \widetilde{\bv}^{\, n}(s) \, ds, \label{bxa} \\
		{\widetilde{\bar{t}}}^{\,\, n}(\mh)  = &\,\, \bar{t}(\tau_{n}) + \eps \int_{0}^{\mh} \widetilde{w}^{\, n}(s) \, ds, \label{btb} \\
		\widetilde{\bv}^{\, n}(\mh) = &\,\, \widetilde{\Phi}_{\bv}(\mh,0) \bv(\tau_{n}) - i\eps \int_{0}^{\mh} \widetilde{\Phi}_{\bv}(\mh,s) \widetilde{w}^{\, n}(s)\bE(\widetilde{\bx}^{\, n}(s)) \, ds  
		=  \mathrm{e}^{\mathfrak{h} \widehat{\bB}_{mid}} \bv(\tau_{n}) 
		- i\varepsilon \int_{0}^{\mathfrak{h}} \mathrm{e}^{(\mathfrak{h}-s) \widehat{\bB}_{mid}} \widetilde{w}^{\, n}(s)\bE(\widetilde{\bx}^{\, n}(s)) \, ds  \notag \\
		& + \int_{0}^{\mathfrak{h}} 
		\mathrm{e}^{(\mathfrak{h}-\tau)\widehat{\bB}_{mid}} 
		\widehat{\delta \bB(\tau)}
		\widetilde{\Phi}_{\bv}(\tau,0) \bv(\tau_{n}) \, d\tau 
		- i\varepsilon \int_{0}^{\mathfrak{h}} \int_{s}^{\mathfrak{h}}
		\mathrm{e}^{(\mathfrak{h}-\tau)\widehat{\bB}_{mid}} \widehat{\delta \bB(\tau)}
		\widetilde{\Phi}_{\bv}(\tau,s) \widetilde{w}^{\, n}(s)\bE(\widetilde{\bx}^{\, n}(s)) \, d\tau \, ds, \label{bvc} \\
		\widetilde{w}^{\, n}(\mh) = &\,\, w(\tau_{n}) + i\eps \int^{\mh}_{0}
		\bE(\widetilde{\bx}^{\, n}(s))^{\intercal} \cdot \widetilde{\bv}^{\, n}(s) \, ds, 
		\quad 0 \leq n < \frac{T}{\varepsilon \mathfrak{h}}. \label{bwd}
	\end{align}
\end{subequations}

$\bullet$   \textbf{The estimation of $\xi_{\bu}^{n}$.} 
From the boundedness of the electric field $\bE$, magnetic field $\bB$, evolution operator $\widetilde{\Phi}_{\bv}$, exact solutions $\bv$, $\widetilde{w}$, together with the bound \eqref{deltaa}, we deduce
\begin{align*}
	& \left\| \int_{0}^{\mathfrak{h}} 
	\mathrm{e}^{(\mathfrak{h}-\tau)\widehat{\bB}_{mid}} 
	\widehat{\delta \bB(\tau)}
	\widetilde{\Phi}_{\bv}(\tau,0) \bv(\tau_{n}) \, d\tau \right\|
	\lesssim  \int_{0}^{\mathfrak{h}} \left\| \widehat{\delta \bB(\tau)} \right\| \, d\tau   
	\lesssim \eps^3 \mh^3, \\
	& \left\|  - i\varepsilon \int_{0}^{\mathfrak{h}} \int_{s}^{\mathfrak{h}}
	\mathrm{e}^{(\mathfrak{h}-\tau)\widehat{\bB}_{mid}} \widehat{\delta \bB(\tau)}
	\widetilde{\Phi}_{\bv}(\tau,s) \widetilde{w}^{\, n}(s)\bE(\widetilde{\bx}^{\, n}(s)) \, d\tau \, ds \right\| 
	\lesssim \eps \int_{0}^{\mathfrak{h}} \int_{s}^{\mathfrak{h}} \left\| \widehat{\delta \bB(\tau)} \right\| \, d\tau \, ds
	\lesssim \eps^4 \mh^3.
\end{align*}
Substituting the expression of $\widetilde{w}^{\, n}(s)$ from \eqref{bwd} into \eqref{bvc} and inserting the preceding bounds yields
\begin{align*}
	\widetilde{\bv}^{\, n}(\mh) = &\,\, \mathrm{e}^{\mathfrak{h} \widehat{\bB}_{mid}} \bv(\tau_{n}) 
	- i\varepsilon \int_{0}^{\mathfrak{h}} \mathrm{e}^{(\mathfrak{h}-s) \widehat{\bB}_{mid}} w(\tau_{n}) \bE(\widetilde{\bx}^{\, n}(s)) \, ds 
	+ \eps^2 \int_{0}^{\mathfrak{h}} \mathrm{e}^{(\mathfrak{h}-s) \widehat{\bB}_{mid}} \int^{s}_{0}
	\bE(\widetilde{\bx}^{\, n}(\sigma))^{\intercal} \cdot \widetilde{\bv}^{\, n}(\sigma) \, d\sigma \, \bE(\widetilde{\bx}^{\, n}(s)) \, ds
	+ \mathcal{O}(\eps^3 \mh^3) \\
	= &\,\, \mathrm{e}^{\mathfrak{h} \widehat{\bB}_{mid}} \bv(\tau_{n}) 
	- i\varepsilon \int_{0}^{\mathfrak{h}} \mathrm{e}^{(\mathfrak{h}-s) \widehat{\bB}_{mid}} w(\tau_{n}) \bE(\widetilde{\bx}^{\, n}(s)) \, ds \\
	& + \eps^2 \int_{0}^{\mathfrak{h}} \mathrm{e}^{(\mathfrak{h}-s) \widehat{\bB}_{mid}} \int^{s}_{0}
	\bE(\widetilde{\bx}^{\, n}(\sigma))^{\intercal} \cdot \big( \mathrm{e}^{\sigma \widehat{\bB}_{mid}} \bv(\tau_{n}) \big) \, d\sigma \, \bE(\widetilde{\bx}^{\, n}(s)) \, ds
	+ \mathcal{O}(\eps^3 \mh^3). 
\end{align*}
To improve the order of accuracy for the truncation error, we reformulate the first integral appearing on the right-hand side of the above identity. Setting $s = (1-\rho)\mathfrak{h}$ and performing a change of variables in this definite integral leads to
\begin{equation*}
	- i\varepsilon \int_{0}^{\mathfrak{h}} \mathrm{e}^{(\mathfrak{h}-s) \widehat{\bB}_{mid}} w(\tau_{n}) \bE(\widetilde{\bx}^{\, n}(s)) \, ds
	= - i\varepsilon \mathfrak{h} \int_{0}^{1} \mathrm{e}^{\rho \mathfrak{h} \widehat{\bB}_{mid}} w(\tau_{n}) \bE(\widetilde{\bx}^{\, n}((1-\rho)\mathfrak{h})) \, d\rho.
\end{equation*}
Making use of the definition of the error $\zeta_{\bx}^{n}$ together with Taylor series expansion, we expand and reconstruct the electric field as
\begin{align}
	& \bE\big( \widetilde{\bx}^{\, n}((1-\rho)\mathfrak{h}) \big) = \bE\big( \bx(\tau_{n} + (1-\rho)\mathfrak{h}) - \zeta_{\bx}^{n}((1-\rho)\mathfrak{h}) \big) 
	= \bE \Big( \bx(\tau_{n} + \frac{\mathfrak{h}}{2}) + (\frac{1}{2} - \rho)\mathfrak{h} \dot{\bx}(\tau_{\rho}^{n}) - \zeta_{\bx}^{n}((1-\rho)\mathfrak{h}) \Big) \notag \\
	&\quad = \bE\Big(\bx(\tau_{n} + \frac{\mathfrak{h}}{2}) \Big) + \int_{0}^{1}\nabla \bE(s_{\sigma}) \, d\sigma \Big[ (\frac{1}{2} - \rho)\mathfrak{h} \varepsilon \bv(\tau_{\rho}^{n}) - \zeta_{\bx}^{n}((1-\rho) \mathfrak{h}) \Big],  \label{Ewx}
\end{align}
where $\tau_{\rho}^{n} \in \bigl( \min(\tau_{n}+\tfrac{\mathfrak{h}}{2},\tau_{n}+(1-\rho)\mathfrak{h}),\; \max(\tau_{n}+\tfrac{\mathfrak{h}}{2},\tau_{n}+(1-\rho)\mathfrak{h}) \bigr)$, and the intermediate argument $s_\sigma$ is defined by
\begin{equation*}
	s_{\sigma} = \bx(\tau_{n} + \frac{\mathfrak{h}}{2}) + \sigma \Big[ (\frac{1}{2} - \rho)\mathfrak{h} \varepsilon \bv(\tau_{\rho}^{n}) - \zeta_{\bx}^{n}((1-\rho) \mathfrak{h}) \Big].
\end{equation*}
Substituting the reconstructed electric-field expression back into the original integral yields
\begin{align*}
	& - i\varepsilon \int_{0}^{\mathfrak{h}} \mathrm{e}^{(\mathfrak{h}-s) \widehat{\bB}_{mid}} w(\tau_{n}) \bE(\widetilde{\bx}^{\, n}(s)) \, ds 
	= - i\varepsilon \mathfrak{h} \int_{0}^{1} \mathrm{e}^{\rho \mathfrak{h} \widehat{\bB}_{mid}} w(\tau_{n}) \bE\Big(\bx(\tau_{n} + \frac{\mathfrak{h}}{2}) \Big) \, d\rho \\
	&\quad\hspace{4mm} - i\varepsilon \mathfrak{h} \int_{0}^{1} \mathrm{e}^{\rho \mathfrak{h} \widehat{\bB}_{mid}} w(\tau_{n}) \int_{0}^{1}\nabla \bE(s_{\sigma}) \, d\sigma \Big[ (\frac{1}{2} - \rho)\mathfrak{h} \varepsilon \bv(\tau_{\rho}^{n}) - \zeta_{\bx}^{n}((1-\rho) \mathfrak{h}) \Big] \, d\rho \\
	&\quad = - i\varepsilon \mathfrak{h} \int_{0}^{1} \mathrm{e}^{\rho \mathfrak{h} \widehat{\bB}_{mid}} w(\tau_{n}) \bE\Big(\bx(\tau_{n} + \frac{\mathfrak{h}}{2}) \Big) \, d\rho + \mathcal{O}(\eps^2 \mh^3).
\end{align*}
Accordingly, we arrive at the finalized expression 
\begin{align}
	\widetilde{\bv}^{\, n}(\mh) = &\,\, \mathrm{e}^{\mathfrak{h} \widehat{\bB}_{mid}} \bv(\tau_{n}) 
	- i\varepsilon \mathfrak{h} \int_{0}^{1} \mathrm{e}^{\rho \mathfrak{h} \widehat{\bB}_{mid}} w(\tau_{n}) \bE\Big(\bx(\tau_{n} + \frac{\mathfrak{h}}{2}) \Big) \, d\rho \notag\\
	& + \eps^2 \int_{0}^{\mathfrak{h}} \mathrm{e}^{(\mathfrak{h}-s) \widehat{\bB}_{mid}} \int^{s}_{0}
	\bE(\widetilde{\bx}^{\, n}(\sigma))^{\intercal} \cdot \big( \mathrm{e}^{\sigma \widehat{\bB}_{mid}} \bv(\tau_{n}) \big) \, d\sigma \, \bE(\widetilde{\bx}^{\, n}(s)) \, ds
	+ \mathcal{O}(\eps^2 \mh^3). \label{bv}
\end{align}
Inserting \eqref{bv} into \eqref{bwd} gives
\begin{align*}
	\widetilde{w}^{\, n}(\mh) = &\,\, w(\tau_{n}) + i\eps \int^{\mh}_{0}
	\bE(\widetilde{\bx}^{\, n}(s))^{\intercal} \cdot \big( \mathrm{e}^{s \widehat{\bB}_{mid}} \bv(\tau_{n}) \big) \, ds 
	+ \eps^2 \int^{\mh}_{0}
	\bE(\widetilde{\bx}^{\, n}(s))^{\intercal} \cdot s \int_{0}^{1} \mathrm{e}^{\rho \mathfrak{h} \widehat{\bB}_{mid}} w(\tau_{n}) \bE\Big(\bx(\tau_{n} + \frac{\mathfrak{h}}{2}) \Big) \, d\rho \, ds
	+ \mathcal{O}(\eps^3 \mh^3).
\end{align*}
Likewise, we perform the change of variables $s = (1-\rho)\mathfrak{h}$ for the first integral on the right-hand side and apply identity \eqref{Ewx} to obtain
\begin{align}
	\widetilde{w}^{\, n}(\mh) = &\,\, w(\tau_{n}) + i\eps\mh \int^{1}_{0}
	\bE\Big(\bx(\tau_{n} + \frac{\mathfrak{h}}{2}) \Big)^{\intercal} \cdot \big( \mathrm{e}^{(1-\rho)\mh \widehat{\bB}_{mid}} \bv(\tau_{n}) \big) \, d\rho \notag \\
	& + \eps^2 \int^{\mh}_{0}
	\bE(\widetilde{\bx}^{\, n}(s))^{\intercal} \cdot s \int_{0}^{1} \mathrm{e}^{\rho \mathfrak{h} \widehat{\bB}_{mid}} w(\tau_{n}) \bE\Big(\bx(\tau_{n} + \frac{\mathfrak{h}}{2}) \Big) \, d\rho \, ds
	+ \mathcal{O}(\eps^2 \mh^3).  \label{bw}
\end{align}
Subtracting \eqref{bbu} from the combination of \eqref{bv} and \eqref{bw}, we derive the following representation for the local truncation error $\xi_{\bu}^{n}$:
\begin{align}
	\xi_{\bu}^{n} = & 
	\begin{pmatrix}
		\begin{aligned}
			\displaystyle 
			&\hspace{8mm} \mathrm{e}^{\mathfrak{h} \widehat{\bB}_{mid}} \bv(\tau_{n}) 
			- i\varepsilon \mathfrak{h} \int_{0}^{1} \mathrm{e}^{\rho \mathfrak{h} \widehat{\bB}_{mid}} w(\tau_{n}) \bE\Big(\bx(\tau_{n} + \frac{\mathfrak{h}}{2}) \Big) \, d\rho \\
			\displaystyle
			& + \eps^2 \int_{0}^{\mathfrak{h}} \mathrm{e}^{(\mathfrak{h}-s) \widehat{\bB}_{mid}} \int^{s}_{0}
			\bE(\widetilde{\bx}^{\, n}(\sigma))^{\intercal} \cdot \big( \mathrm{e}^{\sigma \widehat{\bB}_{mid}} \bv(\tau_{n}) \big) \, d\sigma \, \bE(\widetilde{\bx}^{\, n}(s)) \, ds  
		\end{aligned} \\[27pt]
		\begin{aligned}
			\displaystyle
			&\hspace{3mm} w(\tau_{n}) + i\eps\mh \int^{1}_{0}
			\bE\Big(\bx(\tau_{n} + \frac{\mathfrak{h}}{2}) \Big)^{\intercal} \cdot \big( \mathrm{e}^{(1-\rho)\mh \widehat{\bB}_{mid}} \bv(\tau_{n}) \big) \, d\rho \\
			\displaystyle
			& + \eps^2 \int^{\mh}_{0}
			\bE(\widetilde{\bx}^{\, n}(s))^{\intercal} \cdot s \int_{0}^{1} \mathrm{e}^{\rho \mathfrak{h} \widehat{\bB}_{mid}} w(\tau_{n}) \bE\Big(\bx(\tau_{n} + \frac{\mathfrak{h}}{2}) \Big) \, d\rho \, ds
		\end{aligned}
	\end{pmatrix}  
	\displaystyle  - \mathrm{e}^{\frac{\mh}{2}\bK_{\bx(\tau_{n})}}
	\mathrm{e}^{\mh (\bK_{\bar{\bx}(\tau_{n})} - \bK_{\bx(\tau_{n})})}
	\mathrm{e}^{\frac{\mh}{2}\bK_{\bx(\tau_{n})}} \bu(\tau_{n})
	+ \mathcal{O}(\eps^2 \mh^3). \label{xiu}
\end{align}
From its definition, the matrix 
$
\bK = \widehat{\widehat{\bB}} + \eps \widehat{\bE} 
$
is skew-symmetric. Taking the norm of both sides of \eqref{xiu} and truncating the Taylor expansion up to the \(\mh^2\) term yields 
\begin{align}
	\left\| \xi_{\bu}^{n} \right\| \lesssim &
	\left\|
	\begin{pmatrix}
		\begin{aligned}
			\displaystyle
			&\hspace{10mm} \Big[ \mh \big( \widehat{\bB}_{mid} - \widehat{\bB}_{\bar{\bx}(\tau_{n})} \big) 
			+ \mh^2 \big( \widehat{\bB}^{2}_{mid} - \widehat{\bB}^{2}_{\bar{\bx}(\tau_{n})} \big) \\
			\displaystyle
			& + \eps^2\mh^2 \big( \bE(\widetilde{\bx}^{\, n} (\sigma))^{\intercal} \bE(\widetilde{\bx}^{\, n}(s)) - \bE(\bar{\bx}(\tau_{n}))^{\intercal} \bE(\bar{\bx}(\tau_{n})) \big)  \Big] \bv(\tau_{n}) \\
			\displaystyle
			&\hspace{12mm} + \Big[ i\eps\mh \big( \bE(\bar{\bx}(\tau_{n})) - \bE\big(\bx(\tau_{n} + \frac{\mathfrak{h}}{2})\big) \big) \\
			\displaystyle
			&\hspace{3mm} + i\eps\mh^2 \big( \widehat{\bB}_{\bar{\bx}(\tau_{n})} \bE(\bar{\bx}(\tau_{n})) - \widehat{\bB}_{mid} \bE\big(\bx(\tau_{n} + \frac{\mathfrak{h}}{2})\big) \big) \Big] w(\tau_{n}) 
		\end{aligned}\\[39pt]
		\begin{aligned}
			\displaystyle
			&\hspace{16mm} \Big[ i\eps\mh \big( \bE\big(\bx(\tau_{n} + \frac{\mathfrak{h}}{2}) \big)^{\intercal} - \bE(\bar{\bx}(\tau_{n}))^{\intercal} \big) \\
			\displaystyle
			&\hspace{2mm} + i\eps\mh^2 \big( \bE\big(\bx(\tau_{n} + \frac{\mathfrak{h}}{2}) \big)^{\intercal} \widehat{\bB}_{mid} - \bE(\bar{\bx}(\tau_{n}))^{\intercal} \widehat{\bB}_{\bar{\bx}(\tau_{n})} \big)
			\Big] \bv(\tau_{n}) \\
			\displaystyle
			& + \eps^2\mh^2 \Big[   \bE(\widetilde{\bx}^{\, n}(s))^{\intercal}
			\bE\big(\bx(\tau_{n} + \frac{\mathfrak{h}}{2})\big) - \bE(\bar{\bx}(\tau_{n}))^{\intercal} \bE(\bar{\bx}(\tau_{n}))  \Big] w(\tau_{n})
		\end{aligned}
	\end{pmatrix}
	\right\| + \mathcal{O}(\eps^2 \mh^3) \notag \\
	\lesssim & 
	\left\|
	\begin{pmatrix}
		\begin{aligned}
			\displaystyle
			&\hspace{22mm} \mh \left\| \widehat{\bB}_{mid} - \widehat{\bB}_{\bar{\bx}(\tau_{n})} \right\| + \mh^2 \left\| \big( \widehat{\bB}_{mid} - \widehat{\bB}_{\bar{\bx}(\tau_{n})} \big) \widehat{\bB}_{mid} \right\| \\
			\displaystyle
			&\hspace{4mm} + \mh^2 \left\| \widehat{\bB}_{\bar{\bx}(\tau_{n})} \big( \widehat{\bB}_{mid} - \widehat{\bB}_{\bar{\bx}(\tau_{n})} \big) \right\|
			+ \eps^2\mh^2 \left\| \bE(\widetilde{\bx}^{\, n} (\sigma))^{\intercal} \big( \bE(\widetilde{\bx}^{\, n}(s)) -  \bE(\bar{\bx}(\tau_{n})) \big) \right\| \\
			\displaystyle
			& + \eps^2\mh^2 \left\| \big( \bE(\widetilde{\bx}^{\, n} (\sigma))^{\intercal} - \bE(\bar{\bx}(\tau_{n}))^{\intercal} \big)  \bE(\bar{\bx}(\tau_{n}))  \right\|
			+ \eps\mh \left\| \bE(\bar{\bx}(\tau_{n})) - \bE\big(\bx(\tau_{n} + \frac{\mathfrak{h}}{2})\big) \right\| \\
			\displaystyle
			& + \eps\mh^2 \left\| \widehat{\bB}_{\bar{\bx}(\tau_{n})} \big(
			\bE(\bar{\bx}(\tau_{n})) - \bE\big(\bx(\tau_{n} + \frac{\mathfrak{h}}{2})\big) \big) \right\| 
			+ \eps\mh^2 \left\| \big( \widehat{\bB}_{\bar{\bx}(\tau_{n})} 
			- \widehat{\bB}_{mid} \big) \bE\big(\bx(\tau_{n} + \frac{\mathfrak{h}}{2})\big) \right\| 
		\end{aligned}\\[42pt]
		\begin{aligned}
			\displaystyle
			&\hspace{10mm} \eps\mh \left\| \bE\big(\bx(\tau_{n} + \frac{\mathfrak{h}}{2}) \big)^{\intercal} - \bE(\bar{\bx}(\tau_{n}))^{\intercal} \right\|
			+ \eps\mh^2 \left\| \bE\big(\bx(\tau_{n} + \frac{\mathfrak{h}}{2}) \big)^{\intercal} \big( \widehat{\bB}_{mid} -  \widehat{\bB}_{\bar{\bx}(\tau_{n})} \big) \right\| \\
			\displaystyle
			& + \eps\mh^2 \left\| \big( \bE\big(\bx(\tau_{n} + \frac{\mathfrak{h}}{2}) \big)^{\intercal}  - \bE(\bar{\bx}(\tau_{n}))^{\intercal} \big) \widehat{\bB}_{\bar{\bx}(\tau_{n})} \right\| 
			+ \eps^2\mh^2 \left\| \big(  \bE(\widetilde{\bx}^{\, n}(s))^{\intercal}
			- \bE(\bar{\bx}(\tau_{n}))^{\intercal} \big) \bE(\bar{\bx}(\tau_{n}))  \right\| \\
			\displaystyle
			&\hspace{20mm} + \eps^2\mh^2 \left\|   \bE(\widetilde{\bx}^{\, n} (s))^{\intercal}
			\big( \bE\big(\bx(\tau_{n} + \frac{\mathfrak{h}}{2})\big) - \bE(\bar{\bx}(\tau_{n})) \big) \right\|
		\end{aligned}
	\end{pmatrix}  
	\right\|  
	+ \mathcal{O}(\eps^2 \mh^3). \label{xiu2}
\end{align}
Recalling the definitions of relevant quantities and applying Taylor expansion together with bound \eqref{nablaB}, we derive the preliminary estimates
\begin{align}
	& \left\| \widehat{\bB}_{mid} - \widehat{\bB}_{\bar{\bx}(\tau_{n})} \right\|
	\lesssim \eps^2 \left\| \bx_{mid} - \bar{\bx}(\tau_{n}) \right\|
	= \eps^2 \left\|  \bx(\tau_{n} + \frac{\mathfrak{h}}{2}) - \bar{\bx}(\tau_{n})  \right\|	\notag \\
	&\quad \lesssim \eps^2 \left\| \frac{\eps\mh^2}{8}\dot{\bv}(\tau_{n}) -  \frac{\eps\mh^2}{8} \big( \widehat{\bB}(\eps\bx(\tau_{n}))\bv(\tau_{n}) -i\eps w(\tau_{n}) \bE(\bx(\tau_{n}))  \big)  + \mathcal{O}(\eps \mh^3) \right\| 
	\lesssim \eps^3\mh^3,  \label{bB1} \\
	& \left\| \big( \bE(\widetilde{\bx}^{\, n}(s)) -  \bE(\bar{\bx}(\tau_{n})) \big) \right\| 
	\lesssim \left\| \widetilde{\bx}^{\, n}(s) - \bar{\bx}(\tau_{n}) \right\| = \left\| \bx(\tau_{n}+s) - \zeta_{\bx}^{n}(s) - \bar{\bx}(\tau_{n}) \right\|   
    \lesssim \left\| \bx(\tau_{n}+s) - \bar{\bx}(\tau_{n}) \right\|
	+ \left\| \zeta_{\bx}^{n}(s) \right\| 
	\lesssim \eps\mh,  \notag \\
	& \left\| \bE(\bar{\bx}(\tau_{n})) - \bE\big(\bx(\tau_{n} + \frac{\mathfrak{h}}{2})\big) \right\| 
	\lesssim \left\| \bar{\bx}(\tau_{n}) -\bx(\tau_{n} + \frac{\mathfrak{h}}{2}) \right\|
	\lesssim \eps \mh^3. \notag
\end{align}
Substituting all the above bounds into \eqref{xiu2}, we obtain the final local error bound
\begin{equation*}
	\left\| \xi_{\bu}^{n} \right\| \lesssim \eps^2\mh^3,
	\quad 0 \leq n < \frac{T}{\varepsilon \mathfrak{h}}. 
\end{equation*}

$\bullet$   \textbf{The estimation of $\xi_{\by}^{n}$.} 
Substituting the refined expressions of $\widetilde{\bv}^{\, n}(s)$ (see \eqref{bv}) and $\widetilde{w}^{\, n}(s)$ (see \eqref{bw}) into  \eqref{bxa}-\eqref{btb}, we get the updated approximate solutions 
\begin{align}
	\widetilde{\bx}^{\, n}(\mh) & = \bx(\tau_{n}) + \eps \int_{0}^{\mh} \mathrm{e}^{s \widehat{\bB}_{mid}} \bv(\tau_{n}) \, ds 
	- i\eps^2 \int_{0}^{\mh} s \int_{0}^{1} \mathrm{e}^{\rho s \widehat{\bB}_{mid}} w(\tau_{n}) \bE\Big(\bx(\tau_{n} + \frac{s}{2}) \Big) \, d\rho \, ds + \mathcal{O}(\eps^3 \mh^3),  \label{bx} \\
	{\widetilde{\bar{t}}}^{\,\, n}(\mh)  & = \bar{t}(\tau_{n}) + \eps \int_{0}^{\mh} w(\tau_{n}) \, ds 
	+ i\eps^2 \int_{0}^{\mh} s \int^{1}_{0}
	\bE\Big(\bx(\tau_{n} + \frac{s}{2}) \Big)^{\intercal} \cdot \big( \mathrm{e}^{(1-\rho)s \widehat{\bB}_{mid}} \bv(\tau_{n}) \big) \, d\rho \, ds + \mathcal{O}(\eps^3 \mh^3).  \label{bt}
\end{align}
Subtracting \eqref{bby} from the combination of \eqref{bx} and \eqref{bt}, we arrive at the explicit representation for the local truncation error $\xi_{\by}^{n}$:
\begin{align}
	\xi_{\by}^{n} = & 
	\begin{pmatrix}
		\displaystyle 
		\eps \int_{0}^{\mh} \mathrm{e}^{s \widehat{\bB}_{mid}} \bv(\tau_{n}) \, ds 
		- i\eps^2 \int_{0}^{\mh} s \int_{0}^{1} \mathrm{e}^{\rho s \widehat{\bB}_{mid}} w(\tau_{n}) \bE\Big(\bx(\tau_{n} + \frac{s}{2}) \Big) \, d\rho \, ds  \\[8pt]
		\displaystyle
		\eps \int_{0}^{\mh} w(\tau_{n}) \, ds
		+ i\eps^2 \int_{0}^{\mh} s \int^{1}_{0}
		\bE\Big(\bx(\tau_{n} + \frac{s}{2}) \Big)^{\intercal} \cdot \big( \mathrm{e}^{(1-\rho)s \widehat{\bB}_{mid}} \bv(\tau_{n}) \big) \, d\rho \, ds
	\end{pmatrix}  \notag \\
	& \displaystyle  - \frac{\varepsilon \mh}{2}\varphi_{1}\biggl( \frac{\mh}{2} \bK_{\bx(\tau_{n})} \biggr)
	\Bigl(I + \mathrm{e}^{\mh (\bK_{\bar{\bx}(\tau_{n})} - \bK_{\bx(\tau_{n})}) }
	\mathrm{e}^{\frac{\mh}{2}\bK_{\bx(\tau_{n})}}\Bigr) \bu(\tau_{n})
	+ \mathcal{O}(\eps^3 \mh^3).   \label{xiy}
\end{align}
In a similar fashion, we take the norm on both sides of the error identity and expand the matrix exponential to deduce
\begin{align}
	\left\| \xi_{\by}^{n} \right\| \lesssim &
	\left\|
	\begin{pmatrix}
		\begin{aligned}
			\displaystyle
			&\hspace{4mm} \Big[ \eps\mh^2 \big( \widehat{\bB}_{mid} - \widehat{\bB}_{\bar{\bx}(\tau_{n})} \big) 
			+ \eps\mh^3 \big( \widehat{\bB}^{2}_{mid} - \widehat{\bB}^{2}_{\bx(\tau_{n})} \big) 
			+ \eps\mh^3 \big( \widehat{\bB}_{\bar{\bx}(\tau_{n})} - \widehat{\bB}_{\bx(\tau_{n})} \big) \widehat{\bB}_{\bar{\bx}(\tau_{n})} \\
			\displaystyle
			&\hspace{2mm} + \eps\mh^3 \widehat{\bB}_{\bx(\tau_{n})} \big( \widehat{\bB}_{\bar{\bx}(\tau_{n})} - \widehat{\bB}_{\bx(\tau_{n})} \big) \Big] \bv(\tau_{n}) 
			+ \Big[ i\eps^2\mh^2 \big( \bE(\bar{\bx}(\tau_{n})) - \bE\big(\bx(\tau_{n} + \frac{s}{2})\big) \big) \\
			\displaystyle
			& + i\eps^2\mh^3 \big( \widehat{\bB}_{\bx(\tau_{n})} \bE(\bx(\tau_{n})) - \widehat{\bB}_{mid} \bE\big(\bx(\tau_{n} + \frac{s}{2})\big) \big)  
			+ i\eps^2\mh^3 \big( \widehat{\bB}_{\bar{\bx}(\tau_{n})} - \widehat{\bB}_{\bx(\tau_{n})} \big) \bE(\bar{\bx}(\tau_{n}))  \\
			\displaystyle
			&\hspace{18mm} + i\eps^2\mh^3 \widehat{\bB}_{\bx(\tau_{n})} \big( \bE(\bar{\bx}(\tau_{n})) - \bE(\bx(\tau_{n})) \big)
			\Big] w(\tau_{n}) 
		\end{aligned}\\[39pt]
		\begin{aligned}
			\displaystyle
			& \Big[ i\eps^2\mh^2 \big( \bE\big(\bx(\tau_{n} + \frac{s}{2}) \big)^{\intercal} - \bE(\bar{\bx}(\tau_{n}))^{\intercal} \big) 
			+ i\eps^2\mh^3 \big( \bE\big(\bx(\tau_{n} + \frac{s}{2}) \big)^{\intercal} \widehat{\bB}_{mid} - \bE(\bx(\tau_{n}))^{\intercal} \widehat{\bB}_{\bx(\tau_{n})} \big) \\ \displaystyle
			&\hspace{2mm} + i\eps^2\mh^3 \big( \bE(\bar{\bx}(\tau_{n}))^{\intercal} - \bE(\bx(\tau_{n}))^{\intercal} \big) \widehat{\bB}_{\bar{\bx}(\tau_{n})}  
			+ i\eps^2\mh^3 \bE(\bx(\tau_{n}))^{\intercal} \big( \widehat{\bB}_{\bar{\bx}(\tau_{n})} - \widehat{\bB}_{\bx(\tau_{n})} \big)
			\Big] \bv(\tau_{n})
		\end{aligned} 
	\end{pmatrix}  
	\right\| + \mathcal{O}(\eps^3 \mh^3) \notag \\
	\lesssim & 
	\left\|
	\begin{pmatrix}
		\begin{aligned}
			\displaystyle
			& \eps\mh^2 \left\| \widehat{\bB}_{mid} - \widehat{\bB}_{\bar{\bx}(\tau_{n})} \right\| 
			+ \eps\mh^3 \left\| \big( \widehat{\bB}_{mid} - \widehat{\bB}_{\bx(\tau_{n})} \big) \widehat{\bB}_{mid} \right\| 
			+ \eps\mh^3 \left\| \widehat{\bB}_{\bx(\tau_{n})} \big( \widehat{\bB}_{mid} - \widehat{\bB}_{\bx(\tau_{n})} \big) \right\|  \\[1pt]
			\displaystyle
			& + \eps\mh^3  \left\| \big( \widehat{\bB}_{\bar{\bx}(\tau_{n})} - \widehat{\bB}_{\bx(\tau_{n})} \big) \widehat{\bB}_{\bar{\bx}(\tau_{n})} \right\| 
			+ \eps\mh^3 \left\| \widehat{\bB}_{\bx(\tau_{n})} \big( \widehat{\bB}_{\bar{\bx}(\tau_{n})} - \widehat{\bB}_{\bx(\tau_{n})} \big) \right\|  
			+ \eps^2\mh^2 \left\|  \bE(\bar{\bx}(\tau_{n})) - \bE\big(\bx(\tau_{n} + \frac{s}{2})\big)  \right\| \\
			\displaystyle
			&\hspace{4mm}  + \eps^2\mh^3 \left\| \widehat{\bB}_{\bx(\tau_{n})} \big( \bE(\bx(\tau_{n})) - \bE\big(\bx(\tau_{n} + \frac{s}{2})\big) \big) \right\| 
			+ \eps^2\mh^3 \left\| \big( \widehat{\bB}_{\bx(\tau_{n})}  - \widehat{\bB}_{mid} \big) \bE\big(\bx(\tau_{n} + \frac{s}{2})\big)  \right\|  \\[1pt]
			\displaystyle
			&\hspace{8mm}  + \eps^2\mh^3 \left\| \big( \widehat{\bB}_{\bar{\bx}(\tau_{n})} - \widehat{\bB}_{\bx(\tau_{n})} \big) \bE(\bar{\bx}(\tau_{n})) \right\|
			+ \eps^2\mh^3 \left\| \widehat{\bB}_{\bx(\tau_{n})} \big( \bE(\bar{\bx}(\tau_{n})) - \bE(\bx(\tau_{n})) \big) \right\| 
		\end{aligned}\\[42pt]
		\begin{aligned}
			\displaystyle
			&\hspace{6mm} \eps^2\mh^2 \left\| \bE\big(\bx(\tau_{n} + \frac{s}{2}) \big)^{\intercal} - \bE(\bar{\bx}(\tau_{n}))^{\intercal} \right\| + \eps^2\mh^3 \left\| \bE\big(\bx(\tau_{n} + \frac{s}{2}) \big)^{\intercal} \big( \widehat{\bB}_{mid} - \widehat{\bB}_{\bx(\tau_{n})} \big) \right\| \\[1pt]
			\displaystyle
			& + \eps^2\mh^3 \left\| \big( \bE\big(\bx(\tau_{n} + \frac{s}{2}) \big)^{\intercal} - \bE(\bx(\tau_{n}))^{\intercal} \big) \widehat{\bB}_{\bx(\tau_{n})} \right\| 
			+ \eps^2\mh^3 \left\| \big( \bE(\bar{\bx}(\tau_{n}))^{\intercal} - \bE(\bx(\tau_{n}))^{\intercal} \big) \widehat{\bB}_{\bar{\bx}(\tau_{n})} \right\| \\
			\displaystyle
			&\hspace{20mm} + \eps^2\mh^3 \left\| \bE(\bx(\tau_{n}))^{\intercal} \big( \widehat{\bB}_{\bar{\bx}(\tau_{n})} - \widehat{\bB}_{\bx(\tau_{n})} \big) \right\|
		\end{aligned}
	\end{pmatrix} 
	\right\| + \mathcal{O}(\eps^3 \mh^3).  \label{xiy2}
\end{align}
We next collect the required preliminary bounds:
\begin{equation*}
	\left\| \widehat{\bB}_{mid} - \widehat{\bB}_{\bx(\tau_{n})} \right\| 
	\lesssim \eps^2 \left\| \bx_{mid} - \bx(\tau_{n}) \right\| 
	\lesssim \eps^3 \mh, 
    \quad
	\left\|  \widehat{\bB}_{\bar{\bx}(\tau_{n})} - \widehat{\bB}_{\bx(\tau_{n})}  \right\| \lesssim \eps^2 \left\| \bar{\bx}(\tau_{n}) - \bx(\tau_{n}) \right\|
	\lesssim \eps^3 \mh, 
\end{equation*}
\begin{equation*}
	\left\|  \bE(\bar{\bx}(\tau_{n})) - \bE\big(\bx(\tau_{n} + \frac{s}{2})\big)  \right\| 
	\lesssim \left\| \bar{\bx}(\tau_{n}) - \bx(\tau_{n} + \frac{s}{2}) \right\|  
	\lesssim \eps \mh,
    \quad
	\left\| \bE\big(\bx(\tau_{n} + \frac{s}{2})\big) - \bE(\bx(\tau_{n})) \right\|
	\lesssim \left\| \bx(\tau_{n} + \frac{s}{2}) - \bx(\tau_{n}) \right\|
	  \lesssim \eps \mh,
\end{equation*}
\begin{equation*}
	\left\| \bE(\bar{\bx}(\tau_{n})) - \bE(\bx(\tau_{n})) \right\|  \lesssim
	\left\| \bar{\bx}(\tau_{n}) - \bx(\tau_{n}) \right\| \lesssim \eps \mh.
\end{equation*}
Combining these estimates with \eqref{bB1} and substituting all bounds into \eqref{xiy2}, we obtain the final local truncation error bound 
\begin{equation*}
	\left\| \xi_{\by}^{n} \right\| \lesssim \eps^3\mh^3,
	\quad 0 \leq n < \frac{T}{\varepsilon \mathfrak{h}}. 
\end{equation*}	
The proof of Lemma \ref{lemmalocal} is complete. 
\end{proof}

\begin{proof}[\textbf{Proof of Lemma \ref{global}}]
	The assertion trivially holds for $n=0$ owing to the initial conditions. We proceed to prove the lemma by mathematical induction. Assume that the error estimates and boundedness of numerical solutions are valid for all $n \leq m< \frac{T}{\varepsilon \mathfrak{h}}$. It suffices to verify the desired conclusions for $n=m+1$ to complete the induction argument.
	
	By subtracting the discrete SS2-xn scheme \eqref{SS2s} from \eqref{bby}-\eqref{bbu} and incorporating the auxiliary identity \eqref{eyu}, we obtain the following recursive error equations
	\begin{subequations}
		\begin{align}
			e_{\by}^{n+1} & = e_{\by}^n + \frac{\varepsilon \mh}{2}\varphi_{1}\biggl( \frac{\mh}{2} \bK_{\bx(\tau_{n})} \biggr)
			\Bigl(I_4 + \mathrm{e}^{\mh (\bK_{\bar{\bx}(\tau_{n})} - \bK_{\bx(\tau_{n})}) }
			\mathrm{e}^{\frac{\mh}{2}\bK_{\bx(\tau_{n})}} \Bigr)  e_{\bu}^n 
			+ \xi_{\by}^n + \zeta_{\by}^n(\mathfrak{h}) + \eta_{\by}^n, \label{eya} \\
			e_{\bu}^{n+1} & = \mathrm{e}^{\frac{\mh}{2}\bK_{\bx(\tau_{n})}}
			\mathrm{e}^{\mh (\bK_{\bar{\bx}(\tau_{n})} - \bK_{\bx(\tau_{n})})}
			\mathrm{e}^{\frac{\mh}{2}\bK_{\bx(\tau_{n})}} e_{\bu}^n
			+ \xi_{\bu}^n + \zeta_{\bu}^n(\mathfrak{h}) + \eta_{\bu}^n, \quad 0 \leq n \leq m. \label{eub}
		\end{align}
	\end{subequations}
	The additional perturbation terms $\eta_{\by}^n$ and $\eta_{\bu}^n$ involved in the above error evolution formulas are explicitly defined as
	\begin{align*}
		\eta_{\by}^{n} = &\, \frac{\varepsilon \mh}{2}\varphi_{1} \biggl( \frac{\mh}{2} \bK_{\bx(\tau_{n})} \biggr) 
		\Big( I_4 + \mathrm{e}^{\mh (\bK_{\bar{\bx}(\tau_{n})} - \bK_{\bx(\tau_{n})}) }
		\mathrm{e}^{\frac{\mh}{2}\bK_{\bx(\tau_{n})}} \Big) \bu^{n}   
		- \frac{\varepsilon \mh}{2}\varphi_{1} \biggl( \frac{\mh}{2} \bK^{n} \biggr)  \Big( I_4 + \mathrm{e}^{\mh (\bK_{\bar{\bx}^{n}} - \bK^{n}) }
		\mathrm{e}^{\frac{\mh}{2}\bK^n} \Big)  \bu^{n}, \\
		\eta_{\bu}^{n} = &\, \Big( \mathrm{e}^{\frac{\mh}{2}\bK_{\bx(\tau_{n})}}
		\mathrm{e}^{\mh (\bK_{\bar{\bx}(\tau_{n})} - \bK_{\bx(\tau_{n})})}
		\mathrm{e}^{\frac{\mh}{2}\bK_{\bx(\tau_{n})}} 
		- \mathrm{e}^{\frac{\mh}{2}\bK^n}
		\mathrm{e}^{\mh (\bK_{\bar{\bx}^{n}} - \bK^{n})}
		\mathrm{e}^{\frac{\mh}{2}\bK^n} \Big) \bu^{n}.
	\end{align*}
	
	We first derive the upper bounds for the two foregoing perturbation terms. Thanks to the available estimates of 
	\begin{equation*}
		\left\|  \widehat{\bB}(\eps \bar{\bx}(\tau_n)) - \widehat{\bB}(\eps \bar{\bx}^n)  \right\|  \lesssim \eps^2 \left\| \bar{\bx}(\tau_n) - \bar{\bx}^n \right\| \lesssim \eps^2 \big( \left\| e_{\bx}^{n} \right\| + \eps\mh \left\| e_{\bv}^{n} \right\| \big)
		\lesssim \eps^2 \big( \left\| e_{\by}^{n} \right\| + \eps\mh \left\| e_{\bu}^{n} \right\| \big),
	\end{equation*}
	\begin{equation*}
		\left\| \bE(\bar{\bx}(\tau_n)) - \bE(\bar{\bx}^n)  \right\|  \lesssim 
		\left\| \bar{\bx}(\tau_n) - \bar{\bx}^n \right\| \lesssim  \left\| e_{\bx}^{n} \right\| + \eps\mh \left\| e_{\bv}^{n} \right\| 
		\lesssim \left\| e_{\by}^{n} \right\| + \eps\mh \left\| e_{\bu}^{n} \right\|,
	\end{equation*}
	the desired bounds $\eta_{\by}^n$ and $\eta_{\bu}^n$ can be consequently established as
	\begin{align}
		\left\| \eta_{\by}^n \right\| &\lesssim \eps\mh \cdot \mh \left\| \bK_{\bar{\bx}(\tau_{n})} - \bK_{\bar{\bx}^{n}} \right\|  
		\lesssim \eps\mh^2 
		\left\| 
		\begin{pmatrix}
			\widehat{\bB}(\eps \bar{\bx}(\tau_n)) - \widehat{\bB}(\eps \bar{\bx}^n) & -i\eps \big( \bE(\bar{\bx}(\tau_n)) - \bE(\bar{\bx}^n) \big) \\[2pt]
			i\eps \big( \bE(\bar{\bx}(\tau_n))^{\intercal} - \bE(\bar{\bx}^n)^{\intercal} \big) & 0
		\end{pmatrix} 
		\right\| \notag \\
		&\lesssim \eps\mh^2 
		\left\| 
		\begin{pmatrix}
			\left\| \widehat{\bB}(\eps \bar{\bx}(\tau_n)) - \widehat{\bB}(\eps \bar{\bx}^n) \right\| & \eps \left\| \bE(\bar{\bx}(\tau_n)) - \bE(\bar{\bx}^n) \right\| \\[4pt]
			\eps \left\| \bE(\bar{\bx}(\tau_n))^{\intercal} - \bE(\bar{\bx}^n)^{\intercal} \right\| & 0
		\end{pmatrix}  
		\right\|  
		\lesssim \eps^2\mh^2  \big( \left\| e_{\by}^{n} \right\| + \eps\mh \left\| e_{\bu}^{n} \right\| \big),  \label{etaby} \\
		\left\| \eta_{\bu}^n \right\| &\lesssim \mh \left\| \bK_{\bar{\bx}(\tau_{n})} - \bK_{\bar{\bx}^{n}} \right\|
		\lesssim \eps\mh \big( \left\| e_{\by}^{n} \right\| + \eps\mh \left\| e_{\bu}^{n} \right\| \big).  \label{etabu}
	\end{align}
	We proceed to the global error analysis. Taking the norm on both sides of the error recurrence relations \eqref{eya}-\eqref{eub} and exploiting the orthogonality property of matrix $\mathrm{e}^{\mh\bK}$, we arrive at the estimates 
	\begin{align*}
		\left\| e_{\by}^{n+1} \right\| & \lesssim \left\| e_{\by}^{n} \right\| + \varepsilon{\mathfrak{h}} \left\| e_{\bu}^{n} \right\| + \left\| \xi_{\by}^{n} \right\| + \left\| \zeta_{\by}^{n}({\mathfrak{h}}) \right\| + \left\|\eta_{\by}^{n} \right\|,   \\
		\left\| e_{\bu}^{n+1} \right\| & \lesssim \left\| e_{\bu}^{n} \right\|
		+ \left\| \xi_{\bu}^{n} \right\| + \left\| \zeta_{\bu}^{n}({\mathfrak{h}}) \right\| + \left\|\eta_{\bu}^{n} \right\|,
		\quad 0\leq n\leq m.
	\end{align*} 
	Adding up the above two estimates and inserting \eqref{etaby}-\eqref{etabu}, one can derive the desired estimate 
	\begin{equation*}
		\left\| e_{\by}^{n+1} \right\| + \left\| e_{\bu}^{n+1} \right\| - \left\| e_{\by}^{n} \right\| - \left\| e_{\bu}^{n} \right\|
		\lesssim \varepsilon \mathfrak{h} \big( \left\| e_{\by}^{n} \right\| + \left\| e_{\bu}^{n} \right\| \big) + \left\| \xi_{\by}^{n} \right\| + \left\| \xi_{\bu}^{n} \right\| + \left\| \zeta_{\by}^{n}(\mathfrak{h}) \right\| + \left\| \zeta_{\bu}^{n}(\mathfrak{h}) \right\|.
	\end{equation*}
	We sum the recursive error relations from step \(n=0\) to \(n=m\). Since no error occurs at the initial time level, i.e., \(e_{\by}^{0} = e_{\bu}^{0} = 0\), we arrive at
	\begin{equation*}
		\left\| e_{\by}^{m+1} \right\| + \left\| e_{\bu}^{m+1} \right\| \lesssim \varepsilon \mathfrak{h} \sum_{n=0}^{m} \big( \left\| e_{\by}^{n} \right\| + \left\| e_{\bu}^{n} \right\| \big)
		+ \sum_{n=0}^{m} \big( \left\| \xi_{\by}^{n} \right\| + \left\| \xi_{\bu}^{n} \right\| + \left\| \zeta_{\by}^{n}(\mathfrak{h}) \right\|
		+ \left\| \zeta_{\bu}^{n}(\mathfrak{h}) \right\| \big). 
	\end{equation*}
	Substituting the bounds \eqref{M} and \eqref{local}, and combining with the fact that \(m\varepsilon{\mathfrak{h}} < T \lesssim 1\), the above inequality can be simplified as
	\begin{equation*}
		\left\| e_{\by}^{m+1} \right\| + \left\| e_{\bu}^{m+1} \right\| \lesssim \varepsilon \mathfrak{h} \sum_{n=0}^{m} \big( \left\| e_{\by}^{n} \right\| + \left\| e_{\bu}^{n} \right\| \big) + \varepsilon {\mathfrak{h}}^{2}, 
		\quad 0\leq m < \frac{T}{\varepsilon{\mathfrak{h}}}. 
	\end{equation*}
	At this stage, we apply the Gronwall inequality to obtain the final global error bound
	\begin{equation*}
		\left\| e_{\by}^{m+1} \right\| + \left\| e_{\bu}^{m+1} \right\| \lesssim \varepsilon \mathfrak{h}^{2}, 
		\quad 0\leq m < \frac{T}{\varepsilon{\mathfrak{h}}}.
	\end{equation*}
	Based on the error decomposition, we further evaluate the boundedness of numerical solutions. It follows that
	$
	\left\| \by^{m+1} \right\| \leq \left\| \by(\tau_{m+1}) \right\| + \left\| e_{\by}^{m+1} \right\| 
	\lesssim \left\| \by(\tau_{m+1}) \right\| + 1, 
    $
    $
	\left\| \bu^{m+1} \right\| \leq \left\| \bu(\tau_{m+1}) \right\| + \left\| e_{\bu}^{m+1} \right\| \lesssim 
	\left\| \bu(\tau_{m+1}) \right\| + 1.
	$
	As a result, there exists a constant $\mathfrak{h}_{0}>0$, independent of $\varepsilon$ and $m$, such that whenever $0<\mathfrak{h}\le\mathfrak{h}_{0}$, the statements of Lemma \ref{global} hold for $m+1$. By mathematical induction, the Lemma is valid for all relevant indices.
\end{proof}

\section*{Acknowledgements}

\begin{itemize}
	\item \textbf{Funding}: This work was supported partially by the National Natural Science Foundation of China (Grants No. 12371403, 92470119) and Shaanxi Fundamental Science Research Project for Mathematics and Physics (Grant No. 25JSY046).

	\item \textbf{Competing interests}: We declare that we have no conflict of interest.
	%	\item \textbf{Ethics approval and consent to participate}: Not applicable.
	%	\item \textbf{Consent for publication}: Not applicable.
	%	\item \textbf{Data availability}: Not applicable.
	%	\item \textbf{Materials availability}: Not applicable.
	%	\item \textbf{Code availability}: Not applicable.
	%	\item \textbf{Author contribution}: Not applicable.
\end{itemize}

\bibliographystyle{elsarticle-num}
\bibliography{tinging}%

\end{document}